\newtheorem{thm}[equation]{Theorem}
\newtheorem{lem}[equation]{Lemma}
\newtheorem{prop}[equation]{Proposition}
\newtheorem{defi}[equation]{Definition}
\newtheorem{rem}[equation]{Remark}
\numberwithin{equation}{section}
\newcommand{\R}{\mathbb{R}}
\newcommand{\N}{\mathbb{N}}
\newdimen\AAdi%
\newbox\AAbo%
\def\AAk#1#2{\setbox\AAbo=\hbox{#2}\AAdi=\wd\AAbo\kern#1\AAdi{}}%
\newcommand{\ent}{\operatorname{Ent}}
\begin{document}

\title[Hamilton Jacobi equations on metric spaces]{Hamilton Jacobi equations on metric spaces and transport entropy inequalities}

\author{N. Gozlan, C. Roberto, P-M. Samson}

\date{\today}

\address{Universit\'e Paris Est Marne la Vall\'ee - Laboratoire d'Analyse et de Math\'e\-matiques Appliqu\'ees (UMR CNRS 8050), 5 bd Descartes, 77454 Marne la Vall\'ee Cedex 2, France}

\address{Universit\'e Paris Ouest Nanterre la D\'efense, MODAL'X, EA 3454, 200 avenue de la R\'epublique 92000 Nanterre, France}

\email{nathael.gozlan@univ-mlv.fr, \hspace{-0,25cm} croberto@math.cnrs.fr, \hspace{-0,25cm} paul-marie.samson@univ-mlv.fr}

\thanks{The authors were partially supported by the ``Agence Nationale de la Recherche'' through the grants ANR 2011 BS01 007 01 and  ANR 10 LABX-58; the second author was partially supported by the European Research Council through the ``Advanced Grant'' PTRELSS 228032.}

\keywords{Transport inequalities, Hamilton-Jacobi equations, logarithmic-Sobolev inequalities, metric spaces}
\subjclass{60E15, 32F32 and 26D10}

\begin{abstract}
We prove an Hopf-Lax-Oleinik formula for the solutions of some Hamilton-Jacobi equations on a general metric space.
As a first consequence, we show in full generality that the log-Sobolev inequality is equivalent to an hypercontractivity property of the Hamilton-Jacobi semi-group.  
As a second consequence, we prove that Talagrand's transport-entropy inequalities in metric space are characterized in terms of log-Sobolev inequalities restricted to the class of $c$-convex functions.
\end{abstract}

\maketitle


\section{Introduction}
Let $L:\R^m\to \R$ be a convex function with super linear growth, in the sense that $L(h)/\|h\|\to \infty$, when $\|h\|\to \infty$, where $\|\cdot\|$ is any norm on $\R^m$. It is well known that if $f$ is some Lipschitz function on $\R^m$, the function $Q_tf$ defined by 
\begin{equation}\label{HLO}
Q_tf(x)=\inf_{y\in \R^m}\left\{f(y)+tL((x-y)/t)\right\},\quad t\geq 0, x\in \R^m,
\end{equation}
is a solution, in different weak senses, of the following Hamilton-Jacobi equation  
\begin{equation}\label{HJ classical}
\partial_t u(t,x)=-L^*(\partial_{x}u(t,x))
\end{equation}
with initial condition $u(0,x)=f(x)$, where $L^*(v)=\sup_{u\in \R^m}\{u\cdot v -L(u)\}$ is the Fenchel-Legendre transform of $L$ (see for instance \cite{Evans-book}). It can be shown, for example, that the function $(t,x) \mapsto Q_tf(x)$ is almost everywhere differentiable in $(0,\infty)\times \R^m$ and that \eqref{HJ classical} is verified at every such point of differentiability (see e.g \cite[Chapter 3]{Evans-book}). Formula \eqref{HLO} is usually referred to as the Hopf-Lax-Oleinik formula for Hamilton-Jacobi equations.

\pagebreak
The objective of this paper is twofold: 
\begin{itemize}
\item[(i)] generalize the Hopf-Lax-Oleinik (HLO) formula to a class of Hamilton-Jacobi equations in a metric space framework;
\item[(ii)] use this aforementioned HLO formula to establish different connections between logarithmic Sobolev type inequalities and transport-entropy inequalities.
\end{itemize}

\subsection{General framework.} 

In this section we give the general setting of this article.

\subsubsection{Assumptions on the space} In all the paper, $(X,d)$ will be a complete and separable metric space in which closed balls are compact. This latter assumption could be removed at the expense of additional standard technicalities. We will sometimes assume that $(X,d)$ is a \emph{geodesic space}, meaning that for every two points $x,y \in X$ there is at least one curve $(\gamma_t)_{t\in [0,1]}$ with $\gamma_0=x$, $\gamma_1=y$ and such that $d(\gamma_s,\gamma_t)=|t-s|d(x,y)$ for all $s,t \in [0,1].$ Such a curve is called a geodesic between $x$ and $y$.
 
\subsubsection{The sup and inf convolution ``semigroups".}In all the paper, $\alpha:\R^+\to\R^+$ will be an increasing convex function of class $\mathcal{C}^1$ such that $\alpha(0)=0.$ If $f:X\to\R$ is a bounded function, we define for all $t>0$ the functions $P_{t}f$ and $Q_{t}f$ as follows:
\begin{equation}\label{Pt}
P_{t}f(x)=\sup_{y\in X}\left\{f(y)-t\alpha\left(\frac{d(x,y)}{t}\right)\right\},\qquad \forall x\in X,
\end{equation}
and
\begin{equation}\label{Qt}
Q_{t}f(x)=\inf_{y\in X}\left\{f(y)+t\alpha\left(\frac{d(x,y)}{t}\right)\right\},\qquad \forall x\in X.
\end{equation}
The operators $P_{t}$ and $Q_{t}$ are connected by the following simple relation
$$Q_{t}f= -P_{t}(-f).$$
When the space $(X,d)$ is geodesic, the families of operators $\{Q_t\}_{t>0}$ and $\{P_t\}_{t>0}$ form non-linear semigroups acting on bounded functions: 
$$Q_{t+s}f=Q_t\left(Q_sf\right)\qquad \text{and}\qquad P_{t+s}f=P_t\left(P_s f\right),\qquad \forall t,s>0,$$
for all bounded function $f:X\to\R.$ When $(X,d)$ is not geodesic, only half of this property is preserved:
$$Q_{t+s}f\leq Q_t\left(Q_sf\right)\qquad \text{and}\qquad P_{t+s}f\geq P_t\left(P_s f\right),\qquad \forall t,s>0.$$

Now we present our main results.

\subsection{An Hopf-Lax-Oleinik formula on a metric space.}
Our objective is to show that the Hamilton-Jacobi equation \eqref{HJ classical} is still verified by $Q_tf$ in the metric space framework introduced above. To that purpose we first need to give a meaning to the state space partial derivative $\partial_x$ in this context. 

We will adopt the following classical measurements $|\nabla^+ f|(x)$ and $|\nabla ^- f|(x)$ of the local slope of a function $f:X\to\R$ around $x\in X$ defined by
\begin{equation}\label{nabla pm}
|\nabla^+ f|(x)=\limsup_{y\to x} \frac{[f(y)-f(x)]_+}{d(x,y)}, \quad |\nabla^- f|(x)=\limsup_{y\to x} \frac{[f(y)-f(x)]_-}{d(x,y)},
\end{equation}
(by convention, we set $|\nabla^\pm f|(x)=0$, if $x$ is an isolated point in $X$). 

If $f$ is locally Lipschitz, then $|\nabla^{\pm} f|(x)$ are finite for every $x\in X$. Moreover, if $f$ is Lipschitz continuous with Lipschitz constant denoted by $\mathrm{Lip}(f)$, then $|\nabla^{\pm}f|(x)\leq \mathrm{Lip}(f)$ for all $x\in X.$ Finally, when $X$ is a Riemannian manifold and $f$ is differentiable at $x$, it is not difficult to check that $|\nabla^\pm f|(x)$ is equal to the norm of the vector $\nabla f(x) \in T_x X$ (the tangent space at $x$).

One of our main result is the following theorem.

\begin{thm}\label{HJ}
If $f:X\to \R$ is an upper semicontinuous function bounded from above, then the following Hamilton-Jacobi differential inequalities hold
\begin{equation}\label{eq HJ}
\frac{d}{dt_+} P_tf(x) \geq \alpha^*\left( |\nabla ^+ P_tf|(x)\right)\qquad \forall t>0,\quad \forall x\in X,
\end{equation}
and 
$$\frac{d}{dt_-} P_tf(x) \geq \alpha^*\left( |\nabla ^- P_tf|(x)\right)\qquad \forall t>0,\quad \forall x\in X,
$$
where $\alpha^*(u)=\sup_{h\geq 0}\left\{ hu-\alpha(h)\right\}$, $u\geq 0$, and where $d/dt_+$ and $d/dt_-$ denote respectively the right and left time derivatives.\\
Moreover, when the space $(X,d)$ is geodesic, it holds
\begin{equation}\label{eq HJ bis}
\frac{d}{dt_+} P_tf(x) = \alpha^*\left( |\nabla ^+ P_tf|(x)\right)\qquad \forall t>0,\quad \forall x\in X.\end{equation}
\end{thm}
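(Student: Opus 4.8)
\emph{Proof proposal.} The plan is to reduce Theorem~\ref{HJ} to two facts: a Danskin-type envelope formula for the one-sided time derivatives of $s\mapsto P_sf(x)$, and a comparison between the local slope of $P_tf$ at $x$ and the ``speed'' $d(x,y^\ast)/t$ of an optimal point $y^\ast$. Fix $x\in X$ and $t>0$, and set $g(s,y):=f(y)-s\,\alpha(d(x,y)/s)$, so that $P_sf(x)=\sup_{y\in X} g(s,y)$. Several preliminaries are needed. Convexity of $\alpha$ with $\alpha(0)=0$ gives $\alpha(h)\le h\alpha'(h)$, so $s\mapsto s\alpha(r/s)$ is non-increasing for each $r\ge0$, and hence $s\mapsto P_sf(x)$ is non-decreasing. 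Since $\alpha$ is coercive, for each $c\le\sup f$ the set $\{y:g(s,y)\ge c\}$ lies in a closed, hence compact, ball; being closed by upper semicontinuity of $g(s,\cdot)$, it is nonempty when $c=P_sf(x)$, so the supremum is attained, and I write $M_s=M_s(x)$ for the (compact, nonempty) set of maximizers. A routine compactness argument then shows that near-maximizers stay in a fixed compact ball over compact $s$-intervals, that $s\mapsto M_s$ has closed graph (subsequential limits of maximizers are maximizers), and that $s\mapsto P_sf(x)$ is locally Lipschitz on $(0,\infty)$.

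For the envelope formula, observe that $s\mapsto g(s,y)$ is $C^1$ with $\partial_s g(s,y)=\psi(d(x,y)/s)$, where $\psi(u):=u\alpha'(u)-\alpha(u)\ge0$ is non-decreasing and satisfies $\psi(u)=\alpha^*(\alpha'(u))$ (the map $h\mapsto h\alpha'(u)-\alpha(h)$ is maximized at $h=u$). Bounding $P_{t+h}f(x)$ from below by $g(t+h,y)$ for $y\in M_t$ and from above by $g(t,y_h)$ for a maximizer $y_h\in M_{t+h}$, dividing by $h>0$ and letting $h\to0^+$ — using the uniform bounds to control $\tfrac1h\int_t^{t+h}\psi(d(x,y_h)/\tau)\,d\tau$ and the closed-graph property to identify subsequential limits of $y_h$ with maximizers at time $t$ — one obtains that the right derivative exists and equals $\psi(R_t)$, with $R_t:=\tfrac1t\sup_{y\in M_t}d(x,y)$; the symmetric argument as $h\to0^-$ gives existence of the left derivative, equal to $\psi(r_t)$ with $r_t:=\tfrac1t\inf_{y\in M_t}d(x,y)$.

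It remains to compare slopes. For $z$ near $x$ choose $w_z\in M_t(z)$; then $P_tf(z)-P_tf(x)\le t\big[\alpha(d(x,w_z)/t)-\alpha(d(z,w_z)/t)\big]\le\alpha'(d(x,w_z)/t)\,d(x,z)$ by the triangle inequality and monotone convexity of $\alpha$, hence $[P_tf(z)-P_tf(x)]_+/d(x,z)\le\alpha'(d(x,w_z)/t)$; letting $z\to x$ and using that subsequential limits of $w_z$ lie in $M_t(x)$ gives $|\nabla^+P_tf|(x)\le\alpha'(R_t)$, and symmetrically (using a fixed innermost maximizer at $x$) $|\nabla^-P_tf|(x)\le\alpha'(r_t)$. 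Since $\alpha^*$ is non-decreasing on $\R^+$ and $\psi=\alpha^*\circ\alpha'$, we deduce $\alpha^*(|\nabla^+P_tf|(x))\le\psi(R_t)=\tfrac{d}{dt_+}P_tf(x)$ and $\alpha^*(|\nabla^-P_tf|(x))\le\psi(r_t)=\tfrac{d}{dt_-}P_tf(x)$, which are the asserted inequalities. When $(X,d)$ is geodesic I obtain the matching lower bound: for an outermost maximizer $y^\ast$ (so $d(x,y^\ast)=tR_t$) and a geodesic $\gamma$ from $x$ to $y^\ast$, $P_tf(\gamma_s)-P_tf(x)\ge t\big[\alpha(R_t)-\alpha(R_t-s/t)\big]\ge\alpha'(R_t-s/t)\,s$, whence $|\nabla^+P_tf|(x)\ge\lim_{s\to0^+}\alpha'(R_t-s/t)=\alpha'(R_t)$; thus $|\nabla^+P_tf|(x)=\alpha'(R_t)$ and $\tfrac{d}{dt_+}P_tf(x)=\psi(R_t)=\alpha^*(\alpha'(R_t))=\alpha^*(|\nabla^+P_tf|(x))$.

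The step I expect to be the main obstacle is the envelope formula: producing the one-sided time derivatives \emph{everywhere}, with the clean optimal-speed expression, rather than merely almost everywhere from local Lipschitzness, requires the compactness of closed balls, the closed-graph property of $s\mapsto M_s$, and a careful two-sided sandwich — this is exactly where the standing hypothesis on $(X,d)$ enters. A secondary subtlety is that the reverse slope inequality genuinely uses geodesics, and only for $|\nabla^+|$ (there may be no points ``between'' $x$ and an optimal $y^\ast$ when $(X,d)$ is not geodesic), which is why equality is asserted only for the right time derivative.
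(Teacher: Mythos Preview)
Your proof is correct and follows essentially the same approach as the paper: the envelope formula you establish (right and left time derivatives equal $\psi(R_t)$ and $\psi(r_t)$) is exactly the paper's Theorem~\ref{HJ 1}, your slope comparisons $|\nabla^+P_tf|(x)\le\alpha'(R_t)$ and $|\nabla^-P_tf|(x)\le\alpha'(r_t)$ (with equality for $|\nabla^+|$ in the geodesic case) reproduce the paper's Proposition~\ref{Gradients comparisons}, and the two are combined via the same identity $\psi=\alpha^*\circ\alpha'$ (your $\psi$ is their $\beta$). The only differences are organizational---the paper separates the envelope formula and the gradient comparison into standalone results and uses the language of $c$-subdifferentials for the latter---but the ideas, the order of the argument, and the role of the standing compactness hypothesis are identical.
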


The interesting feature of Theorem \ref{HJ} is that there is no measure theory in its formulation: the conclusion holds for \emph{all} $t>0$ and \emph{all} $x\in X$. Theorem \ref{HJ} extends previous results by Lott and Villani \cite{LV07,Villani-book}, where \eqref{eq HJ bis} was obtained on compact \emph{measured} geodesic spaces $(X,d,\mu)$ provided the measure $\mu$ verifies some additional assumptions. More precisely, it is proved in \cite{LV07} that if $\mu$ verifies a doubling condition together with a local Poincar\'e inequality, then \eqref{eq HJ bis} holds true, for all $t$ and for all $x$ outside a set $N_t$ of $\mu$ measure $0$. Under the geometric assumption that $(X,d)$ is finite dimensional with Aleksandrov curvature bounded below, Lott and Villani obtained the validity of \eqref{eq HJ bis} for all $t$ and $x$. In \cite[Theorem 22.46]{Villani-book}, Villani proves \eqref{eq HJ bis} for all $t$ and $x$ on a Riemannian manifold.

We indicate that, during the preparation of this work, we learned that Theorem \ref{HJ} has also been obtained by Ambrosio, Gigli and Savar\'e in their recent paper \cite{AGS12} (see also \cite{AGS12bis}), with a very similar proof. Let us underline that the inequality
\begin{equation}\label{AGS}
\frac{d}{dt_{+}} Q_{t}f(x) \leq -\alpha^*\left(|\nabla^- Q_t f|(x)\right),
\end{equation}
which is equivalent to \eqref{eq HJ}, is an important ingredient in their study of gradient flows of entropic functionals over general metric spaces.  
The main source of inspiration of the present paper is the seminal work by Bobkov, Gentil and Ledoux \cite{BGL01} establishing the equivalence between the logarithmic Sobolev inequality and hypercontractivity properties of Hamilton-Jacobi solutions.

The main tool in the proof of Theorem \ref{HJ} is the following result of independent interest.
\begin{thm}\label{HJ 1}
Let $f:X\to \R$ be an upper semicontinuous function bounded from above. For all $t>0$ and $x\in X$, denote by $m(t,x)$ the set of points where the supremum \eqref{Pt} defining $P_tf(x)$ is reached: 
$$m(t,x)=\left\{\bar{y}\in X : P_tf(x)=f(\bar{y})-t\alpha\left(\frac{d(x,\bar{y})}{t}\right)\right\}.$$
These sets are always non empty and compact and it holds
$$\frac{d}{dt_{+}} P_{t}f(x)=\beta\left(\frac{1}{t}\max_{\bar{y}\in m(t,x)} d(x,\bar{y})\right),\qquad \forall t>0,\quad \forall x\in X$$
and 
$$\frac{d}{dt_{-}} P_{t}f(x)=\beta\left(\frac{1}{t}\min_{\bar{y}\in m(t,x)} d(x,\bar{y})\right),\qquad \forall t>0,\quad \forall x\in X,$$
where $\beta(h)=h\alpha'(h)-\alpha(h)$, $h\geq 0$.
\end{thm}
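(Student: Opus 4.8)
Write $g(t,y):=f(y)-t\alpha(d(x,y)/t)$, so that $P_tf(x)=\sup_{y\in X}g(t,y)$; the plan is to differentiate this family of suprema by an envelope (Danskin-type) argument, with $m(t,x)$ playing the role of the maximizing set. First I would establish that $m(t,x)$ is non-empty and compact. Since $\alpha$ is increasing and convex with $\alpha(0)=0$, it satisfies $\alpha(h)\to\infty$ as $h\to\infty$; writing $M:=\sup f<\infty$ and using $P_tf(x)\ge g(t,x)=f(x)$, any maximizing sequence $(y_n)$ for $P_tf(x)$ obeys $t\alpha(d(x,y_n)/t)=f(y_n)-g(t,y_n)\le M-f(x)+o(1)$, so $(y_n)$ lies eventually in a fixed closed ball, which is compact by hypothesis. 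Extracting a convergent subsequence $y_n\to\bar y$ and using that $f$ is upper semicontinuous while $d(x,\cdot)$ and $\alpha$ are continuous gives $g(t,\bar y)\ge\limsup_n g(t,y_n)=P_tf(x)$, hence $\bar y\in m(t,x)$; the same argument shows $m(t,x)$ is closed, so it is compact, and therefore $d(x,\cdot)$ attains a maximum $D^+$ and a minimum $D^-$ on it (both depending on $t$ and $x$).

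The second ingredient is one-dimensional: for fixed $y$, with $d:=d(x,y)$, the map $t\mapsto t\alpha(d/t)$ is $\mathcal{C}^1$ on $(0,\infty)$ with derivative $\alpha(d/t)-(d/t)\alpha'(d/t)=-\beta(d/t)$, so $\partial_t g(t,y)=\beta(d(x,y)/t)$ and $g(b,y)-g(a,y)=\int_a^b\beta(d(x,y)/r)\,dr$ for $0<a<b$. I would also note that $\beta$ is non-decreasing on $\R^+$: for $0\le h_1\le h_2$, convexity gives $\alpha(h_2)-\alpha(h_1)\le\alpha'(h_2)(h_2-h_1)$, whence $\beta(h_2)-\beta(h_1)\ge h_1(\alpha'(h_2)-\alpha'(h_1))\ge0$. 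Thus in the integral above the integrand is a non-increasing function of $r$; this double monotonicity — of $\beta$, and of $r\mapsto\beta(d/r)$ — is exactly what lets one squeeze the difference quotients.

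With these in hand, the ``easy'' half of each one-sided derivative follows by plugging a fixed, well-chosen maximizer into the supremum. Taking $\bar y\in m(t,x)$ with $d(x,\bar y)=D^+$ gives $P_{t+s}f(x)-P_tf(x)\ge g(t+s,\bar y)-g(t,\bar y)=\int_t^{t+s}\beta(D^+/r)\,dr\ge s\,\beta(D^+/(t+s))$, hence $\liminf_{s\to0^+}\tfrac1s(P_{t+s}f(x)-P_tf(x))\ge\beta(D^+/t)$; symmetrically, using a point of $m(t,x)$ at distance $D^-$ from $x$, one gets $\limsup_{s\to0^+}\tfrac1s(P_tf(x)-P_{t-s}f(x))\le\beta(D^-/t)$. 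The matching reverse inequalities are the crux of the argument and the main obstacle: I need that maximizers at times near $t$ accumulate only inside $m(t,x)$. Given $s_n\to0^+$ and $y_n\in m(t\pm s_n,x)$, the bound $P_{t\pm s_n}f(x)\ge f(x)$ again keeps $(y_n)$ in a fixed compact ball (here one uses $\alpha(h)\to\infty$ together with a crude bound such as $t-s_n\ge t/2$); if $y_{n_k}\to\bar y$, then passing to the limit in the inequality $g(t\pm s_{n_k},y_{n_k})\ge g(t\pm s_{n_k},z)$ — upper semicontinuity of $f$ on the left-hand side, continuity of $g(\cdot,z)$ on the right — yields $g(t,\bar y)\ge g(t,z)$ for every $z\in X$, so $\bar y\in m(t,x)$ (and, as a by-product, $s\mapsto P_sf(x)$ is continuous at $t$). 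Consequently $\limsup_n d(x,y_n)\le D^+$ in the $t+s_n$ case, and $\liminf_n d(x,y_n)\ge D^-$ in the $t-s_n$ case.

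It then remains to combine. Since $y_n\in m(t+s_n,x)$ and $P_tf(x)\ge g(t,y_n)$, $P_{t+s_n}f(x)-P_tf(x)\le g(t+s_n,y_n)-g(t,y_n)=\int_t^{t+s_n}\beta(d(x,y_n)/r)\,dr\le s_n\,\beta(d(x,y_n)/t)$, so by continuity and monotonicity of $\beta$, $\limsup_n\tfrac1{s_n}(P_{t+s_n}f(x)-P_tf(x))\le\beta(D^+/t)$; together with the ``easy'' bound this gives $\tfrac{d}{dt_+}P_tf(x)=\beta(D^+/t)$. In the same way $P_tf(x)-P_{t-s_n}f(x)\ge g(t,y_n)-g(t-s_n,y_n)=\int_{t-s_n}^t\beta(d(x,y_n)/r)\,dr\ge s_n\,\beta(d(x,y_n)/t)$, so $\liminf_n\tfrac1{s_n}(P_tf(x)-P_{t-s_n}f(x))\ge\beta(D^-/t)$, whence $\tfrac{d}{dt_-}P_tf(x)=\beta(D^-/t)$. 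The only genuinely delicate steps are the non-emptiness and compactness of $m(t,x)$ and the stability statement of the third paragraph; everything else is routine bookkeeping with the two monotonicities of $\beta$.
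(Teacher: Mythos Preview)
Your proof is correct and follows essentially the same envelope/Danskin-type strategy as the paper: one inequality for each one-sided derivative comes from freezing a maximizer $\bar y\in m(t,x)$, the reverse inequality from choosing $y_n\in m(t\pm s_n,x)$ and invoking the stability statement that limit points of $(y_n)$ lie in $m(t,x)$ (the paper packages this stability and the non-emptiness/compactness of $m(t,x)$ into a separate lemma, Lemma~\ref{Kuratowski}). The only cosmetic difference is that you control the increments $g(t+s,y)-g(t,y)$ via the integral $\int_t^{t+s}\beta(d(x,y)/r)\,dr$ and the monotonicity of $r\mapsto\beta(d/r)$, whereas the paper uses directly that $d\mapsto t\alpha(d/t)-(t+h)\alpha(d/(t+h))$ is non-decreasing; these are two ways of exploiting the same convexity of $\alpha$.
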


\subsection{Hypercontractivity of $Q_t$ and the log-Sobolev inequality} \label{sec:hyp}
Let $\mu$ be a Borel probability measure on $X$. Recall that the entropy functional $\ent_\mu(\,\cdot\,)$ is defined by
$$\ent_\mu(g) = \int g \log\left(\frac{g}{\int g\,d\mu}\right)\,d\mu,\qquad \forall g>0.$$
In order to introduce the log-Sobolev inequality, and for technical reasons, define, for $r>0$,
$$
\mathrm{Lip}(f,r) = \sup_{\genfrac{}{}{0pt}{}{x,y:}{d(x,y)\leq r}} \frac{|f(y)-d(x)|}{d(x,y)}
$$
and observe that the usual Lipschitz constant is 
$\mathrm{Lip}(f)=\sup_r \mathrm{Lip}(f,r)$. 
Then, we denote by $\mathcal{F}_\alpha$ the set of bounded functions $f \colon  X \to \mathbb{R}$
such that $\mathrm{Lip}(f,r)<\infty$  for some  $r>0$ and
$$
\mathrm{Lip}(f) \leq \lim_{h\to \infty}\frac{\alpha(h)}{h}
$$
(observe that if $\alpha(h)/h \to \infty$ when $h\to\infty$, this last condition is empty).

The probability measure $\mu$ is said to satisfy the \emph{modified log-Sobolev inequality minus} ${\bf LSI}_\alpha^-(C)$ for some $C>0$ if
\begin{equation} \tag{${\bf LSI}_\alpha^-(C)$}
\ent_\mu(e^f)\leq C\int  \alpha^*(|\nabla^-f|) e^f\,d\mu \qquad \forall f \in \mathcal{F}_\alpha .
\end{equation}

In particular,  when $\alpha(h)= h^p/p$, $h\geq 0$, with $p>1$, it holds $\alpha^*(h)=h^q/q$, $h\geq 0$ with $1/p+1/q=1$. In this case, we write ${\bf LSI}^-_{q}$ for ${\bf LSI}_\alpha^-$. If $X$ is a Riemannian manifold and $\mu$ is absolutely continuous with respect to the volume element, the inequality ${\bf LSI}^-_{2}$ is the usual logarithmic Sobolev inequality introduced by Gross \cite{Gr75}.

Following Bobkov, Gentil and Ledoux \cite{BGL01} we relate ${\bf LSI}_\alpha^-(C)$ to hypercontractivity properties of the family of operators $\{Q_t\}_{t>0}$. To perform the proof, we need to make some restrictions on the function $\alpha.$ We will say that $\alpha$ verifies the \emph{$\Delta_2$-condition} \cite{RR} if there is some positive constant $K$ such that
$$\alpha(2x)\leq K\alpha(x),\qquad \forall x\geq0.$$
\begin{thm}\label{thm-hyper}
Suppose that $\alpha$ verifies the $\Delta_2$-condition. Then the exponents $r_\alpha\leq p_\alpha$ defined by 
$$r_\alpha = \inf_{x>0} \frac{x\alpha'(x)}{\alpha(x)} \geq 1\qquad\text{and}\qquad 1<p_\alpha=\sup_{x>0} \frac{x\alpha'(x)}{\alpha(x)}$$
are both finite. Moreover,
the measure $\mu$ satisfies ${\bf LSI}_\alpha^-(C)$ if and only if for all $t>0$, for all $t_o\leq C(p_\alpha -1)$ and for all bounded continuous function $f:X \to \mathbb{R}$,
\begin{eqnarray}\label{hypercont}
\left\|e^{Q_tf}\right\|_{k(t)} \leq \left\|  e^f\right\|_{k(0)},
\end{eqnarray}
with $$k(t)=\left\{\begin{array}{ll} \left(1+\frac{C^{-1}(t-t_o)}{p_\alpha-1}\right)^{p_\alpha-1}\mathbf{1}_{t\leq t_o} + \left(1+\frac{C^{-1}(t-t_o)}{r_\alpha-1}\right)^{r_\alpha-1}\mathbf{1}_{t> t_o}& \text{ if } r_{\alpha}>1 \\ \min\left(1; \left(1+\frac{C^{-1}(t-t_o)}{p_\alpha-1}\right)^{p_\alpha-1}\right) & \text{ if } r_{\alpha}=1\end{array}\right.,$$
where 
$\|g\|_k= \left(\int |g|^k d\mu\right)^{1/k}$ for $k\neq0$ and $\|g\|_{0}= \exp\left( \int \log g \,d\mu\right)$.
\end{thm}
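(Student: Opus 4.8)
The plan is to follow the Bobkov--Gentil--Ledoux strategy \cite{BGL01}, reducing the equivalence to the study of the logarithmic derivative of a single function $t \mapsto \log \|e^{Q_t f}\|_{k(t)}$. First I would treat the finiteness of $r_\alpha$ and $p_\alpha$: convexity of $\alpha$ together with $\alpha(0)=0$ gives $x\alpha'(x)\geq \alpha(x)$, hence $r_\alpha\geq 1$; the $\Delta_2$-condition is exactly what forces $p_\alpha<\infty$, since integrating $x\alpha'(x)/\alpha(x)\leq p_\alpha$ would fail for $\alpha$ growing faster than any polynomial, and conversely $\Delta_2$ bounds this ratio. A short lemma shows $\alpha$ is sandwiched between $\alpha(1)x^{r_\alpha}$ and $\alpha(1)x^{p_\alpha}$ for $x\geq 1$ (and the reverse for $x\leq 1$), which is what will make all the integrability manipulations below legitimate for $f$ bounded continuous. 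I would also record the Fenchel-duality fact that $\alpha^*$ is finite on the relevant range and the pointwise identity $\beta(h)=h\alpha'(h)-\alpha(h)=\alpha^*(\alpha'(h))$, to connect with Theorems \ref{HJ} and \ref{HJ 1}.

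Next, for the main implication (${\bf LSI}_\alpha^-(C)\Rightarrow$ hypercontractivity), fix bounded continuous $f$, set $F(t,x)=Q_tf(x)$, and define $\Lambda(t)=\log\|e^{F(t,\cdot)}\|_{k(t)} = \tfrac{1}{k(t)}\log\int e^{k(t)F(t,x)}\,d\mu(x)$. I would differentiate $\Lambda$ in $t$: the derivative splits into a term coming from $k'(t)$ (which produces $\frac{k'(t)}{k(t)^2}$ times an entropy-like quantity $\ent_{\mu}(e^{k(t)F})$, after normalizing by $\int e^{k(t)F}\,d\mu$) and a term from $\partial_t F$, which by inequality \eqref{AGS} (equivalently \eqref{eq HJ}) satisfies $\partial_t F(t,x)\leq -\alpha^*(|\nabla^- Q_t f|(x))$ for all $t,x$ — this is where the HLO machinery enters and why no exceptional null sets appear. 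Applying ${\bf LSI}_\alpha^-(C)$ to the function $g=k(t)F(t,\cdot)$ (which lies in $\mathcal{F}_\alpha$ by the growth bounds on $\alpha$ and boundedness of $Q_tf$), and using $|\nabla^- g| = k(t)|\nabla^- F|$ together with the homogeneity-type bound $\alpha^*(k(t)u)\leq k(t)^{p_\alpha/(p_\alpha-1)}\alpha^*(u)$ on one range and the $r_\alpha$-analogue on the other, one arrives at a differential inequality of the form $\Lambda'(t)\leq 0$ provided $k(t)$ is chosen so that $\frac{k'(t)}{k(t)^2} = \frac{1}{C}\,(\text{the appropriate power of }k(t))$. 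Solving this ODE — separately on $t\leq t_o$ with exponent $p_\alpha$ and on $t>t_o$ with exponent $r_\alpha$, matching at $t_o$ with $k(t_o)=1$ — gives exactly the stated $k(t)$; the initial normalization $t_o\leq C(p_\alpha-1)$ is what guarantees $k(0)>0$ (or $=0$, handled by the $\|\cdot\|_0$ convention). Then $\Lambda$ nonincreasing yields \eqref{hypercont}.

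For the converse, I would run the same computation at $t=0^+$: if \eqref{hypercont} holds then $\Lambda(t)\leq \Lambda(0)$ forces $\frac{d}{dt_+}\Lambda(t)\big|_{t=0}\leq 0$. Expanding this right derivative, using $Q_0 f=f$, $\frac{d}{dt_+}Q_tf\big|_{t=0}=-\alpha^*(|\nabla^- f|)$ (again from \eqref{AGS}, noting that for $t=0$ this limit is attained since $\alpha^*$ is continuous and $f\in\mathcal{F}_\alpha$ is locally Lipschitz), and $k(0)$, $k'(0^+)$ from the explicit formula, one recovers precisely $\ent_\mu(e^f)\leq C\int \alpha^*(|\nabla^- f|)e^f\,d\mu$ after rescaling $f\mapsto \lambda f$ and letting $\lambda\to 0$ or by homogeneity absorbing the constants; a density/truncation argument extends this from bounded continuous $f$ to all $f\in\mathcal{F}_\alpha$.

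The main obstacle I anticipate is the justification of the differentiation of $\Lambda$ under the integral sign and of the chain of inequalities at the level of right/left derivatives rather than genuine derivatives: $t\mapsto Q_tf(x)$ need not be differentiable for every $t$, only one-sided differentiable (Theorem \ref{HJ 1}), and $\Lambda$ itself is only locally Lipschitz in $t$. One must argue that $\Lambda$ is locally Lipschitz (uniform control of $\partial_t Q_tf$ via $\beta(d(x,m(t,x))/t)$ and the $\Delta_2$-bounds), hence differentiable a.e. and equal to the integral of its derivative, and that at every $t$ the right derivative satisfies the desired inequality — this is a standard but delicate point where the $\Delta_2$-condition is used essentially (to keep $\alpha^*(k(t)|\nabla^-Q_tf|)$ dominated uniformly on compact $t$-intervals). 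A secondary technical nuisance is verifying $k(t)F(t,\cdot)\in\mathcal{F}_\alpha$ uniformly in $t$, i.e. that $\mathrm{Lip}(Q_tf)$ stays bounded and below the threshold $\lim_h \alpha(h)/h$; this follows from the elementary contraction estimate $\mathrm{Lip}(Q_tf,r)\leq \mathrm{Lip}(f)$ valid for all $t>0$.
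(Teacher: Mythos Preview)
Your overall strategy matches the paper's proof closely: differentiate $H(t)=\log\|e^{Q_tf}\|_{k(t)}$, feed in ${\bf LSI}_\alpha^-$ and the Hamilton--Jacobi inequality \eqref{AGS}, and choose $k$ so the right derivative is nonpositive. The forward direction is essentially correct as you sketch it, with one addition you omit: when $\alpha(h)/h\to\ell<\infty$, a bounded continuous $f$ need not satisfy $\mathrm{Lip}(f)\leq\ell$, so the paper first replaces $f$ by $(1-\varepsilon)Q_sf$ (which does have Lipschitz constant $\leq(1-\varepsilon)\ell$ by Lemma~\ref{Falpha}), applies the argument, and then sends $\varepsilon,s\to0$ using $Q_{t+s}f\leq Q_t(Q_sf)$.

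The converse direction has a genuine gap. Your plan to ``rescale $f\mapsto\lambda f$ and let $\lambda\to0$ or by homogeneity absorb the constants'' will not work here: $\alpha^*$ is \emph{not} homogeneous, and the LSI itself is not invariant under $f\mapsto\lambda f$ in a way that lets you trade $k(0)\neq 1$ for $k(0)=1$. The paper's device is different and is exactly why the theorem is stated with a \emph{free parameter} $t_o\leq C(p_\alpha-1)$: one takes $t_o<C(p_\alpha-1)$ strictly, so that $k(0)=\bigl(1-\tfrac{C^{-1}t_o}{p_\alpha-1}\bigr)^{p_\alpha-1}>0$ and $k'(0)>0$, derives
\[
\ent_\mu\bigl(e^{k(0)f}\bigr)\leq \frac{k(0)^2}{k'(0)}\int\alpha^*(|\nabla^-f|)\,e^{k(0)f}\,d\mu,
\]
and then sends $t_o\to 0^+$, which gives $k(0)\to1$ and $k(0)^2/k'(0)\to C$ simultaneously. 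Also, you assert $\tfrac{d}{dt_+}Q_tf\big|_{t=0}=-\alpha^*(|\nabla^-f|)$; only the inequality $\liminf_{t\to0^+}\tfrac{Q_tf-f}{t}\geq-\alpha^*(|\nabla^-f|)$ is available (Proposition~\ref{prop Q_{t}}(3)), and the paper passes to the integral via a Fatou-type argument (Proposition~\ref{prop Q_{t}}(4)) rather than claiming a pointwise derivative. Finally, your closing estimate $\mathrm{Lip}(Q_tf,r)\leq\mathrm{Lip}(f)$ is not correct in general; the bound the paper uses is $\mathrm{Lip}(Q_tf,r)\leq\alpha'(2r/t)$ for the specific $r=t\alpha^{-1}(\mathrm{Osc}(f)/t)$ (Lemma~\ref{Falpha}), and $\mathrm{Lip}(Q_tf)\leq\ell$ follows separately when $\ell<\infty$.
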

Our proof follows the line of \cite{BGL01}. Let us explain in few words how to derive \eqref{hypercont} from ${\bf LSI}^-_\alpha$. Since $Q_tf\to f$ when $t\to 0$, it is enough to show that $H :t\mapsto \log\left\|e^{Q_tf}\right\|_{k(t)}$ is non-increasing. The left derivative of $H$ has an expression involving $\ent_\mu (e^{k(t)Q_t f})$ and $\int \frac{d}{dt_+}Q_tf e^{k(t)Q_t f}\,d\mu$ (see Proposition \ref{deriv}). To bound the first term from above, we apply the inequality ${\bf LSI}^-_\alpha$. To bound the second term, we use the inequality \eqref{AGS} which is precisely in the right direction to prove that the left derivative of $H$ is negative.

\subsection{From log-Sobolev to transport-entropy inequalities}
Following \cite{BGL01,LV07}, a byproduct of the above hypercontractivity result is a metric space extension of Otto-Villani's theorem \cite{OV00} that indicates that log-Sobolev inequalities imply transport-entropy inequalities.

Let $c:X\times X \to \R$ be a continuous function; recall that the optimal transport cost $\mathcal{T}_{c}(\nu_{1},\nu_{2})$ between two Borel probability measures $\nu_{1},\nu_{2} \in \mathcal{P}(X)$ (the set of all Borel probability measures on $X$) is defined by 
$$\mathcal{T}_{c} (\nu_{1},\nu_{2}) =\inf_{\pi \in P(\nu_{1},\nu_{2})} \iint c(x,y)\,\pi(dxdy),$$
where $P(\nu_{1},\nu_{2})$ is the set of all probability measures $\pi$ on $X\times X$ such that $\pi(dx\times X)=\nu_{1}(dx)$ and $\pi(X\times dy)=\nu_{2}(dy).$ 

The probability measure $\mu$ is said to satisfy the \emph{transport-entropy inequality ${\bf T}_c(C)$}, for some $C>0$ if
\begin{equation} \tag{${\bf T}_c(C)$}
\mathcal{T}_c(\mu,\nu)\leq C H(\nu| \mu),  \qquad\forall \nu \in \mathcal{P}(X),
\end{equation}
where 
$$
H(\nu|\mu)= \left\{ 
\begin{array}{ll}
\int \log \frac{d\nu}{d\mu} \,d\nu & \mbox{if } \nu \ll \mu \\
+\infty & \mbox{otherwise }
\end{array}
\right.
$$ 
is the relative entropy of $\nu$ with respect to $\mu$. This class of inequalities was introduced by Marton and Talagrand \cite{Ma86,Ta96}. When $c(x,y)=\alpha(d(x,y))$ we denote the optimal transport cost by $\mathcal{T}_\alpha(\,\cdot\,,\,\cdot\,)$ and the corresponding transport inequality by ${\bf T}_\alpha.$ In the particular case, when $\alpha(x)=x^p/p,$ $p\geq2$ we use the notation $\mathcal{T}_p$ and ${\bf T}_p$.

The first point of the next theorem will appear to be an easy consequence of Theorem \ref{thm-hyper} and of Bobkov and G\"otze dual formulation of the inequality ${\bf T}_\alpha$ (which roughly speaking corresponds to the hypercontractivity with $t_o=C(p_\alpha-1)$ or equivalently $k(0)=0$).

\begin{thm}\label{Otto-Villani}
Suppose that $\alpha$ verifies the $\Delta_2$-condition. If $\mu$ verifies ${\bf LSI}_{\alpha}^-(C)$, then it verifies ${\bf T}_{\alpha}(A)$, with $$A=\max\left( ((p_\alpha-1)C)^{r_\alpha-1};((p_\alpha-1)C)^{p_\alpha-1}\right),$$ where the numbers $r_\alpha,p_\alpha$ are defined in Theorem \ref{hypercont}.
\end{thm}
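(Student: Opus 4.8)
The strategy is to combine the hypercontractivity characterization of Theorem \ref{thm-hyper} with the Bobkov--G\"otze dual formulation of ${\bf T}_\alpha$. Recall that ${\bf T}_\alpha(A)$ is equivalent to the inequality
$$
\int e^{Q_1^{(A)} f}\, d\mu \leq e^{\int f\, d\mu}\qquad \text{for all bounded continuous } f,
$$
where $Q_1^{(A)}$ denotes the infimum-convolution at time $1$ associated with the cost $A\,\alpha(d(\cdot,\cdot))$ (this is the dual/Kantorovich--Rubinstein description of ${\bf T}_\alpha$; when $A=1$ it reads $\int e^{Q_1 f}\,d\mu \le e^{\int f d\mu}$, and the general case follows by the homogeneity $A\alpha(d/s) = (A s)\,\alpha\!\big(d/(As)\big)/1$ suitably interpreted, so rescaling time by $A$ and using $\alpha$'s scaling converts the $A$-cost semigroup into the usual $Q_t$ at a shifted time). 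The point is that this dual inequality is precisely the borderline case of the hypercontractive estimate \eqref{hypercont}: it corresponds to taking $t_o = C(p_\alpha-1)$, so that $k(0)=0$ and $\|e^f\|_{k(0)} = \exp(\int \log e^f\, d\mu) = e^{\int f\,d\mu}$, and then evaluating at the time $t$ where $k(t)=1$.

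Concretely, I would proceed as follows. First, assume ${\bf LSI}_\alpha^-(C)$ and apply Theorem \ref{thm-hyper} with the choice $t_o = C(p_\alpha-1)$ (the largest admissible value). Then for every bounded continuous $f$ and every $t>0$,
$$
\left\| e^{Q_t f}\right\|_{k(t)} \le \left\| e^f\right\|_{k(0)} = e^{\int f\, d\mu}.
$$
Next, I locate the time $t_1 > t_o$ at which $k(t_1)=1$: in the case $r_\alpha>1$ this is $t_1 = t_o + C(r_\alpha-1) = C(p_\alpha-1)+C(r_\alpha-1)$, and in the case $r_\alpha=1$ one has $k(t)\le 1$ for all $t\ge t_o$, so one can use any $t_1\ge t_o$ and then send $t_1\downarrow t_o$. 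At $t=t_1$ the left-hand side becomes $\int e^{Q_{t_1} f}\, d\mu$, so we get
$$
\int e^{Q_{t_1} f}\, d\mu \le e^{\int f\, d\mu}\qquad \text{for all bounded continuous } f.
$$
Finally, I translate the operator $Q_{t_1}$ (inf-convolution with cost $t_1\alpha(d/t_1)$) into an inf-convolution with cost $A\,\alpha(d)$ for an appropriate $A$, using the identity $t\alpha(d/t) = t\alpha(d/t)$ together with the elementary bounds on $\alpha$ coming from $r_\alpha \le x\alpha'(x)/\alpha(x)\le p_\alpha$, namely $\lambda^{p_\alpha}\alpha(h)\le \alpha(\lambda h)$ for $\lambda\le 1$ and $\alpha(\lambda h)\le \lambda^{r_\alpha}\alpha(h)$ for $\lambda\le 1$ (with the inequalities reversed for $\lambda\ge 1$). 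These bounds give, for $t\le 1$, $t\alpha(d/t)\ge t^{1-p_\alpha}\alpha(d) \ge \alpha(d)$ and more precisely $t\alpha(d/t)\ge t^{1-p_\alpha}\alpha(d)$, while for $t\ge 1$, $t\alpha(d/t)\ge t^{1-r_\alpha}\alpha(d)$. Hence $Q_{t_1} f(x) \le Q^{(A)}_1 f(x)$ with $A = t_1^{1-p_\alpha}$ if $t_1\le 1$ and $A = t_1^{1-r_\alpha}$ if $t_1\ge 1$; substituting $t_1 = C(p_\alpha-1) + C(r_\alpha - 1) = ((p_\alpha-1)C)\cdot\big(1 + \tfrac{r_\alpha-1}{p_\alpha-1}\big)$... — here I should be more careful and instead note that the clean bound comes from comparing directly at $t_1 = (p_\alpha-1)C$ in the borderline/dual reading, which after the scaling yields $A = \max\big( ((p_\alpha-1)C)^{r_\alpha-1}, ((p_\alpha-1)C)^{p_\alpha-1}\big)$ according to whether $(p_\alpha-1)C$ is $\le 1$ or $\ge 1$ and which exponent is the binding one. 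Then $\int e^{Q_1^{(A)} f}\, d\mu \le e^{\int f d\mu}$ for all bounded continuous $f$, which by Bobkov--G\"otze duality is exactly ${\bf T}_\alpha(A)$.

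\textbf{Main obstacle.} The conceptual content — that the dual transport inequality is the endpoint of hypercontractivity — is immediate from Theorem \ref{thm-hyper}; the only real work is the bookkeeping of \emph{constants} when passing from the semigroup $Q_t$ (with cost $t\alpha(d/t)$ read at the special time $t_1$) to an inf-convolution with cost $A\,\alpha(d)$. This requires the two-sided power comparisons for $\alpha$ furnished by the finiteness of $r_\alpha$ and $p_\alpha$ (themselves a consequence of the $\Delta_2$-condition, as recorded in Theorem \ref{thm-hyper}), and one must split into the cases $(p_\alpha-1)C \le 1$ and $(p_\alpha-1)C \ge 1$, which is exactly why the final constant appears as a $\max$ of the two powers. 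One should also double-check the approximation argument allowing the dual inequality, a priori stated for bounded continuous $f$, to be applied to the functions $f = \alpha(d(\cdot, x_0))$-type test functions needed to recover $\mathcal{T}_\alpha(\mu,\nu)$ — but this is standard (monotone approximation by bounded truncations, together with the lower semicontinuity of $\mathcal{T}_\alpha$), and the necessary duality statement itself is classical (Bobkov--G\"otze), so no new difficulty arises there.
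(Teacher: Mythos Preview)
Your approach is essentially the paper's: apply Theorem~\ref{thm-hyper} with $t_o=C(p_\alpha-1)$ so that $k(0)=0$, read off the resulting inequality at the time where $k=1$ as a Bobkov--G\"otze dual statement, and convert the cost $t\alpha(d/t)$ to $\alpha(d)$ via the power bounds furnished by $r_\alpha,p_\alpha$. The paper carries out the last step on the transport side rather than by comparing inf-convolutions directly (it writes the hypercontractivity inequality as the family $\mathcal{T}_{\alpha(\cdot/t)}(\mu,\nu)\le \frac{1}{tk(t)}H(\nu|\mu)$ and then uses $\alpha(x)\le \max(t^{r_\alpha},t^{p_\alpha})\alpha(x/t)$), which sidesteps the direction-of-inequality bookkeeping you struggle with; but the content is identical.

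There is, however, a genuine slip in your execution. You look for $t_1>t_o$ with $k(t_1)=1$ and compute $t_1=C(p_\alpha-1)+C(r_\alpha-1)$; in fact $k(t_o)=1$ (both branches of the piecewise definition equal $1$ at $t=t_o$), so the correct choice is simply $t_1=t_o=C(p_\alpha-1)$, as you eventually note. Relatedly, your scaling inequalities have the exponents swapped: for $t\ge 1$ one has $t\alpha(d/t)\ge t^{1-p_\alpha}\alpha(d)$ (from $\alpha(\lambda h)\ge \lambda^{p_\alpha}\alpha(h)$ for $\lambda\le 1$) and for $t\le 1$ one has $t\alpha(d/t)\ge t^{1-r_\alpha}\alpha(d)$, not the other way round; and the direction you need on the inf-convolutions is $Q_{t_1}f\ge \inf_y\{f(y)+A^{-1}\alpha(d(x,y))\}$ (larger cost gives larger infimum), which is what yields ${\bf T}_\alpha(A)$. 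Once these signs are fixed, your argument gives exactly $A=\max\big(((p_\alpha-1)C)^{r_\alpha-1},((p_\alpha-1)C)^{p_\alpha-1}\big)$, matching the paper.
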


In a Riemannian framework and for the quadratic function $\alpha(t)=t^2/2$, Theorem \ref{Otto-Villani} was first obtained by Otto and Villani in \cite{OV00}, closely followed by Bobkov, Gentil and Ledoux \cite{BGL01}. Extensions to other functions $\alpha$ were provided in \cite{BGL01, GGM05}. The path space case was treated by Wang in \cite{W04}. In \cite{LV07}, Lott and Villani extended to certain geodesic measured spaces $(X,d,\mu)$ the Hamilton-Jacobi approach of \cite{BGL01} in the quadratic case. They proved Theorem \ref{Otto-Villani} under additional assumptions on $\mu$ (doubling property and local Poincar\'e). Under the same assumptions Balogh, Engoulatov, Hunziker and Maasalo \cite{BEHM09} treated the case of ${\bf LSI}_q^-$ for all $q\leq2$. The first proofs of Otto-Villani theorem valid on any complete separable metric space appeared in \cite{Go09} and \cite{GRS12}. Their common feature is the use of the stability of the log-Sobolev inequality under tensor products of the reference probability measure. In a recent paper \cite{GL12}, Gigli and Ledoux give another quick proof of Otto-Villani theorem on metric spaces. It is based on calculations along gradient flows in the Wasserstein space.

Using some rough properties of the operators $Q_t$, we also provide a metric space generalization of another result by Otto and Villani \cite{OV00} relating transport-entropy inequalities to Poincar\'e inequality.
\begin{prop}\label{TransversPoincare}
Let $\theta:\R^+\to\R^+$ be any function such that $\theta(x)\geq \min(x^2,a^2)$ for some $a>0$. If $\mu$ verifies ${\bf T}_\theta(C)$ for some $C>0$, then it verifies the following Poincar\'e inequality: $$\mathrm{Var}_\mu (f) \leq \frac{C}{2} \int |\nabla^- f|^2\,d\mu,$$
for all bounded function $f$ such that $\mathrm{Lip}(f,r)<\infty$, for some $r>0$.
\end{prop}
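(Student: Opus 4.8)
The plan is to derive the Poincaré inequality from the transport-entropy inequality $\mathbf{T}_\theta(C)$ by applying it to measures $\nu$ that are small perturbations of $\mu$ and performing a second-order Taylor expansion in the perturbation parameter, following the classical linearization argument of Otto and Villani. First I would fix a bounded function $f$ with $\mathrm{Lip}(f,r)<\infty$ for some $r>0$; by replacing $f$ with $f - \int f\,d\mu$ and noting that both sides of the desired inequality are unchanged, I may assume $\int f\,d\mu = 0$ and also, after truncation and scaling, that $f$ is bounded with small oscillation. For $\varepsilon>0$ small, set $d\nu_\varepsilon = (1+\varepsilon f)\,d\mu$, which is a genuine probability measure since $\int f\,d\mu=0$ and $1+\varepsilon f>0$ for $\varepsilon$ small. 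The right-hand side expands as
\begin{equation*}
H(\nu_\varepsilon|\mu) = \int (1+\varepsilon f)\log(1+\varepsilon f)\,d\mu = \frac{\varepsilon^2}{2}\int f^2\,d\mu + o(\varepsilon^2) = \frac{\varepsilon^2}{2}\,\mathrm{Var}_\mu(f) + o(\varepsilon^2),
\end{equation*}
using $\int f\,d\mu=0$ and dominated convergence (justified by the boundedness of $f$).

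The heart of the argument is the lower bound on $\mathcal{T}_\theta(\mu,\nu_\varepsilon)$ of order $\varepsilon^2$ matching $\tfrac{\varepsilon^2}{2}\int|\nabla^- f|^2\,d\mu$. Here I would use the dual (Kantorovich) description together with the hypothesis $\theta(x)\geq \min(x^2,a^2)$: since $\theta$ dominates the cost $c_a(x,y) := \min(d(x,y)^2, a^2)$, we have $\mathcal{T}_\theta(\mu,\nu_\varepsilon)\geq \mathcal{T}_{c_a}(\mu,\nu_\varepsilon)$, so it suffices to bound $\mathcal{T}_{c_a}$ from below. By Kantorovich duality,
\begin{equation*}
\mathcal{T}_{c_a}(\mu,\nu_\varepsilon) \geq \int g\,d\nu_\varepsilon - \int \tilde{Q}g\,d\mu = \varepsilon\int f g\,d\mu + \int (g - \tilde{Q}g)\,d\mu,
\end{equation*}
where $\tilde{Q}g(x) = \inf_y\{g(y) + \min(d(x,y)^2,a^2)\}$ is the inf-convolution with respect to $c_a$, and $g$ is any bounded continuous test function. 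The natural choice is $g = \varepsilon s f$ for a scaling parameter $s>0$ to be optimized: then $\int fg\,d\mu = \varepsilon s\int f^2\,d\mu = \varepsilon s\,\mathrm{Var}_\mu(f)$, while for the correction term I would show that
\begin{equation*}
\frac{g(x) - \tilde{Q}g(x)}{\varepsilon^2 s^2} = \frac{1}{s}\left(\frac{f(x) - Q^{(a)}_{s}f(x)}{\varepsilon s}\right) \longrightarrow -\,|\nabla^- f|^2(x)
\end{equation*}
in an appropriate sense as $\varepsilon\to 0$, using that for a function with bounded local Lipschitz constant the inf-convolution against $\delta\cdot\min(d^2,a^2)$ satisfies $(f - \inf_y\{f(y)+\delta\min(d(x,y)^2,a^2)\})/\delta \to |\nabla^- f|^2(x)$ as the perturbation strength $\delta\to\infty$ — more precisely, I would work directly with $g=\varepsilon s f$ and the pointwise estimate $g(x)-\tilde Q g(x) \leq \varepsilon s f(x) - (\varepsilon s f(y) + \min(d(x,y)^2,a^2))$ optimized over $y$ near $x$, giving $\limsup_{\varepsilon\to 0}\varepsilon^{-2}(g(x)-\tilde Q g(x)) \leq \tfrac{s^2}{4}\,(\text{something})$; the clean way is to track that the optimal displacement is of order $\varepsilon$, so $d(x,y)\leq r$ eventually and the truncation in $c_a$ is inactive, reducing everything to the quadratic cost $d(x,y)^2$ where $\inf_y\{\varepsilon s f(y) + d(x,y)^2\}$ has the expansion $\varepsilon s f(x) - \tfrac{\varepsilon^2 s^2}{4}|\nabla^- f|^2(x) + o(\varepsilon^2)$. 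Combining, dividing by $\varepsilon^2$, letting $\varepsilon\to 0$ (with a dominated-convergence justification using $\mathrm{Lip}(f,r)<\infty$ and boundedness of $f$), and using $\mathbf{T}_\theta(C)$ yields
\begin{equation*}
s\,\mathrm{Var}_\mu(f) - \frac{s^2}{4}\int|\nabla^- f|^2\,d\mu \leq \frac{C}{2}\,\mathrm{Var}_\mu(f);
\end{equation*}
optimizing over $s$ — the choice $s = 2\,\mathrm{Var}_\mu(f)\big/\!\int|\nabla^-f|^2\,d\mu$ is the maximizer of the left side — gives $\mathrm{Var}_\mu(f)^2/\!\int|\nabla^- f|^2\,d\mu \leq \tfrac{C}{2}\,\mathrm{Var}_\mu(f)$, i.e. the claimed $\mathrm{Var}_\mu(f)\leq \tfrac{C}{2}\int|\nabla^- f|^2\,d\mu$ (the degenerate case $\int|\nabla^-f|^2\,d\mu=0$, forcing $f$ constant, being trivial).

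The main obstacle I expect is making the pointwise asymptotics of the inf-convolution term uniform enough to pass to the limit under the integral sign: one needs that $\varepsilon^{-2}(\varepsilon s f(x) - \inf_y\{\varepsilon s f(y) + \min(d(x,y)^2,a^2)\})$ is dominated by an integrable function independent of $\varepsilon$ and converges a.e. (or in $L^1(\mu)$) to $\tfrac{s^2}{4}|\nabla^- f|^2(x)$. The domination is where $\mathrm{Lip}(f,r)<\infty$ enters crucially: choosing $y$ on a near-geodesic (or just any point) at distance $\sim \varepsilon$ from $x$ gives an upper bound of order $\mathrm{Lip}(f,r)^2$, uniformly in $\varepsilon$ once $\varepsilon$ is small enough that the optimal $y$ lies within distance $r$ and within the region where $\min(d^2,a^2)=d^2$; the lower bound (convergence from below to $|\nabla^-f|^2$) follows from the definition of $|\nabla^- f|$ as a $\limsup$ of difference quotients. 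Separability and compactness of closed balls guarantee the infimum defining $\tilde Q g$ is attained, which streamlines these estimates. If one prefers to avoid delicate a.e. convergence, an alternative is to bound $\mathcal{T}_{c_a}$ from below by a more robust quantity — e.g. via the $1$-Lipschitz restriction of the dual problem — but the second-order term would then be lost, so the Taylor-expansion route above seems the right one, with the uniform domination step being the technical crux.
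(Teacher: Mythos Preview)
Your linearization strategy is sound and will give the claimed Poincar\'e inequality, but it is not the route the paper takes. The paper works entirely on the dual side: from $\mathbf{T}_\theta(C)$ it invokes the Bobkov--G\"otze exponential inequality $\int e^{C^{-1}tR_tf}\,d\mu\le e^{C^{-1}t\int f\,d\mu}$ for the inf-convolution $R_tf(x)=\inf_y\{f(y)+t^{-1}\theta(d(x,y))\}$, shows by a simple oscillation bound that $R_tf\ge Q_tf$ (with $Q_tf$ the quadratic inf-convolution) for $t$ small, Taylor-expands both sides of the exponential inequality to second order in $t$, and lets $t\to 0$ using the asymptotic $\limsup_{t\to 0}(f-Q_tf)/t\le \tfrac14|\nabla^-f|^2$ (Proposition~\ref{prop Q_{t}}(3)--(4)). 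The variance appears automatically from the second-order term of the exponential, so no auxiliary parameter $s$ and no optimization step are needed. Your argument is the primal Otto--Villani version of the same idea: perturb $\mu$ to $\nu_\varepsilon=(1+\varepsilon f)\mu$, bound $\mathcal{T}_{c_a}(\mu,\nu_\varepsilon)$ from below via Kantorovich duality with test function $\varepsilon s f$, and optimize in $s$. Both proofs ultimately rest on exactly the same inf-convolution asymptotic, but the Bobkov--G\"otze route packages the duality and the optimization in one stroke and is correspondingly shorter. One caution on your write-up: the Kantorovich lower bound you wrote, $\mathcal{T}_{c_a}(\mu,\nu_\varepsilon)\ge\int g\,d\nu_\varepsilon-\int\tilde Qg\,d\mu$, is not valid as stated (the pair $(g,-\tilde Qg)$ is not admissible); the correct version, using symmetry of the cost, is $\mathcal{T}_{c_a}(\mu,\nu_\varepsilon)\ge\int\tilde Qg\,d\nu_\varepsilon-\int g\,d\mu$, which indeed yields $s\,\mathrm{Var}_\mu(f)-\tfrac{s^2}{4}\int|\nabla^-f|^2\,d\mu$ after expansion and matches your final displayed inequality. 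Also note that you only need the one-sided bound $\limsup(g-\tilde Qg)/\varepsilon^2\le \tfrac{s^2}{4}|\nabla^-f|^2$ (not two-sided convergence) to push the argument through, which is exactly what the domination by $\mathrm{Lip}(f,r)$ provides.
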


\subsection{Transport-entropy inequalities as restricted log-Sobolev inequalities}

A second consequence of  the Hamilton-Jacobi approach on metric spaces is a characterization of transport-entropy inequalities in terms of log-Sobolev inequalities restricted to a certain class of functions depending on the cost function $\alpha$.

To be more precise, let us say that a function $f$ is \emph{$c$-convex} with respect to a cost function $(x,y)\mapsto c(x,y)$ defined on $X\times X$ if there is a function $g:X\to\R\cup\{\pm\infty\}$ such that 
$$f(x)=P_c g(x) = \sup_{y\in X} \{g(y)-c(x,y)\} \in \R\cup\{\pm\infty\},\qquad \forall x\in X.$$
The class of $c$-convex functions is intimately related to optimal-transport, via for instance the Kantorovich duality theorem (see e.g \cite{Villani-book}).

An important case is when $c(x,y)=\frac{1}{2}\|x-y\|_2^2$ on $\R^m$ (see Proposition \ref{examples} below). In this case, a function $f:\R^m \to\R$ is $c$-convex if and only if the function $x\mapsto f(x)+\|x\|_{2}^2/2$ is convex on $\R^m$. If $f$ is of class $\mathcal{C}^2$, this amounts to say that $\mathrm{Hess}\, f \geq -\mathrm{Id}$.\\

In what follows, we consider the cost $c_p(x,y)=d^p(x,y)/p,$ $p\geq 2$. The second main result of this paper is the following
\begin{thm}\label{main result}
Let $\mu$ be a probability measure on a geodesic space $(X,d)$ and $p\geq 2$. The following properties are equivalent:
\begin{enumerate}
\item There is some $C>0$ such that $\mu$ verifies ${\bf T}_p(C)$.\\
\item There is some $D>0$ such that $\mu$ verifies the following $(\tau)$-log-Sobolev inequality: for all bounded continuous $f$ and all $0<\lambda<1/D$, it holds
$$\ent_\mu(e^f)\leq \frac{1}{1-\lambda D} \int (f-Q^\lambda f)e^f\,d\mu,$$
where for all $\lambda>0$, $Q^\lambda f(x)=\inf_{y\in X}\left\{f(y) +\lambda c_{p}(x,y)\right\}.$\\
\item There is some $E>0$ such that $\mu$ verifies the following restricted log-Sobolev inequality: for all $Kc_p$-convex function $f$, with $0<K<1/E$ it holds 
$$\ent_{\mu}(e^f)\leq \frac{\beta_p(u)-1}{pK^{q-1} (1-KEu)}\int  |\nabla^+ f|^{q}e^f\,d\mu,\qquad \forall u\in(1,1/(KE)),$$
where $q=p/(p-1)$ and $\beta_p(u)= \frac u{[u^{1/(p-1)}-1]^{p-1}}$ for all $u>1.$
\end{enumerate}
The optimal constants $C_\mathrm{opt},D_\mathrm{opt},E_\mathrm{opt}$ are related as follows
$$E_\mathrm{opt} \leq D_\mathrm{opt}\leq C_\mathrm{opt}\leq \kappa_p E_\mathrm{opt},$$ where $\kappa_p$ is some universal constant depending only on $p.$ For $p=2$, one can take $\kappa_2 = e^2.$
\end{thm}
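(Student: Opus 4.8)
The plan is to establish the three implications $(1)\Rightarrow(2)\Rightarrow(3)\Rightarrow(1)$ together with the quantitative control on the constants, exploiting the Hamilton-Jacobi machinery from Theorems \ref{HJ}, \ref{HJ 1} and the hypercontractivity statement, Theorem \ref{thm-hyper}, applied with $\alpha(h)=h^p/p$. Note that for this $\alpha$ one has $r_\alpha=p_\alpha=p$, so the exponent $k(t)$ in \eqref{hypercont} simplifies considerably and degenerates to a single power.

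\textbf{Step 1: $(1)\Rightarrow(2)$.} I would use the Bobkov--G\"otze dual formulation of ${\bf T}_p(C)$: the inequality ${\bf T}_p(C)$ holds if and only if for every bounded continuous $f$, $\int e^{\frac1C Q^1 f}\,d\mu \cdot e^{-\frac1C\int f\,d\mu}\le 1$ where $Q^1 f=Q_1 f$ for $\alpha=c_p$ (this is exactly the $k(0)=0$, $t_o=C(p-1)$ endpoint of the hypercontractivity family). More robustly, I would invoke the general principle — already implicit in the discussion preceding Theorem \ref{thm-hyper} and in \cite{BGL01,GRS12} — that ${\bf T}_c(C)$ is equivalent to the family of inequalities $\ent_\mu(e^g)\le \frac{1}{1-\lambda}\int(g - \lambda^{-1}\text{-rescaled infimal convolution})e^g d\mu$. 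Concretely, from ${\bf T}_p(C)$ one derives the $(\tau)$-log-Sobolev inequality with $D=C$ by differentiating the Bobkov--G\"otze functional inequality in a scaling parameter; the function $Q^\lambda f(x)=\inf_y\{f(y)+\lambda c_p(x,y)\}$ plays the role of the Hopf--Lax solution at a rescaled time, and the scaling $c_p(tx,ty)=t^p c_p(x,y)$ converts the time-derivative in Theorem \ref{HJ 1} into a $\lambda$-derivative. This gives $D_\mathrm{opt}\le C_\mathrm{opt}$.

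\textbf{Step 2: $(2)\Rightarrow(3)$.} Here I would take $f$ to be $Kc_p$-convex, say $f=P_{Kc_p}g$, and apply the $(\tau)$-log-Sobolev inequality from (2). The point is that for a $c_p$-convex function one can run the semigroup \emph{backwards}: there is an identity (or at least an inequality in the right direction) relating $f-Q^\lambda f$ to $|\nabla^+ f|^q$ for $c_p$-convex $f$, because along the geodesic toward the point $\bar y$ realizing the infimal convolution, the slope is controlled by $d(x,\bar y)^{p-1}$, and Theorem \ref{HJ 1} gives $\frac{d}{dt_+}P_t = \beta(\cdot)$ exactly. For $f=P_{Kc_p}g$ one has, writing things in terms of the parameter, $f-Q^\lambda f \le \Phi(\lambda,K)\,|\nabla^+ f|^q$ pointwise for a suitable explicit function $\Phi$; optimizing the resulting bound $\ent_\mu(e^f)\le \frac{1}{1-\lambda D}\int (f-Q^\lambda f)e^f d\mu$ over $\lambda\in(0,1/D)$ produces the stated constant $\frac{\beta_p(u)-1}{pK^{q-1}(1-KEu)}$ with $E=D$ after the substitution $u = 1/\lambda^{?}$ matching $\beta_p$. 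This yields $E_\mathrm{opt}\le D_\mathrm{opt}$. The reason the restriction to $c_p$-convex functions is essential is that only for such $f$ does the pointwise comparison between the infimal-convolution deficit and the slope go in the favorable direction.

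\textbf{Step 3: $(3)\Rightarrow(1)$, and this is the main obstacle.} The hard part is recovering the \emph{full} transport inequality from a log-Sobolev inequality that is only assumed for the restricted class of $Kc_p$-convex functions. The strategy, following \cite{GRS12} and the Hamilton--Jacobi/hypercontractivity route of \cite{BGL01,LV07}, is: given \emph{any} bounded continuous $h$, its Hopf--Lax evolution $Q_th$ (for the cost $c_p$) becomes, after suitable rescaling, a $Kc_p$-convex function — indeed $Q_t h = Q^{1/t^{p-1}}h$ up to the geodesic-space semigroup identity, and $Q^\lambda h$ is $\lambda c_p$-convex by construction. So one applies the restricted inequality (3) to $f_t := \log$-type transforms of $Q_t h$ along a one-parameter family, uses Theorem \ref{HJ} (specifically \eqref{AGS}, valid since $(X,d)$ is geodesic so we have the \emph{equality} \eqref{eq HJ bis}) to bound $\frac{d}{dt_+}\int e^{k(t)Q_th}d\mu$, and runs the Gronwall/monotonicity argument of Theorem \ref{thm-hyper} to obtain hypercontractivity of $\{Q_t\}$ along this restricted family; the endpoint $k(0)=0$ then gives the Bobkov--G\"otze dual form of ${\bf T}_p(C)$ with $C\le \kappa_p E$. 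Tracking constants through the ODE $k'(t) = \frac{1}{E k(t)}$-type relation with the boundary matching at $u\in(1,1/(KE))$ is where the universal factor $\kappa_p$ enters; for $p=2$ one checks by an explicit computation (the ODE becomes affine and $\beta_2(u)=u/(\sqrt u-1)^2$... wait, $\beta_2(u)=u/(u-1)$ since $p-1=1$, actually $\beta_2(u)=u/(u-1)$) that $\kappa_2=e^2$ suffices, reproducing the classical Otto--Villani constant. I expect the delicate points to be: (a) verifying that the rescaled Hopf--Lax functions genuinely land in the $Kc_p$-convex class with the correct parameter $K$ as $t$ ranges over the relevant interval, so that (3) is applicable throughout the flow; (b) justifying the differentiation under the integral sign and the use of the one-sided HJ equality from Theorem \ref{HJ} at \emph{every} point (which is exactly what makes the measure-free formulation of Theorem \ref{HJ} indispensable here); and (c) the bookkeeping that turns the optimization parameter $u$ and the exponent $\beta_p(u)$ into the final comparison $C_\mathrm{opt}\le\kappa_p E_\mathrm{opt}$.
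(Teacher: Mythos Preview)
Your outline for Steps 1 and 2 is broadly on track, though the paper handles them slightly differently. For $(1)\Rightarrow(2)$ the paper does not differentiate the Bobkov--G\"otze inequality; it invokes directly a one-line Jensen argument from \cite{GRS11}, yielding $D_{\mathrm{opt}}\le C_{\mathrm{opt}}$. For $(2)\Rightarrow(3)$ the paper proves the pointwise bound (Lemma \ref{adieupec})
\[
f(x)-Q^\lambda f(x)\le (\beta_p(\lambda/K)-1)\,\frac{1}{pK^{q-1}}|\nabla^-_{Kc_p}f|^q(x)
\]
for $Kc_p$-convex $f$ with $K<\lambda$, via the subdifferential inequality and the triangle inequality; the substitution is $u=\lambda/K$, and in the geodesic case $|\nabla^-_{Kc_p}f|\le|\nabla^+_{Kc_p}f|=|\nabla^+f|$ by Proposition \ref{Gradients comparisons}(2). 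Your vague ``$u=1/\lambda^{?}$'' would not close, but the structure you describe is right.

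The genuine gap is in Step 3. You propose to apply the restricted inequality (3) to $Q_t h$ (or $Q^\lambda h$), asserting that ``$Q^\lambda h$ is $\lambda c_p$-convex by construction''. It is not: $Q^\lambda h(x)=\inf_y\{h(y)+\lambda c_p(x,y)\}$ is by definition $\lambda c_p$-\emph{concave}, whereas (3) is assumed only for $Kc_p$-\emph{convex} functions, i.e.\ functions of the form $P_c g=\sup_y\{g(y)-c(x,y)\}$. So the restricted log-Sobolev inequality simply does not apply to your test functions, and the whole monotonicity argument collapses. The paper's proof of $(3)\Rightarrow(1)$ (Theorem \ref{logverstrans}) runs the \emph{sup}-convolution semigroup $P_tg(x)=\sup_y\{g(y)-t^{-(p-1)}c_p(x,y)\}$ instead: then $f_t:=C^{-1}\ell(t)P_t g$ is $K(t)c_p$-convex with $K(t)=\ell(t)/(Ct^{p-1})$, and (3) is legitimately applicable. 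Theorem \ref{HJ 1} gives $\frac{d}{dt_+}P_tg(x)=\frac{p-1}{t^p}\max_{\bar y\in m(t,x)}c_p(x,\bar y)$, and in the geodesic case Proposition \ref{Gradients comparisons}(2) yields the \emph{equality}
\[
\frac{1}{pK(t)^{q-1}}|\nabla^+f_t|^q(x)=\frac{t\ell(t)}{(p-1)C}\,\frac{dP_tg}{dt_+}(x),
\]
not merely an inequality. One then chooses a \emph{decreasing} function $\ell$ on $[a_p,1]$ with $\ell(1)=0$ solving the ODE $\theta_p(\ell(t)/t^{p-1})\frac{t}{p-1}+\ell(t)/\ell'(t)=0$, where $\theta_p(x)=\inf_{1<u<1/x}\frac{\beta_p(u)-1}{1-xu}$, so that $H_g(t)=\frac{C}{\ell(t)}\log\int e^{C^{-1}\ell(t)P_tg}d\mu$ is non-decreasing; the endpoint $t\to1^-$ gives the Bobkov--G\"otze dual form of ${\bf T}_p(C/\ell(a_p))$, with $\kappa_p=1/\ell(a_p)=\exp\big(\int_0^1\frac{\theta_p(s)}{s(\theta_p(s)+1)}ds\big)$ and $\kappa_2=e^2$ since $\theta_2(x)=4x/(1-x)^2$. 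The direction of the flow (increasing $t$, decreasing $\ell$, $P_t$ rather than $Q_t$) and the specific nonlinear ODE are the ingredients you are missing.
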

Let us make some comments on Theorem \ref{main result}.
\begin{itemize}
\item The implication $(1)\Rightarrow (2)$ is true for any cost function $c$. It was first proved in \cite{GRS11}.
\item In \cite{GRS12}, we proved that (1) is equivalent to (2) for cost functions $c(x,y)=\alpha(d(x,y))$ as soon as $\alpha$ verifies the $\Delta_2$-condition. Our proof (in \cite{GRS12}) makes use of a tensorization technique and is thus rather different from the one presented here.
\item In \cite{GRS11}, we proved that (1) is equivalent to (3) in a framework essentially Euclidean: $X=\R^m$ and $c(x,y)=\frac{1}{2}\|x-y\|_2^2$. 
\end{itemize}
Theorem \ref{main result} thus provides a wide extension of the results in \cite{GRS11} and unifies nicely the results of \cite{GRS11} and \cite{GRS12}. 

Let us mention that Theorem \ref{main result} as stated above is not as general as possible. Indeed, we will see in Section 5 that this equivalence is still true when the space is not geodesic (Theorem \ref{main result improved}). In this more general framework (3) has to be replaced by a slightly weaker version of the restricted log-Sobolev inequality.  The main tool to prove this extension is Theorem \ref{HJ 1}. It would also be possible to consider more general costs of the form $c(x,y)=\alpha(d(x,y))$ with $\alpha$ satisfying the $\Delta_2$-condition but, to avoid some lengthy developpements, this will not be treated here.

We end this introduction with a short roadmap of the paper.
Section 2 is devoted to $c$-convex functions. In particular, we will recall and prove some well known facts about the subdifferential $\partial_cf(x)$ of a $c$-convex function. In Proposition \ref{Gradients comparisons}, we will relate their gradients $|\nabla^\pm f|(x)$ to the minimal or maximal distance between $x$ and the subdifferential $\partial_cf(x).$
Section 3 contains the proof of the HLO formula. In Section 4, we prove the hypercontractivity property of Theorem \ref{hypercont}, and deduce as a corollary the Otto-Villani Theorem \ref{Otto-Villani}. Section 5 contains the proof of an improved version of our main result Theorem \ref{main result}. Finally, the appendix gathers some technical results.

\newpage 

\tableofcontents

\section{About $c$-convex functions}

In this section we introduce the somehow classical notions of $c$-convex (and $c-$concave) functions and of $c$-subdifferential. We will also give several useful facts about these notions. The interested reader may find more results and comments, and some bibliographic notes, in  \cite[Chapter 5]{Villani-book}.

\subsection{Definition of $c$-convex functions and first results}
Let $X,Y$ be two polish spaces and $c:X\times Y\to\R$ be a general cost function and set $\overline{\R}=\R\cup\{\pm \infty\}.$
For any function $f:X\to \overline{\R}$, we define $Q_cf:Y\to \overline{\R}$ by
$$Q_c f(y):= \inf_{x\in X}\{f(x)+c(x,y)\}.$$
For any function $g:Y\to \overline{\R}$, we define $P_cg:X\to \overline{\R}$, by
$$P_c g(x):= \sup_{y\in Y}\{g(y)-c(x,y)\}.$$

\begin{defi}[$c$-convex and $c$-concave functions]
A function $f:X\to\overline{\R}$ is said to be \emph{$c$-convex} if there is some function $g:Y\to \overline{\R}$ such that $f=P_c g.$
A function $g:Y\to \overline{\R}$ is said to be \emph{$c$-concave} if there is some function $f:X \to \overline{\R}$ such that $g = Q_c f.$
\end{defi}
In the definition above, we follow the convention of Villani's book for $c$-convex functions \cite{Villani-book}. Other authors as Rachev and R\"uschendorf \cite{RR-book} define $c$-convex functions as those functions $f:X\to\overline{\R}$ such that there is some function $g:Y \to \overline{\R}$ such that $f(x)=\sup_{y\in Y}\{g(y) + c(x,y)\}.$

\begin{prop}
For any function $f:X\to\overline{\R}$, the inequality $P_cQ_cf\leq f$ holds. 
Moreover, $f:X \to \overline{\R}$ is $c$-convex if and only if $P_c Q_cf =f.$
\end{prop}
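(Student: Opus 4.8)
The plan is to treat the whole statement as a formal consequence of the order-theoretic adjunction between $P_c$ and $Q_c$: both operators are monotone for the pointwise order, and the two transforms are ``dual'' in the sense that sending a function through $Q_c$ then $P_c$ (or $P_c$ then $Q_c$) can only move it in one definite direction. Since the cost $c$ takes values in $\R$, no indeterminate expression $\infty-\infty$ ever arises, so the manipulations with functions valued in $\overline{\R}$ are harmless.

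First I would establish the inequality $P_cQ_cf\le f$. By the very definition of $Q_c$, for every $x\in X$ and every $y\in Y$ one has $Q_cf(y)\le f(x)+c(x,y)$, hence $Q_cf(y)-c(x,y)\le f(x)$; taking the supremum over $y\in Y$ gives $P_cQ_cf(x)\le f(x)$. The cases $f(x)=\pm\infty$ are dealt with directly: if $f(x_0)=-\infty$ for some $x_0$ then $Q_cf\equiv-\infty$ on $Y$ and hence $P_cQ_cf\equiv-\infty$, while if $f(x)=+\infty$ there is nothing to prove.

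For the equivalence, the implication ``$P_cQ_cf=f\ \Rightarrow\ f$ is $c$-convex'' is immediate from the definition, taking $g:=Q_cf:Y\to\overline{\R}$, so that $f=P_cg$. For the converse, I would assume $f=P_cg$ for some $g:Y\to\overline{\R}$ and perform the symmetric computation showing $Q_cP_cg\ge g$: indeed $P_cg(x)\ge g(y)-c(x,y)$ for all $x,y$, i.e.\ $P_cg(x)+c(x,y)\ge g(y)$, and taking the infimum over $x\in X$ yields $Q_cP_cg(y)\ge g(y)$. Since $P_c$ is order-preserving, applying it to the inequality $Q_cf=Q_cP_cg\ge g$ gives $P_cQ_cf\ge P_cg=f$; combined with the reverse inequality already proved, this yields $P_cQ_cf=f$.

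There is essentially no obstacle here: the argument is a two-line Galois-connection computation, and the only point requiring a little care is the bookkeeping for functions taking the values $\pm\infty$, which is innocuous precisely because $c$ is real-valued.
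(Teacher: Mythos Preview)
Your proof is correct and follows essentially the same argument as the paper's: both establish $P_cQ_cf\le f$ by bounding $Q_cf(y)-c(x,y)\le f(x)$ and taking the supremum in $y$, and both prove the converse direction by observing that $f=P_cg$ forces $g\le Q_cf$ and then applying the monotone operator $P_c$. Your explicit framing in terms of a Galois connection and your separate bookkeeping for the values $\pm\infty$ are a bit more detailed than the paper's presentation, but there is no substantive difference in method.
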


\begin{proof}
For the first point observe that; for $z=x$,
$$
P_c Q_c f(x)=\sup_{y\in Y} \inf_{z\in X} \{f(z)+c(z,y)-c(x,y)\}\leq f(x) .
$$
Let us prove the second point. Trivially, a function $f$ such that $f=P_cQ_cf$ is $c$-convex.
Conversely, if $f:X\to \overline{\R}$ is $c$-convex, then there is some function $g$ on $Y$ such that 
$f(x)=\sup_{y\in Y}\{g(y)-c(x,y)\}=Q_c g(y)$. Hence $g$ 
verifies $g(y)\leq \inf_{x\in X}\{f(x) + c(x,y)\}.$ Plugging this inequality into $f=P_{c}g$ gives $f\leq P_cQ_cf$.
Since the other direction always holds, the proof is complete.
\end{proof}

Recall that a function $f:\R^m \to \overline{\R}$ is said to be \emph{closed} (see \cite{Rock-book}) if either $f=-\infty$ everywhere or $f$ takes its values in $\R\cup\{+\infty\}$ and is lower semicontinuous. It is said to be \emph{convex} if its epigraph $\{(x,\alpha)\in \R^m\times \R : \alpha \geq f(x)\}$ is a convex subset of $\R^m \times \R.$ Let us denote by $\Gamma (\R^m)$ the set of all closed and convex functions on $\R^m.$

\begin{prop}[Examples]\label{examples}
Assume that $X=Y=\R^m,$ $m\in\N^*$, equipped with its standard Euclidean structure and let $f:\R^m\to \overline{\R}$.
Then,
\begin{enumerate}
\item If $c(x,y)=x\cdot y,$ $f$ is $c$-convex if and only if $f\in \Gamma(\R^m)$.
\item If $c(x,y)=\frac{1}{2}\|x-y\|_2^2$, $f$ is $c$-convex if and only if $f+\|\cdot\|_2^2/2 \in \Gamma(\R^m)$. In particular, if $f:\R^m \to \R$ is of class $\mathcal{C}^2$ then it is $c$-convex if and only if $\mathrm{Hess}\, f (x) \geq -\mathrm{Id},$ for all $x\in \R^m$.
\end{enumerate}
\end{prop}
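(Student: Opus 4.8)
The plan is to reduce both statements to the classical Fenchel--Moreau theorem: a function $f:\R^m\to\overline{\R}$ satisfies $f=f^{**}$, where $f^*(y)=\sup_{x}\{x\cdot y-f(x)\}$ denotes the Legendre transform, if and only if $f\in\Gamma(\R^m)$; equivalently, $f\in\Gamma(\R^m)$ if and only if $f$ is a supremum of affine functions, with the usual conventions that an empty supremum equals $-\infty$ and that both constant functions $+\infty$ and $-\infty$ belong to $\Gamma(\R^m)$. I would state this as the single external input and build everything on top of it.

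For (1), with $c(x,y)=x\cdot y$ one has $P_cg(x)=\sup_{y}\{g(y)-x\cdot y\}$, which is by inspection a supremum of affine functions of $x$, hence an element of $\Gamma(\R^m)$; this gives one implication. Conversely, if $f\in\Gamma(\R^m)$, then by Fenchel--Moreau $f(x)=f^{**}(x)=\sup_{y}\{x\cdot y-f^*(y)\}=\sup_{y}\{g(y)-x\cdot y\}$ with $g(y):=-f^*(-y)$ (after the substitution $y\mapsto -y$), so $f=P_cg$ is $c$-convex. The degenerate cases are dispatched directly: $f\equiv -\infty$ corresponds to $g\equiv-\infty$, and if $g$ takes the value $+\infty$ somewhere then $P_cg\equiv +\infty$, both of which lie in $\Gamma(\R^m)$.

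For (2), I would complete the square: $c(x,y)=\tfrac12\|x-y\|_2^2=\tfrac12\|x\|_2^2-x\cdot y+\tfrac12\|y\|_2^2$, whence for every $g$
$$P_cg(x)+\tfrac12\|x\|_2^2=\sup_{y}\Bigl\{\bigl(g(y)-\tfrac12\|y\|_2^2\bigr)+x\cdot y\Bigr\}.$$
Thus $f$ is $c$-convex if and only if $f+\tfrac12\|\cdot\|_2^2$ is a supremum of affine functions, i.e., by part (1) (the sign change $y\mapsto -y$ relating the costs $x\cdot y$ and $-x\cdot y$ being harmless), if and only if $f+\tfrac12\|\cdot\|_2^2\in\Gamma(\R^m)$. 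Concretely, given $\phi$ with $f(x)+\tfrac12\|x\|_2^2=\sup_{y}\{\phi(y)-x\cdot y\}$ one checks that $g(y):=\phi(-y)+\tfrac12\|y\|_2^2$ satisfies $P_cg=f$, and conversely.

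Finally, for the $\mathcal{C}^2$ addendum: if $f$ is $\mathcal{C}^2$ then $\psi:=f+\tfrac12\|\cdot\|_2^2$ is $\mathcal{C}^2$ and everywhere finite, hence automatically closed, so $\psi\in\Gamma(\R^m)$ is equivalent to convexity of $\psi$, which for a $\mathcal{C}^2$ function on $\R^m$ is equivalent to $\mathrm{Hess}\,\psi\geq 0$ everywhere, that is $\mathrm{Hess}\,f\geq -\mathrm{Id}$. The whole argument is classical convex analysis; the only points requiring a little care are the extended-real-valued bookkeeping (so that "$c$-convex" matches exactly "$\in\Gamma(\R^m)$", including the two constant functions $\pm\infty$) and the sign change produced by completing the square. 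I do not expect any genuine obstacle.
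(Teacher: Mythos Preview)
Your proposal is correct and follows essentially the same route as the paper: part~(1) is reduced to the Fenchel--Moreau theorem (the paper simply notes that $P_cg$ is a Legendre transform, hence in $\Gamma(\R^m)$, and conversely that $f\in\Gamma(\R^m)$ implies $f=f^{**}$), and part~(2) is obtained by completing the square to reduce to part~(1). Your write-up is somewhat more explicit than the paper's---you spell out the sign change $y\mapsto -y$, the treatment of the constant functions $\pm\infty$, and the $\mathcal{C}^2$ addendum (which the paper states but does not prove)---but the underlying argument is identical.
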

\proof\ \\
(1) By definition, a function $f$ is $c$-convex for $c(x,y)=x\cdot y$ if and only if $f=h^*$ for some function $h:\R^m \to \overline{\R}$. It is well known (and easy to check) that $h^* \in \Gamma(\R^m)$ for all $h$. Conversely, if $f\in \Gamma(\R^m)$ then $f=f^{**}$ (see e.g \cite{Rock-book}) and so $f$ is $c$-convex.\\
(2) The function $f$ is a $c$-convex function for $c(x,y)=\|x-y\|_2^2/2$ if and only if $f=P_c g$, for some $g:\R^m\to\overline{\R}.$ Since
$$f(x) + \frac{\|x\|_{2}^2}{2}=\sup_{y\in \R^m}\left\{x\cdot y - \left(\frac{\|y\|_{2}^2}{2}-g(y)\right)\right\},$$
the conclusion follows from the first point.
\endproof

\subsection{The $c$-subdifferential of a $c$-convex function}

In this section we define the notion of $c$-subdifferential of a $c$-convex function and derive some facts that will appear to be useful later.

\begin{defi}[$c$-subdifferential]\label{subdiff}
Let $f:X\to\overline{\R}$ be a $c$-convex function and $x\in X$; the $c$-subdifferential of $f$ at point $x$ is the set, denoted by $\partial_{c}f(x)\subset Y$, of the points $\bar{y} \in Y$ such that
$$f(z)\geq f(x)+c(x,\bar{y}) - c(z,\bar{y}),\qquad \forall z\in X.$$
\end{defi}

The next lemma gives a characterisation of the $c$-subdifferential.

\begin{lem}\label{attainment}
For all $x\in X$, $\partial_cf(x)$ is the set of points $y\in Y$ achieving the supremum in $f(x)=P_cQ_c f(x)$. More precisely, 
$$
\partial_c f(x) = \{ y\in Y : f(x)= Q_cf(y) - c(x,y)\}.
$$
More generally, if $f=P_c g$, for some function $g:Y\to \overline{\R}$, then 
$$
\{y\in Y : f(x)=g(y) - c(x,y)\} \subset \partial_c f(x).
$$
\end{lem}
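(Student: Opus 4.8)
The plan is to prove the three assertions of Lemma~\ref{attainment} in the natural order, starting from the most concrete one. First I would establish the final inclusion: suppose $f = P_c g$ and $f(x) = g(y) - c(x,y)$ for some $y \in Y$. For any $z \in X$ we have by definition of $P_c$ that $f(z) \geq g(y) - c(z,y)$, so subtracting $f(x) = g(y) - c(x,y)$ yields $f(z) - f(x) \geq c(x,y) - c(z,y)$, i.e. $f(z) \geq f(x) + c(x,y) - c(z,y)$ for all $z$. This is exactly the defining condition of Definition~\ref{subdiff}, so $y \in \partial_c f(x)$. This step is entirely routine.

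Next I would prove the characterization $\partial_c f(x) = \{ y : f(x) = Q_c f(y) - c(x,y)\}$. Since $f$ is $c$-convex, the previous proposition gives $f = P_c Q_c f$, so applying the inclusion just proved with $g = Q_c f$ shows that every $y$ with $f(x) = Q_c f(y) - c(x,y)$ lies in $\partial_c f(x)$. For the reverse inclusion, take $y \in \partial_c f(x)$; then $f(z) + c(z,y) \geq f(x) + c(x,y)$ for all $z \in X$, so taking the infimum over $z$ gives $Q_c f(y) \geq f(x) + c(x,y)$, i.e. $Q_c f(y) - c(x,y) \geq f(x)$. The opposite inequality $Q_c f(y) - c(x,y) \leq P_c Q_c f(x) = f(x)$ is immediate from the definition of $P_c$ (it is the one-line computation already used in the proof of $P_c Q_c f \leq f$). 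Hence $f(x) = Q_c f(y) - c(x,y)$, which also identifies $y$ as a point achieving the supremum in $f(x) = P_c Q_c f(x) = \sup_y \{ Q_c f(y) - c(x,y)\}$.

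I do not anticipate a serious obstacle here; the lemma is essentially a formal unwinding of the definitions, and the only subtlety is bookkeeping with values in $\overline{\R}$ — one should note that the inequalities and the infimum/supremum manipulations remain valid in $\R \cup \{\pm\infty\}$ with the usual conventions, and in particular that $f(x)$ is finite or the statement is vacuous at such $x$. The mildly delicate point is making sure the equality $Q_c f(y) - c(x,y) \leq f(x)$ is justified when infinities occur, but this follows from the already-established general inequality $P_c Q_c f \leq f$ together with the trivial bound $Q_c f(y) - c(x,y) \leq \sup_{y'}\{Q_c f(y') - c(x,y')\}$. I would organize the write-up so that the general inclusion is proved first as a standalone observation, then invoked twice: once with $g = Q_c f$ for the main characterization and once left in general for the last sentence of the statement.
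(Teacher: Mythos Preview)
Your proof is correct and follows essentially the same approach as the paper, which also reduces everything to unwinding the definitions of $P_c$, $Q_c$, and $\partial_c f$. The only differences are organizational: the paper leaves the characterization $\partial_c f(x) = \{y : f(x) = Q_c f(y) - c(x,y)\}$ to the reader and proves the general inclusion via the intermediate observation $g \leq Q_c f$, whereas you fill in the reader's part explicitly and prove the inclusion more directly from $f(z) \geq g(y) - c(z,y)$; both routes are equally valid and equally short.
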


\begin{proof} 
The first part of the lemma is simple and left to the reader. Let us prove the second part.
Since $f(x)=\sup_{y\in Y}\{g(y) - c(x,y)\}$, $x\in X$, we have $g\leq Q_c f$. So if,
$f(x)=g(\bar{y})-c(x,\bar{y})$ then $f(x)\leq Q_cf(\bar{y})-c(x,\bar{y}) \leq f(z) +c(z,\bar{y}) -c(x,\bar{y})$, for all $z\in X$ which proves that $\bar{y}\in \partial_cf(x).$
\end{proof}

\begin{lem}\label{not empty}
Suppose that the function $c:X\times Y\to \R$ is continuous and bounded from below and that, for all $x\in X$, the level sets $\{y\in Y; c(x,y) \leq r\}$, $r\in \R$, are compact. If $f:X\to \R\cup\{-\infty\}$ is a $c$-convex function bounded from above, then $\partial_cf(x)\neq \emptyset$ for all $x\in X.$
\end{lem}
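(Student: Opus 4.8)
The goal is to show that for a $c$-convex function $f:X\to\R\cup\{-\infty\}$ bounded from above, the $c$-subdifferential $\partial_c f(x)$ is nonempty at every point $x$. The natural strategy is to produce a maximizer in the variational problem $f(x) = \sup_{y\in Y}\{Q_c f(y) - c(x,y)\}$, since by Lemma \ref{attainment} any such maximizer lies in $\partial_c f(x)$. So the plan is to fix $x\in X$, take a maximizing sequence $(y_n)$ for $y\mapsto Q_c f(y) - c(x,y)$, show this sequence is contained in a compact set, extract a convergent subsequence $y_n\to\bar y$, and verify that $\bar y$ attains the supremum by an upper semicontinuity argument.

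First I would record that $Q_c f$ is not identically $-\infty$ (indeed $f$ is $c$-convex, so $f = P_c Q_c f$; if $Q_c f\equiv -\infty$ then $f\equiv -\infty$, contradicting... actually $f\equiv-\infty$ is allowed, but then $\partial_c f(x) = Y\neq\emptyset$ trivially since the defining inequality is vacuous — so we may assume $f\not\equiv-\infty$, hence $Q_c f\not\equiv-\infty$ and $f(x)\in\R$ for the relevant $x$; here one should be slightly careful and treat $f(x)=-\infty$ separately as well, where again the subdifferential inequality holds vacuously). Next, the key compactness step: since $f$ is bounded from above by some constant $M$, we have $Q_c f(y) \le M + c(x_0,y)$ for any fixed reference point — more directly, for the maximizing sequence, $Q_c f(y_n) - c(x,y_n) \to f(x) > -\infty$, and since $Q_c f(y_n) = \inf_z\{f(z)+c(z,y_n)\} \le f(z_0) + c(z_0, y_n)$ for a fixed $z_0$ with $f(z_0)>-\infty$, we get $c(z_0,y_n) - c(x,y_n) \ge Q_cf(y_n) - c(x,y_n) - f(z_0) \to f(x) - f(z_0)$, so $c(z_0,y_n) - c(x,y_n)$ is bounded below; combined with the lower bound on $c$ this controls $c(x,y_n)$ from above (using that $c(z_0,\cdot)\ge \inf c$), placing $(y_n)$ in a level set $\{y : c(x,y)\le r\}$, which is compact by hypothesis. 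Extract $y_n\to\bar y$.

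Finally, I would check that $\bar y$ is a maximizer. The function $y\mapsto c(x,y)$ is continuous, so $c(x,y_n)\to c(x,\bar y)$. For $Q_c f$: it is an infimum of the continuous functions $y\mapsto f(z)+c(z,y)$, hence upper semicontinuous, so $\limsup_n Q_c f(y_n) \le Q_c f(\bar y)$. Therefore $f(x) = \lim_n [Q_c f(y_n) - c(x,y_n)] = \limsup_n Q_c f(y_n) - c(x,\bar y) \le Q_c f(\bar y) - c(x,\bar y) \le P_c Q_c f(x) = f(x)$, so equality holds throughout and $\bar y$ attains the supremum; by Lemma \ref{attainment}, $\bar y\in\partial_c f(x)$. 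The main obstacle is the compactness step: one must extract the right comparison inequality so that the lower bound on $c$ actually converts the convergence of $Q_cf(y_n)-c(x,y_n)$ into an upper bound on $c(x,y_n)$ alone — this requires picking a point $z_0$ where $f$ is finite (available since we reduced to $f\not\equiv-\infty$) and carefully using $\inf c > -\infty$.
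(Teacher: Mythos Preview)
Your strategy matches the paper's: show that $\varphi_x(y):=Q_cf(y)-c(x,y)$ attains its supremum by combining upper semicontinuity of $Q_cf$ (an infimum of continuous functions) with a compactness argument, and then invoke Lemma~\ref{attainment}. The upper-semicontinuity step and the limit passage at the end are both correct.

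The compactness step, however, has a genuine gap. You derive that $c(z_0,y_n)-c(x,y_n)\ge \varphi_x(y_n)-f(z_0)$ is eventually bounded below, and then claim that this, together with $c(z_0,\cdot)\ge\inf c$, bounds $c(x,y_n)$ from above. It does not: from $c(x,y_n)\le c(z_0,y_n)-A$ and a \emph{lower} bound on $c(z_0,y_n)$ nothing whatsoever follows about how large $c(x,y_n)$ can be --- you would need an \emph{upper} bound on $c(z_0,y_n)$, which your argument does not supply. The detour through a reference point $z_0$ with $f(z_0)>-\infty$ is therefore unhelpful. The paper's route is more direct: use the upper bound on $f$ itself. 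Since $f\le\sup f$, one has
\[
Q_cf(y)=\inf_{z}\{f(z)+c(z,y)\}\le \sup f+\inf_{z} c(z,y),
\]
so that $\varphi_x(y)\ge r$ forces $c(x,y)\le Q_cf(y)-r\le \sup f+\inf_{z} c(z,y)-r$. This immediately traps $y$ in a compact sublevel set of $c(x,\cdot)$ once $\inf_{z} c(z,y)$ is bounded in $y$ (as in the paper's main case $c(x,y)=\alpha(d(x,y))$, where $\inf_{z} c(z,y)=\alpha(0)=0$). Your handling of the degenerate cases $f\equiv-\infty$ and $f(x)=-\infty$ is fine but unnecessary once the argument is organized this way.
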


\begin{proof}
The function $Q_cf$ is an infimum of continuous functions on $Y$, so it is upper semicontinuous on $Y$. For all $x\in X$, the function $\varphi_x: y\mapsto Q_cf(y)-c(x,y)$ is thus upper semicontinuous on $Y$. Since $f$ is bounded from above and $c$ from below, the function $\varphi_x$ is bounded from above. Finally if $y\in \{\varphi_x\geq r\}$ then $c(x,y)\leq \sup f + \inf_{z} c(x,z) - r$. Hence $\{\varphi_x\geq r\}$ is compact. From this follows that $\varphi_x$ achieves its supremum at some point $\bar{y}$ which, according to Lemma \ref{attainment}, necessarily belongs to $\partial_cf(x)$.
\end{proof}

For a better understanding of the notion, in the next lemma we express the $c$-subdifferential of a $c$-convex function $f$ in term of its gradient in some simple cases.

\begin{lem}\label{bysarko1} 
Suppose that $X=Y=\R^m$ and that $c(x,y)=L(x-y)$ where $L:\R^m\to\R^+$ is a differentiable convex function
 with superlinear growth, i.e $L(x)/\|x\|\to +\infty$ when $x\to \infty$, where $\|\cdot\|$ denotes any norm on $\R^m$.
Let $f$ be a $c$-convex function bounded from above differentiable at some point $x$. Then 
$$
\partial_c f(x)= \{x-\nabla (L^*)(-\nabla f(x))\},
$$
where $L^*(y)=\sup_{x\in \R^m}\{x\cdot y -L(y)\}$ is the Fenchel-Legendre transform of $L$. 
\end{lem}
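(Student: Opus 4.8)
The plan is to use the characterization of $\partial_c f(x)$ from Lemma~\ref{attainment} together with the differentiability of $f$ at $x$ and the convexity and smoothness of $L$. First I would note that since $f$ is $c$-convex and bounded from above, and $c(x,y)=L(x-y)$ is continuous with $L$ of superlinear growth (so the level sets $\{y : L(x-y)\leq r\}$ are compact), Lemma~\ref{not empty} applies and $\partial_c f(x)\neq\emptyset$. So pick $\bar y\in\partial_c f(x)$; by Definition~\ref{subdiff},
$$f(z)\geq f(x)+L(x-\bar y)-L(z-\bar y),\qquad\forall z\in\R^m.$$
Set $z=x+th$ for $h\in\R^m$ and $t>0$, rearrange, divide by $t$, and let $t\to 0^+$. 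Using that $f$ is differentiable at $x$ and $L$ is differentiable, this yields $\nabla f(x)\cdot h \geq -\nabla L(x-\bar y)\cdot h$ for every $h$, hence $\nabla f(x) = -\nabla L(x-\bar y)$, i.e. $\nabla L(x-\bar y)=-\nabla f(x)$.

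Next I would invert this relation using the standard duality for the Fenchel--Legendre transform: since $L$ is convex, differentiable, and has superlinear growth, $L^*$ is finite everywhere, convex, and differentiable (smoothness of $L^*$ follows from strict... well, from the fact that $L$ being finite everywhere makes $L^*$ superlinear, and $L$ differentiable makes $L^*$ essentially smooth/differentiable — I would cite \cite{Rock-book}), and one has the equivalence $v=\nabla L(u)\iff u=\nabla L^*(v)$. Applying this with $u=x-\bar y$ and $v=-\nabla f(x)$ gives $x-\bar y=\nabla L^*(-\nabla f(x))$, that is,
$$\bar y = x-\nabla L^*(-\nabla f(x)).$$
Since $\bar y\in\partial_c f(x)$ was arbitrary, this shows $\partial_c f(x)\subset\{x-\nabla L^*(-\nabla f(x))\}$; combined with $\partial_c f(x)\neq\emptyset$ we get equality.

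The main obstacle, or rather the only delicate point, is justifying the differentiability of $L^*$ and the bi-implication $v=\nabla L(u)\iff u=\nabla L^*(v)$ in full rigor — this requires invoking the theory of Legendre-type convex functions (a differentiable convex function with superlinear growth on all of $\R^m$ is of Legendre type, so the gradient map is a bijection $\R^m\to\R^m$ with inverse $\nabla L^*$). Everything else — the first-order condition from the subdifferential inequality and the non-emptiness — is routine given the results already established. An alternative, slightly more self-contained route avoiding a direct appeal to Legendre duality: from $\nabla L(x-\bar y)=-\nabla f(x)$, observe that $x-\bar y$ is a minimizer of $u\mapsto L(u)+\nabla f(x)\cdot u$ (by convexity the vanishing gradient is sufficient), and by definition of $L^*$ the set of such minimizers is exactly $\partial L^*(-\nabla f(x))$, which is the singleton $\{\nabla L^*(-\nabla f(x))\}$ precisely when $L^*$ is differentiable at $-\nabla f(x)$; one then argues differentiability of $L^*$ holds everywhere because $L$ is strictly convex along the relevant directions, or simply cites \cite{Rock-book}. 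I would present the first route for brevity.
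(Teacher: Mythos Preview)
Your proposal is correct and follows essentially the same path as the paper: differentiate the subdifferential inequality along rays $z=x+th$ to obtain $\nabla L(x-\bar y)=-\nabla f(x)$, then invert via Legendre duality. The paper's inversion step is in fact your ``alternative route'' (it shows the Young inequality $L(v_o)+L^*(u_o)\geq u_o\cdot v_o$ is saturated and then differentiates $u\mapsto u\cdot v_o-L^*(u)$ at $u_o$), and it omits the explicit non-emptiness argument that you correctly supply via Lemma~\ref{not empty}.
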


We recall that if $L$ is strictly convex and has a superlinear growth, then its Fenchel-Legendre transform is differentiable everywhere \cite{Rock-book}. 
Lemma \ref{bysarko1} is well known. However, for the sake of completeness, we will recall its proof in the appendix.

\subsection{Comparisons of gradients}
In this last section, as in the rest of the paper, we will assume that $(X,d)$ is a complete separable metric space in which closed balls are compact.
We take $Y=X$ and we consider a cost function $c$ on $X \times X$ of the form $$c(x,y)=\alpha (d(x,y)),$$
where $\alpha:\R^+\to\R^+$ is an increasing convex function of class $\mathcal{C}^1$ such that $\alpha(0)=0$.


If $f:X\to\R$ is $c$-convex for the cost $c(x,y)=\alpha(d(x,y))$, we introduce the following quantities
$$|\nabla_{c}^-f|(x)=\alpha'\left( \inf_{\bar{y}\in \partial_{c}f(x)} d(x,\bar{y})\right)\qquad \text{and}\qquad|\nabla_{c}^+f|(x)=\alpha'\left( \sup_{\bar{y}\in \partial_{c}f(x)} d(x,\bar{y})\right).$$
The following proposition compares $|\nabla^\pm_{c}f|$ to $|\nabla^\pm f|$ defined in \eqref{nabla pm}.

\begin{prop}\label{Gradients comparisons}
Let $f:X\to\R$ be a $c$-convex function for the cost $c(x,y)=\alpha(d(x,y))$. Suppose that $f=P_{c}g$ for some upper semicontinuous function $g:X\to \R$ bounded from above and consider for all $x\in X$ the set $m(x)$ defined by $m(x)=\{ y \in X : f(x)=g(y)-\alpha(d(x,y))\}.$ 
\begin{enumerate}
\item The following inequalities hold
$$ |\nabla^+ f|(x)\leq \alpha'(\max_{\bar{y} \in m(x)} d(x,\bar{y}))\leq |\nabla_{c}^+f|(x).$$
\item If $(X,d)$ is a geodesic space, then
$$ |\nabla^+ f|(x)= \alpha'(\max_{\bar{y} \in m(x)} d(x,\bar{y}))= |\nabla_{c}^+f|(x).$$
\item The following inequalities hold
$$|\nabla^-f|(x)\leq |\nabla_{c}^-f|(x)\leq \alpha' (\min_{\bar{y}\in m(x)} d(x,\bar{y})).$$
\end{enumerate}
\end{prop}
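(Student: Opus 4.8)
The plan is to prove each of the three items by carefully unwinding the definitions of the local slopes $|\nabla^{\pm}f|$ and the $c$-subdifferential $\partial_c f$, exploiting the fact that any $\bar y \in m(x)$ lies in $\partial_c f(x)$ (Lemma \ref{attainment}) and that $\partial_c f(x)$ is nonempty and compact (Lemma \ref{not empty}, together with the compactness of closed balls). Throughout I abbreviate $d(x,\bar y)$ and use monotonicity and convexity of $\alpha$ (hence monotonicity of $\alpha'$).

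First I would treat the upper bound in item (1). Fix $x$ and let $\bar y \in m(x)$ achieve $\max_{\bar y\in m(x)} d(x,\bar y)=:R$ (the max is attained since $m(x)$ is compact — this needs a short argument using upper semicontinuity of $g$ and properness of $c$, just as in Lemma \ref{not empty}). Since $\bar y \in \partial_c f(x)$, for every $z$ we have $f(z) \geq f(x) + \alpha(d(x,\bar y)) - \alpha(d(z,\bar y))$; taking $z=y$ near $x$ and using the triangle inequality $d(y,\bar y) \geq d(x,\bar y) - d(x,y)$ together with convexity of $\alpha$ gives $f(y) - f(x) \geq \alpha(R) - \alpha(R - d(x,y)) \geq \alpha'(R-d(x,y))\, d(x,y)$ (or a cleaner one-sided estimate), so dividing by $d(x,y)$ and letting $y\to x$ yields $|\nabla^+ f|(x) \geq \alpha'(R^-)$. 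Wait — the inequality we want is $|\nabla^+ f|(x)\le \alpha'(R)$, so instead I would bound $[f(y)-f(x)]_+$ from above: using $f(x)=g(\bar y)-\alpha(d(x,\bar y))$ for a maximizing $\bar y$ and $f(y) \geq g(\bar y)-\alpha(d(y,\bar y))$ is the wrong direction; rather use $f(y) = P_c g(y) \geq g(y')-\alpha(d(y,y'))$ for the maximizer $y'$ at $y$, which is not obviously controlled. The correct route is: for any $y$, pick $\bar y_y \in m(y)$; then $f(y)-f(x) \leq [g(\bar y_y)-\alpha(d(y,\bar y_y))] - [g(\bar y_y)-\alpha(d(x,\bar y_y))] = \alpha(d(x,\bar y_y)) - \alpha(d(y,\bar y_y)) \leq \alpha'(d(x,\bar y_y))\,d(x,y)$ by convexity and the triangle inequality. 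As $y\to x$, continuity/upper semicontinuity forces $\bar y_y$ to accumulate in $m(x)$ (a selection/compactness argument), so $\limsup d(x,\bar y_y) \leq R$, giving $|\nabla^+ f|(x) \leq \alpha'(R)$. The second inequality $\alpha'(R) \leq |\nabla_c^+ f|(x) = \alpha'(\sup_{\partial_c f(x)} d(x,\cdot))$ is immediate since $m(x)\subset \partial_c f(x)$.

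For item (2), the geodesic hypothesis upgrades the first inequality to an equality: given $\bar y \in \partial_c f(x)$, run a geodesic from $x$ toward $\bar y$ and evaluate the subdifferential inequality at points $z=\gamma_s$ on it; using $d(\gamma_s,\bar y)=d(x,\bar y)-s\,d(x,\bar y)$ one gets $f(\gamma_s)-f(x) \geq \alpha(d(x,\bar y)) - \alpha((1-s)d(x,\bar y))$, and dividing by $d(x,\gamma_s)=s\,d(x,\bar y)$ and letting $s\to 0^+$ yields $|\nabla^+ f|(x) \geq \alpha'(d(x,\bar y))$; taking the sup over $\partial_c f(x)$ gives $|\nabla^+ f|(x) \geq |\nabla_c^+ f|(x)$, which combined with item (1) closes the chain of equalities (also forcing $\max_{m(x)} d = \sup_{\partial_c f(x)} d$). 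The main obstacle here, and in item (1), is the measure-theory-free handling of the maximizer: justifying that the relevant suprema are attained and that $m(y)$ behaves upper-semicontinuously as $y\to x$, which is where compactness of closed balls and upper semicontinuity of $g$ do the real work.

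For item (3), the argument is dual and slightly easier. The middle inequality $|\nabla_c^- f|(x) \leq \alpha'(\min_{m(x)} d(x,\cdot))$ again follows from $m(x)\subset \partial_c f(x)$ and monotonicity of $\alpha'$ (the inf over the larger set $\partial_c f(x)$ is no larger). For $|\nabla^- f|(x) \leq |\nabla_c^- f|(x) = \alpha'(r)$ with $r := \inf_{\partial_c f(x)} d(x,\cdot)$: pick $\bar y \in \partial_c f(x)$ with $d(x,\bar y)$ close to $r$ (attained by compactness), and from $f(z) \geq f(x)+\alpha(d(x,\bar y))-\alpha(d(z,\bar y))$ with $z=y\to x$, use $d(y,\bar y)\leq d(x,\bar y)+d(x,y)$ and convexity to get $f(x)-f(y) \leq \alpha(d(x,\bar y)+d(x,y)) - \alpha(d(x,\bar y)) \leq \alpha'(d(x,\bar y)+d(x,y))\,d(x,y)$, so $[f(y)-f(x)]_-/d(x,y) \leq \alpha'(d(x,\bar y)+d(x,y))$; letting $y\to x$ and then $d(x,\bar y)\to r$ gives $|\nabla^- f|(x) \leq \alpha'(r) = |\nabla_c^- f|(x)$. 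Here no geodesic assumption is needed because the triangle inequality is used in the favorable direction. I would present items (1)–(3) in that order, factoring out the attainment/compactness lemma at the start so it is invoked cleanly in each part.
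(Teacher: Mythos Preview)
Your proposal is correct and follows essentially the same approach as the paper: in (1) you compare $f(y)-f(x)$ using the maximizer $\bar y_y\in m(y)$ and then invoke a Kuratowski-type stability of maximizers (the paper's Lemma~\ref{Kuratowski}) to pass to the limit; in (2) you evaluate the subdifferential inequality along a geodesic to $\bar y$; and in (3) you use the subdifferential inequality at a fixed $\bar y\in\partial_c f(x)$ together with the triangle inequality in the favorable direction. Aside from the expository detour in (1) (which you self-correct), your argument and the paper's coincide; for a clean write-up, note that the paper bounds the increment by $\alpha'(\max(d(x_n,y_n),d(x,y_n)))\,d(x,x_n)$ via the mean value theorem, which is a slightly tidier way to handle both signs of $d(x,\bar y_y)-d(y,\bar y_y)$ at once.
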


\begin{rem}
We do not know if there is equality in (3) when the space is geodesic.
\end{rem}

\begin{proof}[Proof of Proposition \ref{Gradients comparisons}]\ 
(1) First observe that, since $f=P_c g$ with $g$ bounded above, $f$ is locally Lipschitz 
(see \cite[Lemma 3.8]{GRS12}), so that $|\nabla^+f|$ is finite everywhere.  
The second inequality is an immediate consequence of the definition of $|\nabla_{c}^+f|(x)$ and the fact that, according to Lemma \ref{attainment}, $m(x)\subset \partial_{c}f(x)$.
Let us prove the first inequality. Let $(x_n)_{n\in \N}$ be a sequence of points converging to $x$, with $x_n\neq x$ for all $n$. For all $n$, fix $y_n\in m(x_n)$. It holds
\begin{align*}
f(x_n)-f(x)&\leq g(y_n)-\alpha(d(x_n,y_n))-\left(g(y_n)-\alpha(d(x,y_n))\right)\\
&\leq d(x,x_n)\alpha'\left(\max(d(x_n,y_n);d(x,y_n))\right),
\end{align*}
where the last inequality follows from  the mean value theorem, the triangle inequality, the non-negativity and the monotonicity of $\alpha'$. Since the function $t\mapsto [t]_+$ is non-decreasing, we get
$$\frac{[f(x_n)-f(x)]_+}{d(x_n,x)}\leq \alpha'(\max(d(x_n,y_n);d(x,y_n))).$$
So letting $n\to\infty$,
\begin{align*}
\limsup_{n\to\infty}\frac{[f(x_n)-f(x)]_+}{d(x_{n},x)}&\leq\alpha'\left(\limsup_{n\to \infty}d(x,y_n)\right)\\&= \alpha'\left(\max\{d(x,\bar{y}) : \bar{y} \text{ limit point of } (y_n)_{n\in \N}\}\right).\\
& \leq \alpha'\left( \max_{\bar{y} \in m(x)} d(x,\bar{y})\right),
\end{align*}
where the last inequality comes from Lemma \ref{Kuratowski} bellow.

\noindent(2) To prove the second point it is enough to show that 
$|\nabla_{c} ^+f|(x) \leq |\nabla ^+ f|(x)$ for all $x\in X$.
Let $\bar{y}\in \partial_cf(x)$. According to the definition of the $c$-subdifferential, 
$$
f(z)-f(x)\geq \alpha(d(x,\bar{y}))-\alpha(d(z,\bar{y})),\qquad \forall z\in X.
$$
From the definition of $|\nabla^+ f|(x)$, it follows that 
$$
|\nabla^+ f|(x)\geq \limsup_{z \to x} \frac{ \alpha(d(x,\bar{y}))-\alpha(d(z,\bar{y}))}{d(x,z)}.
$$
Let $(z_t)_{t\in [0,1]}$ be a geodesic connecting $x$ to $\bar{y}$, it holds $d(x,z_t)=td(x,\bar y)$, $d(z_t,\bar y)=(1-t)d(x,\bar y)$ and therefore 
$$
|\nabla^+ f|(x)\geq \limsup_{t \to 0} \frac{ \alpha(d(x,\bar{y}))-\alpha((1-t)d(x,\bar y))}{td(x,\bar y)}=\alpha'(d(x,\bar y)).
$$
Optimizing over all $\bar y \in \partial_{c}f(x)$ completes the proof.

\noindent(3) Let $(x_n)_{n\in \N}$ be a sequence of points converging to $x$, with $x_n\neq x$ for all $n$. 
If $\bar{y}\in \partial_cf(x)$, then it holds
\begin{align*}
f(x_{n})-f(x)&\geq \alpha\left(d(x,\bar{y})\right)-\alpha\left(d(x_n,\bar{y})\right)\\
&\geq -{d(x,x_{n})}\alpha'\left(\max(d(x_{n},\bar{y});d(x,\bar{y}))\right),
\end{align*}
where the second inequality follows from the mean value theorem and the triangle inequality.
Since the function $t\mapsto[t]_{-}$ is non-increasing, it holds
$$\limsup_{n\to+\infty} \frac{[f(x_{n})-f(x)]_{-}}{d(x,z_{n})}\leq \alpha'\left(d(x,\bar{y})\right).$$
Optimizing over all $\bar{y}\in \partial_cf(x)$ leads to the first bound in (3). As above, the second inequality in $(3)$ is an immediate consequence of the definition of $|\nabla^-_c f|(x)$ together with the fact that, according to Lemma \ref{attainment}, $m(x)\subset \partial_{c}f(x)$. This achieves the proof.
\end{proof}

During the proof we have used the following simple lemma whose proof can be found in the appendix.

\begin{lem}\label{Kuratowski}
Let $X$ be a complete separable metric space with compact balls and $g:X\to \R$ be an upper semicontinuous function bounded from above. 
Define, for all $x \in X$, $P_{t}g(x)=\sup_{y\in X}\left\{ g(y) -t\alpha\left(\frac{d(x,y)}{t}\right) \right\}$
and $m(t,x)$ as the set of points $y \in X$ where this supremum is reached. Then,
\begin{enumerate}
\item The set $m(t,x)$ is a non empty compact set of $X$.
\item Let $x_{n}\to x\in X$ and $t_{n}\to t>0$ be two converging sequences and consider a sequence $(y_{n})_{n\in \N}$ such that $y_{n}\in m(t_{n},x_{n})$ for all $n$. Then $(y_{n})_{n\in \N}$ is bounded and all its limit points belong to $m(t,x).$
\end{enumerate}
\end{lem}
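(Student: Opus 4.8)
The plan is to prove both assertions using the compactness of closed balls in $(X,d)$ together with the upper semicontinuity of $g$ and the coercivity coming from the superlinear-type growth of $h\mapsto t\alpha(h/t)$ (convexity of $\alpha$ plus $\alpha(0)=0$ already forces $\alpha(h)/h\to \alpha'(\infty)\geq \alpha'(h_0)>0$ for any $h_0>0$, and more importantly $t\alpha(d(x,y)/t)\to+\infty$ whenever $d(x,y)\to\infty$, once we check this growth is genuinely superlinear or at least that $g$ bounded above makes the relevant sublevel sets bounded). The key observation, used repeatedly, is that for fixed $t>0$ the function $\psi_{t,x}(y):=g(y)-t\alpha(d(x,y)/t)$ is upper semicontinuous (as $g$ is u.s.c. and $y\mapsto t\alpha(d(x,y)/t)$ is continuous), is bounded above by $\sup g$, and tends to $-\infty$ as $d(x,y)\to\infty$ since $g$ is bounded above while $t\alpha(d(x,y)/t)\to +\infty$. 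Hence for any real $r$ with $r<P_tg(x)$ the sublevel-type set $\{y: \psi_{t,x}(y)\geq r\}$ is contained in a closed ball $B(x,R_r)$, and being the intersection of a u.s.c. superlevel set with a compact ball, it is compact and nonempty; taking $r\uparrow P_tg(x)$ exhibits a maximizer, so $m(t,x)\neq\emptyset$, and $m(t,x)=\{\psi_{t,x}\geq P_tg(x)\}$ is itself a compact set. This proves (1).

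For (2), let $x_n\to x$, $t_n\to t>0$, and $y_n\in m(t_n,x_n)$. First I would establish a uniform bound on $d(x,y_n)$: since $y_n$ maximizes $\psi_{t_n,x_n}$ we have $g(y_n)-t_n\alpha(d(x_n,y_n)/t_n)\geq \psi_{t_n,x_n}(x_n)=g(x_n)$, and since $g$ is bounded (above by $M$, and we may bound $g(x_n)$ below by some fixed value once $x_n\to x$, using u.s.c.\ only gives an upper bound on $\limsup g(x_n)$, so instead I would simply evaluate $\psi_{t_n,x_n}$ at some fixed reference point $y_\star$ where $g(y_\star)>-\infty$: this gives $g(y_n)-t_n\alpha(d(x_n,y_n)/t_n)\geq g(y_\star)-t_n\alpha(d(x_n,y_\star)/t_n)$, whence $t_n\alpha(d(x_n,y_n)/t_n)\leq M - g(y_\star)+\sup_n t_n\alpha(d(x_n,y_\star)/t_n)$). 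Because $t_n\to t>0$ and $\alpha$ is increasing with superlinear growth of $h\alpha'(h)$ — or more simply because $t_n\alpha(\rho/t_n)\to\infty$ as $\rho\to\infty$ uniformly for $t_n$ in a compact subinterval of $(0,\infty)$ — this bounds $d(x_n,y_n)$ and hence $d(x,y_n)$ uniformly. By compactness of closed balls, $(y_n)$ has convergent subsequences; let $\bar y$ be any limit point, say $y_{n_k}\to\bar y$. For an arbitrary $z\in X$, maximality gives $g(y_{n_k})-t_{n_k}\alpha(d(x_{n_k},y_{n_k})/t_{n_k})\geq g(z)-t_{n_k}\alpha(d(x_{n_k},z)/t_{n_k})$; I would pass to the limit, using continuity of $(t,x,y)\mapsto t\alpha(d(x,y)/t)$ on $(0,\infty)\times X\times X$ and $\limsup_k g(y_{n_k})\leq g(\bar y)$ from upper semicontinuity of $g$, to obtain $g(\bar y)-t\alpha(d(x,\bar y)/t)\geq g(z)-t\alpha(d(x,z)/t)$ for all $z$, i.e.\ $\bar y\in m(t,x)$.

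The only genuine subtlety — the "main obstacle" — is making the coercivity estimate uniform in $n$: one must be sure that $t_n\alpha(d(x_n,y_n)/t_n)\to\infty$ forces $d(x_n,y_n)$ to stay bounded, which uses that $t_n$ is bounded away from $0$ and that $\alpha$ has at least linear-with-positive-slope growth (automatic: $\alpha(h)\geq \alpha'(h_0)(h-h_0)$ for $h\geq h_0$ by convexity, with $\alpha'(h_0)>0$). Everything else — upper semicontinuity of the objective, compactness of the relevant sublevel sets, extraction of limit points, and the limiting inequality — is routine and will be dispatched with the remarks above; I would relegate the detailed verification to the appendix as the statement of the lemma promises.
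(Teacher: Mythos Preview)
Your proposal is correct and follows essentially the same route as the paper's proof: for (1) you use upper semicontinuity of $\psi_{t,x}$ together with compactness of its superlevel sets (coming from boundedness above of $g$ and $\alpha(h)\to\infty$), and for (2) you obtain a uniform bound on $d(x_n,y_n)$ by comparing the value at $y_n$ to that at a fixed reference point and using $t_n\to t>0$, then pass to the limit via upper semicontinuity of $g$ and continuity of $(t,x,y)\mapsto t\alpha(d(x,y)/t)$. The paper's argument is the same, only phrased slightly more tersely (it fixes an arbitrary test point $y$ and writes the single chain $r - t_n\alpha(d(x_n,y_n)/t_n)\geq g(y)-t_n\alpha(d(x_n,y)/t_n)$ with $r=\sup g$, which is exactly your $y_\star$ trick).
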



\section{Proof of the Hamilton-Jacobi equations}
This part is devoted to the proof of Theorem \ref{HJ} and \ref{HJ 1}.

\proof[Proof of Theorem \ref{HJ 1}]
According to Lemma \ref{Kuratowski}, $m(t,x)$ is a non empty compact set of $X$. 
We treat the case of the right derivative; the other case is completely analogous.
Let $t>0$, $x\in X$ and $(h_{n})_{{n\in \N}}$ a sequence of positive numbers converging to $0$. For all $n\in \N$, we consider $z_{n}\in m(t+h_{n},x).$
Then, 
\begin{align*}
\frac{1}{h_{n}}\left(P_{t+h_{n}}f(x)-P_{t}f(x)\right)
&\leq 
\frac{1}{h_{n}}\left[ f(z_{n})-(t+h_n)\alpha\left(\frac{d(x,z_n)}{t+h_n}\right)-\left(f(z_{n})-t\alpha\left(\frac{d(x,z_n)}t\right)\right)\right]\\
&= 
\frac{1}{h_{n}}\left[t\alpha\left(\frac{d(x,z_n)}t\right) -(t+h_n)\alpha\left(\frac{d(x,z_n)}{t+h_n}\right)\right] .
\end{align*}
Define $D= \limsup_{k\rightarrow \infty} d(x,z_k)$ and take  $\varepsilon>0$. For all $n$ large enough, 
$$d(x,z_n)\leq D+\varepsilon .$$
For all $h\geq 0$, all $t >0$, by the convexity assumption on $\alpha$, the map 
$$d\mapsto t\alpha\left( \frac d t \right)-(t+h)\alpha\left( \frac d {t +h}\right)$$ is non-decreasing. 
Hence
\begin{align*}
\limsup_{n\to\infty} 
\frac{1}{h_{n}} & \left[ t\alpha \left( \frac{d(x,z_n)}t\right) 
-(t+h_n)\alpha\left(\frac{d(x,z_n)}{t+h_n}\right) \right] \\
& \quad \leq 
\lim_{n\to\infty} \frac{1}{h_{n}} \left[ t\alpha\left(\frac{D+\varepsilon}t\right) -(t+h_n)\alpha\left(\frac{D+\varepsilon }{t+h_n}\right)\right] 
=
\beta \left(\frac{D+\varepsilon}t \right)
\end{align*}
where we recall that $\beta(h)=h\alpha'(h)-\alpha(h)$, $h \geq 0$.
Since $\alpha$ is of class ${\mathcal C}^1$, as $\varepsilon$ goes to $0$ we get 
$$
\limsup_{n\to+\infty}\frac{1}{h_{n}}\left(P_{t+h_{n}}f(x)-P_{t}f(x)\right)\leq \beta \left(\frac{D}t \right).
$$
Applying Lemma \ref{Kuratowski}, it is not difficult to check that
$$D=\limsup_{n\to\infty} d(x,z_n)=\max\{d(x,\bar{z}) : \bar{z}\text{ limit point of } (z_n)_{n\in \N}\}\leq \max_{\bar{y}\in m(t,x)} d(x,\bar{y}).$$
The conditions on $\alpha$ ensure that $\beta $ is non-decreasing and therefore 
\begin{equation}\label{liminf}
 \limsup_{n\to+\infty}\frac{1}{h_{n}}\left(P_{t+h_{n}}f(x)-P_{t}f(x)\right)\leq \beta \left(\frac{\max_{\bar{y}\in m(t,x)} d(x,\bar{y})}t \right).
 \end{equation}
Analogously, if $\bar{y} \in m(t,x)$ then  
\begin{align*}
\frac{1}{h_{n}}\left(P_{t+h_{n}}f(x)-P_{t}f(x)\right)&\geq \frac{1}{h_{n}}  \left(t\alpha\left(\frac{d(x,\bar{y})}t\right) -(t+h_n)\alpha\left(\frac{d(x,\bar{y})}{t+h_n}\right)\right)
\end{align*}
So, letting $n$ go to $\infty$, and optimizing over $\bar{y}$ yields
\begin{equation}\label{limsup}
\liminf_{n\to \infty} \frac{1}{h_n}\left(P_{t+h_{n}}f(x)-P_{t}f(x)\right)\geq\beta \left(\frac{\max_{\bar{y}\in m(t,x)} d(x,\bar{y})}t \right).
\end{equation}
We conclude from \eqref{liminf} and \eqref{limsup} that $$\lim_{n\to\infty} \frac 1{h_n}\left(P_{t+h_{n}}f(x)-P_{t}f(x)\right) = \beta \left(\frac{\max_{\bar{y}\in m(t,x)} d(x,\bar{y})}t \right).$$ This completes the proof of proposition \ref{HJ 1}.
\endproof

\proof[Proof of Theorem \ref{HJ}.]
According to Theorem \ref{HJ 1}, $$\frac{d}{dt_+} P_t f(x) = \beta\left(\frac{\max_{\bar{y} \in m(t,x)}d(x,\bar{y})}{t}\right),$$
with $\beta(u)=u\alpha'(u)-\alpha(u)$, for all $u\geq0.$ By definition of the $c$-convexity, the function $x\mapsto P_tf(x)$ is $c$-convex for the cost $c(x,y)=t\alpha\left(\frac{d(x,y)}{t}\right)$. Applying the point (1) of Proposition \ref{Gradients comparisons}, it holds
$$|\nabla^+ P_tf|(x)\leq \alpha'\left(\frac{\max_{\bar{y} \in m(t,x)}d(x,\bar{y})}{t}\right).$$
Observing that $\beta(u)=\alpha^*(\alpha'(u))$ gives the result. According to point (3) of Proposition \ref{Gradients comparisons}, equality holds in the geodesic case. The proof of the inequality involving the left derivative of $P_tf$ is similar.
\endproof


\section{log-Sobolev inequality and hypercontractivity  on a metric space}\label{sectionlogsob}
In this section, following \cite{BGL01}, we show that log-Sobolev inequalities on metric spaces are equivalent to some hypercontractivity property of the ``semigroup" $Q_t$. 
The proof of Theorem \ref{thm-hyper} relies on the differentiation of the left hand side of \eqref{hypercont}. To that purpose, we use the next technical proposition whose proof is postponed to the appendix.


\begin{prop}\label{deriv}
Let $f$ be a bounded and continuous function on $X$ and $k:(a,b)\to (0,+\infty)$ be a function of class $\mathcal{C}^1$ defined on an open interval $(a,b)\subset (0,\infty)$ and such that $k'(t)\neq 0$ for all $t$. 
Define
$$
H(t)=\frac{1}{k(t)}\log\left(\int e^{k(t)Q_{t}f}\,d\mu \right)
\quad
\mbox{and} 
\quad
K(t)=\frac{1}{k(t)}\log\left(\int e^{k(t)P_{t}f}\,d\mu \right),\qquad  t\in (a,b) .
$$
The functions $H$ and $K$ are continuous and differentiable on the right and on the left on $(a,b)$.
Moreover, for all $t\in (a,b)$, it holds
$$\frac{dH}{dt_+ } (t)=\frac{k'(t)}{k(t)^2} \frac{1}{\int e^{k(t)Q_{t}f}\,d\mu}\left[\ent_{\mu}\left(  e^{k(t)Q_{t}f}\right)+ \frac{k(t)^2}{k'(t)} \int \left(\frac{d}{dt_+ } Q_{t}f\right) e^{k(t)Q_{t}f}\,d\mu\right].$$
The same formula holds for $dH/dt_-$, $dK/dt_+$ and $dK/dt_-$ (replacing $Q_t$ by $P_t$).
\end{prop}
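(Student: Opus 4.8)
The plan is to differentiate $H$ directly using the Hamilton--Jacobi inequality \eqref{AGS} and the differentiability of $t\mapsto Q_tf(x)$ established in Theorem \ref{HJ 1}, being careful to justify the interchange of differentiation and integration. First I would record the basic regularity of the integrand. Since $f$ is bounded and continuous, $|Q_tf(x)|\le \|f\|_\infty$ for all $t,x$, and for fixed $x$ the map $t\mapsto t\alpha(d(x,y)/t)$ is convex in $t$, so $t\mapsto Q_tf(x)$ is, at each $x$, an infimum of convex functions; combined with the quantitative control coming from Theorem \ref{HJ 1} (which gives $\frac{d}{dt_+}Q_tf(x)=-\beta(\min_{\bar y\in m(t,x)}d(x,\bar y)/t)$, bounded locally uniformly in $(t,x)$ on compacts, using that $\beta$ is continuous and the maximizing distances are bounded via Lemma \ref{Kuratowski}) one gets that the difference quotients $\frac{1}{h}(Q_{t+h}f(x)-Q_tf(x))$ are bounded by a constant depending only on the relevant compact $t$-interval and on $\|f\|_\infty$, uniformly in $x\in X$ (here compactness of closed balls is used to bound $d(x,\bar y)$ in terms of $\|f\|_\infty$ and $t$). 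This uniform bound, together with continuity of the integrand and dominated convergence, gives that $t\mapsto\int e^{k(t)Q_tf}\,d\mu$ is continuous, hence $H$ and $K$ are continuous.

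Next I would differentiate. Write $\Phi(t)=\int e^{k(t)Q_tf}\,d\mu$, so $H(t)=\frac1{k(t)}\log\Phi(t)$. For the right derivative, consider the difference quotient
$$
\frac{\Phi(t+h)-\Phi(t)}{h}=\int \frac{e^{k(t+h)Q_{t+h}f}-e^{k(t)Q_tf}}{h}\,d\mu.
$$
Using the mean value theorem on the exponential and the product $k(t)Q_tf$, the integrand equals $e^{\xi_h}\big(\frac{k(t+h)-k(t)}{h}Q_{t+h}f + k(t)\frac{Q_{t+h}f-Q_tf}{h}\big)$ for an intermediate value $\xi_h$ between $k(t)Q_tf$ and $k(t+h)Q_{t+h}f$; all three factors are bounded uniformly in $x$ by the estimates above, so dominated convergence applies and, letting $h\downarrow0$ and using $\frac{d}{dt_+}Q_tf(x)$ from Theorem \ref{HJ 1},
$$
\frac{d\Phi}{dt_+}(t)=k'(t)\int Q_tf\, e^{k(t)Q_tf}\,d\mu + k(t)\int \Big(\frac{d}{dt_+}Q_tf\Big)e^{k(t)Q_tf}\,d\mu.
$$
One also checks $t\mapsto\Phi(t)$ admits a left derivative by the same argument with $h\uparrow0$ (the pointwise left derivative of $Q_tf$ again exists by Theorem \ref{HJ 1}). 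Then the chain/quotient rule for one-sided derivatives gives
$$
\frac{dH}{dt_+}(t)=\frac1{k(t)}\frac{1}{\Phi(t)}\frac{d\Phi}{dt_+}(t)-\frac{k'(t)}{k(t)^2}\log\Phi(t).
$$
Substituting the expression for $\frac{d\Phi}{dt_+}$ and regrouping, the terms $k'(t)\int Q_tf\,e^{k(t)Q_tf}\,d\mu$ and $-\frac{k'(t)}{k(t)^2}\log\Phi(t)\cdot k(t)\Phi(t)$ combine, after multiplying and dividing by $k(t)$, into $\frac{k'(t)}{k(t)^2\Phi(t)}\big[\int k(t)Q_tf\,e^{k(t)Q_tf}\,d\mu - \Phi(t)\log\Phi(t)\big]=\frac{k'(t)}{k(t)^2\Phi(t)}\ent_\mu(e^{k(t)Q_tf})$, which yields exactly the claimed formula; the $P_t$ cases are identical, and the left-derivative formulas follow by replacing $\frac{d}{dt_+}$ with $\frac{d}{dt_-}$ throughout.

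The main obstacle is the rigorous justification of differentiation under the integral sign: one needs a domination that is uniform in $x\in X$ (not just locally), which is why it is essential to extract from Theorem \ref{HJ 1} and Lemma \ref{Kuratowski} a bound on $\frac1t\max_{\bar y\in m(t,x)}d(x,\bar y)$ that depends only on $\|f\|_\infty$ and a compact $t$-interval, independent of $x$ — this follows since $t\alpha(d(x,\bar y)/t)\le f(\bar y)-Q_tf(x)\le 2\|f\|_\infty$ forces $d(x,\bar y)\le t\alpha^{-1}(2\|f\|_\infty/t)$, and then $\beta$ being continuous and monotone bounds the one-sided derivatives of $Q_tf$ uniformly. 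A secondary technical point is that $\xi_h$ in the mean value step must be controlled uniformly, which again reduces to the uniform bound on $Q_tf$ and on its difference quotients; once these are in place the remaining manipulations are the routine chain-rule bookkeeping sketched above.
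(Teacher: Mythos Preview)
Your approach is essentially the same as the paper's: both reduce to showing that $\Phi(t)=\int e^{k(t)Q_tf}\,d\mu$ has one-sided derivatives given by differentiating under the integral sign, justify this via dominated convergence using a uniform-in-$x$ bound on the difference quotients of $Q_tf$, and then recover the entropy term by routine chain-rule bookkeeping. The paper simply packages the uniform Lipschitz-in-$t$ estimate you extract from $t\alpha(d(x,\bar y)/t)\le 2\|f\|_\infty$ into a separate statement (point (2) of Proposition~\ref{prop Q_{t}}, giving the explicit bound $\beta(\alpha^{-1}(\mathrm{Osc}(f)/t))$), whereas you derive it inline; otherwise the arguments coincide.

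Two small remarks: the formula from Theorem~\ref{HJ 1} transported to $Q_t$ reads $\frac{d}{dt_+}Q_tf(x)=-\beta\big(\tfrac{1}{t}\max_{\bar y\in m(t,x)}d(x,\bar y)\big)$ (not $\min$), though this is immaterial for the domination step; and the observation that $t\mapsto Q_tf(x)$ is an infimum of convex functions is correct but not actually used---the work is done entirely by the quantitative bound you state afterward.
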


\begin{proof}[Proof of Theorem \ref{thm-hyper}.]
Let us first show that the log-Sobolev inequality implies the hypercontractivity property:
\begin{equation}\label{hypercont-proof}
\left\|e^{Q_tf}\right\|_{k(t)}\leq \left\|e^f\right\|_{k(0)},
\end{equation}
for all bounded continuous function $f:X\to \R,$ with 
\begin{equation}\label{k(t)}
k(t)=\left(1+\frac{C^{-1}(t-t_o)}{p_\alpha-1}\right)^{p_\alpha-1}\mathbf{1}_{t\leq t_o} + \left(1+\frac{C^{-1}(t-t_o)}{r_\alpha-1}\right)^{r_\alpha-1}\mathbf{1}_{t> t_o},
\end{equation}
with the convention that $k(t)=\min\left(1;\left(1+\frac{C^{-1}(t-t_o)}{p_\alpha-1}\right)^{p_\alpha-1} \right),$ if $r_\alpha=1.$
The exponents $r_\alpha$ and $p_\alpha$ have the following property (see \cite[proof of Lemma A.3]{GRS12}):
\begin{align*}
\alpha^*(sx)&\leq s^{\frac{p_\alpha}{p_\alpha-1}}\alpha^*(x),\qquad \forall x\geq0,\forall s\in[0,1]\\
\alpha^*(sx)&\leq s^{\frac{r_\alpha}{r_\alpha-1}}\alpha^*(x),\qquad \forall x\geq0,\forall s>1.
\end{align*}
Let $H(t)=\log \left\|e^{Q_tf}\right\|_{k(t)}$, with $f:X\to \R$ bounded and continuous. 
According to Proposition \ref{deriv}, we have for all $t>0$
$$
\frac{dH}{dt_+ } (t)\leq \frac{k'(t)}{k^2(t)} \frac{1}{\int e^{k(t)Q_{t}f}\,d\mu}\left[\ent_{\mu}\left(  e^{k(t)Q_{t}f}\right)+\frac{k^2(t)}{k'(t)} \int \frac{d}{dt_{+}}Q_{t}f  e^{k(t)Q_{t}f}\,d\mu\right].
$$
Applying ${\bf LSI}_\alpha^-(C)$  to the function $k(t)Q_{t}f$ 
(which belongs to $\mathcal{F}_\alpha$ thanks to Lemma \ref{Falpha} below), it follows that for all $t>0$ (or all $0<t\leq t_o$ if $r_\alpha=1$),
\begin{align*}
\ent_{\mu}\left(e^{k(t)Q_{t}f}\right)&\leq C\int \alpha^*\left(k(t)|\nabla^- Q_{t}f|\right)e^{k(t)Q_{t}f}\,d\mu\\
&\leq C\left(k(t)^{\frac{p_\alpha}{p_\alpha-1}}\mathbf{1}_{t\leq t_o} + k(t)^{\frac{r_\alpha}{r_\alpha-1}}\mathbf{1}_{t> t_o}\right)\int \alpha^*\left(|\nabla^- Q_{t}f|\right)e^{k(t)Q_{t}f}\,d\mu\\
& \leq -C\left(k(t)^{\frac{p_\alpha}{p_\alpha-1}}\mathbf{1}_{t\leq t_o} + k(t)^{\frac{r_\alpha}{r_\alpha-1}}\mathbf{1}_{t> t_o}\right) \int \frac{d}{dt_{+}} Q_{t}fe^{k(t)Q_{t}f}\,d\mu,
\end{align*}
where the last inequality follows from the Hamilton-Jacobi differential inequality \eqref{AGS}. Therefore,
$$\frac{dH}{dt_+ } (t)\leq \frac{1-Ck'(t)\left(k(t)^{\frac{2-p_\alpha}{p_\alpha-1}}\mathbf{1}_{t\leq t_o} + k(t)^{\frac{2-r_\alpha}{r_\alpha-1}}\mathbf{1}_{t> t_o}\right)}{\int e^{k(t)Q_{t}f}\,d\mu} \int \frac{d}{dt_{+}} Q_{t}fe^{k(t)Q_{t}f}\,d\mu =0
$$
where the last equality is a consequence of the very definition of $k$.
Hence $H$ is non-increasing on $(0,+\infty)$ (or on $(0,t_o]$ if $r_\alpha=1$). 
When $\alpha(h)/h \to\infty$, when $h\to\infty,$ then according to point (3) of Proposition \ref{prop Q_{t}} and the dominated convergence theorem, it holds
$$
\log \left\|e^{Q_{t}f}\right\|_{k(t)} = H(t)\leq  \lim_{s \to 0^+} H(s) = \log \left\|e^f\right\|_{k(0)}.
$$
If $\alpha(h)/h \to \ell \in \R^+$, when $h\to\infty$, then according to point (3) of Proposition \ref{prop Q_{t}}, the same conclusion holds if $\mathrm{Lip}(f)<\ell.$
Consider now a bounded continuous function $f : X \to \mathbb{R}$ and fix $\varepsilon \in (0,1)$. 
Thanks to Lemma \ref{Falpha} below,
$\mathrm{Lip}((1-\varepsilon)Q_s f)\leq (1-\varepsilon)\ell$ for all $s>0$.
%
%
Since $Q_sf\leq f$, we can conclude that
$$
\left\|e^{Q_t ((1-\varepsilon) Q_s f)}\right\|_{k(t)}\leq \left\|e^{(1-\varepsilon)Q_sf}\right\|_{k(0)}\leq \left\|e^{(1-\varepsilon)f}\right\|_{k(0)}.
$$
Using Lebesgue's Theorem and Lemma \ref{Falpha}, as $\varepsilon \to 0$, we get
$$
\left\|e^{Q_t ( Q_s f)}\right\|_{k(t)}
\leq 
\left\| e^{f}\right\|_{k(0)}.
$$
Since $Q_{t+s}f\leq Q_t(Q_sf)$ and thanks to point (2) of Proposition \ref{prop Q_{t}}, we have $\lim_{s \to 0}  Q_{t+s} f = Q_t f$ so that (using Lebesgue's theorem) the hypercontractivity property  \eqref{hypercont-proof} still holds when $f$ is bounded and continuous, as expected.

\smallskip

Now we prove that if \eqref{hypercont-proof} holds for all bounded continuous $f$ and all $t>0$ with $k$ defined by \eqref{k(t)}, then $\mu$ verifies ${\bf LSI}_\alpha^-(C)$. Observe that in the case $\alpha(h)/h\to \ell\in \R^+$, it is enough to show that ${\bf LSI}_\alpha^-$ holds for functions with $\mathrm{Lip}(f)<\ell.$

Let $H(t)=\log \left\|e^{Q_tf}\right\|_{k(t)}$, for all $t>0$, 
with $f\in \mathcal{F}_\alpha$ and $\mathrm{Lip} (f)< \ell$ when $\alpha(h)/h \to\ell \in \R^+$ as $h\to\infty.$ By assumption, it holds
$$
\limsup_{t\to 0^+} \frac{H(t)-H(0^+)}t\leq 0.
$$
Let us choose $t_o<C(p_\alpha-1)$ in the definition of $k(t)$ so that $k(0)$ and $k'(0)>0$.
It is not difficult to check that
\begin{equation*}
\limsup_{t\to 0^+}  \frac{H(t)-H(0^+)} t =  \frac{k'(0)}{k(0)^2} \frac{\ent_{\mu}\left(  e^{k(0)f}\right)}{\int e^{k(0)f}\,d\mu}
- \frac{1}{k(0)\int e^{k(0)f}\,d\mu} \liminf_{t\to 0^+}\int   \frac{e^{k(t)f}-e^{k(t)Q_tf}}t \,d\mu.
\end{equation*}
According to the mean value theorem, there exists a function $\varphi:(0,\infty)\times X \to \R$ taking values in the interval $[k(t)e^{k(t)Q_{t}f(x)} ; k(t)e^{k(t)f(x)}]$ such that
$$ \frac{e^{k(t)f}-e^{k(t)Q_tf}}t = \frac{f-Q_tf}t \varphi(t,x),\qquad \forall t>0,x\in X.$$
Applying point (4) of Proposition \ref{prop Q_{t}}, we get
\begin{align*} 
\liminf_{t\to 0^+} \int   \frac{e^{k(t)f}-e^{k(t)Q_tf}}t \,d\mu &\leq \limsup_{t\to 0^+} \int   \frac{e^{k(t)f}-e^{k(t)Q_tf}}t \,d\mu\\
&\leq k(0 ) \int \alpha^*(|\nabla^- f|) \,e^{k(0)f}\,d\mu .
\end{align*}
So
$$
\ent_{\mu}\left(  e^{k(0)f}\right)
\leq 
\frac{k(0)^2} {k'(0)}\int  \alpha^*\left(|\nabla^-f|\right)  e^{k(0)f}\,d\mu.
$$
Since $k(0)=\left(1-\frac{C^{-1}t_o}{p_\alpha-1}\right)^{p_\alpha-1}\to 1$ 
and $k(0)^2/k'(0) = C \left(1-\frac{C^{-1}t_o}{p_\alpha-1}\right)^{p_\alpha-2}\to C$, 
when $t_o\to0^+,$ we conclude that ${\bf LSI}_\alpha^-(C)$ holds. This completes the proof.
\end{proof}

During the proof above, we used the following technical lemma whose proof is postponed to the appendix for the clarity of the exposition.

\begin{lem} \label{Falpha}
Set $\ell = \lim_{h \to \infty}\frac{\alpha(h)}{h} \in \mathbb{R}\cup\{+\infty\}$.
Let $f:X\to \R$ be a bounded and continuous function. 
Then,
\begin{enumerate}
\item For all $t >0$, $Q_t f \in \mathcal{F}_\alpha$ and $\mathrm{Lip}(Q_t f) \leq \ell$.
\item For all $t>0$ and all $x \in X$,  $\lim_{\varepsilon \to 0} Q_t ((1-\varepsilon) f)(x) = Q_t f(x)$.
\end{enumerate}
\end{lem}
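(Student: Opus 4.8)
The plan is to prove the two claims directly from the definition \eqref{Qt} of $Q_t f$, using only elementary properties of $\alpha$, the boundedness of $f$, and the assumption that closed balls in $X$ are compact (so that the infimum in $Q_t f(x)$ is attained, by Lemma \ref{Kuratowski}). For (1), the key point is that because $\alpha$ is convex with $\alpha(0)=0$, the function $h\mapsto t\alpha(h/t)$ is itself convex, increasing and vanishes at $0$; moreover its slope at infinity is $\ell$. First I would show $\mathrm{Lip}(Q_t f,r)<\infty$ for every $r>0$: given $x,x'$ with $d(x,x')\le r$ and a near-optimal $y$ for $Q_t f(x')$ (or an exact minimizer, which exists), one has
$$Q_t f(x) - Q_t f(x') \le t\alpha\!\left(\tfrac{d(x,y)}{t}\right) - t\alpha\!\left(\tfrac{d(x',y)}{t}\right) \le d(x,x')\,\alpha'\!\left(\tfrac{d(x,y)\vee d(x',y)}{t}\right),$$
by the mean value theorem and the triangle inequality; since the minimizer $y$ can be taken in a ball of radius controlled by $\mathrm{Osc}(f)$ (because $f(y)+t\alpha(d(x',y)/t)\le f(x')$ forces $\alpha(d(x',y)/t)$ to be bounded by $\mathrm{Osc}(f)/t$), the argument of $\alpha'$ stays bounded, giving a finite local Lipschitz constant. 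To get the global bound $\mathrm{Lip}(Q_t f)\le\ell$, I would redo the same computation but now simply bound $\alpha'(\,\cdot\,)\le \sup_{h\ge 0}\alpha'(h)=\ell$ (the monotone limit of the derivatives of the convex function $\alpha$), which holds uniformly in the location of $y$; this shows $|Q_t f(x)-Q_t f(x')|\le \ell\, d(x,x')$ for all $x,x'$. Combined with the first step, $Q_t f\in\mathcal F_\alpha$.

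For (2), fix $t>0$ and $x\in X$. Since $f$ is bounded, the minimizers $y_\varepsilon$ of $Q_t((1-\varepsilon)f)(x)$ lie in a fixed compact ball (same oscillation estimate as above, uniform for $\varepsilon\in(0,1)$), so along any sequence $\varepsilon_n\to 0$ we may extract $y_{\varepsilon_n}\to \bar y$. On the one hand $Q_t((1-\varepsilon)f)(x)\le (1-\varepsilon)f(x)+0 = (1-\varepsilon)f(x)\to f(x)$ by taking $y=x$, and more usefully $Q_t((1-\varepsilon)f)(x)\le (1-\varepsilon)f(\bar y)+t\alpha(d(x,\bar y)/t)$ for any fixed $\bar y$, so $\limsup_{\varepsilon\to 0}Q_t((1-\varepsilon)f)(x)\le f(\bar y)+t\alpha(d(x,\bar y)/t)$; optimizing over $\bar y$ gives $\limsup \le Q_t f(x)$. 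On the other hand, by continuity of $f$ and of $\alpha$,
$$\liminf_n Q_t((1-\varepsilon_n)f)(x) = \liminf_n\Big[(1-\varepsilon_n)f(y_{\varepsilon_n}) + t\alpha\big(\tfrac{d(x,y_{\varepsilon_n})}{t}\big)\Big] \ge f(\bar y)+t\alpha\big(\tfrac{d(x,\bar y)}{t}\big)\ge Q_t f(x).$$
Since the sequence $\varepsilon_n\to 0$ was arbitrary, $\lim_{\varepsilon\to 0}Q_t((1-\varepsilon)f)(x)=Q_t f(x)$.

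The only mildly delicate point is the uniform-in-$\varepsilon$ (and in $x$, for the global Lipschitz bound) control of where the minimizers live, which is what makes the interchange of limits legitimate; this is handled once and for all by the oscillation bound $t\alpha(d(x,y_\varepsilon)/t)\le \mathrm{Osc}(f)$ together with the fact that $\alpha$ is increasing and proper. Everything else is the mean value theorem and the elementary fact that $\alpha'$ increases to $\ell$. I expect no real obstacle; the main care is simply to keep the estimates uniform so that compactness of balls can be invoked cleanly.
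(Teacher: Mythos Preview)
Your proposal is correct and follows essentially the same route as the paper's proof: for (1), both use the oscillation bound to localize minimizers, apply the mean value theorem to get $\mathrm{Lip}(Q_t f,r)<\infty$, and then bound $\alpha'\le \ell$ (the paper phrases the last step as ``$Q_t f$ is an infimum of $\ell$-Lipschitz functions,'' which is the same thing); for (2), both use the uniform oscillation estimate to trap the minimizers $y_\varepsilon$ in a fixed compact ball, extract a convergent subsequence, and pass to the limit. The only cosmetic difference is that you prove $\mathrm{Lip}(Q_t f,r)<\infty$ for every $r>0$ whereas the paper does it for one specific $r$, but this costs nothing extra.
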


We are now in position to derive the Otto-Villani Theorem from Theorem \ref{thm-hyper}. 

Recall that, according to Bobkov and G\"otze characterization \cite{BG99}, $\mu$ verifies the transport-entropy inequality ${\bf T}_\alpha(C)$ if and only if 
\begin{equation}\label{BG}
\int e^{C^{-1}Q_1f}\,d\mu \leq \exp\left(C^{-1}\int f\,d\mu\right),
\end{equation}
for all bounded continuous function $f:X\to \R.$

\proof[Proof of Theorem \ref{Otto-Villani}]
Since $\mu$ verifies ${\bf LSI}_\alpha^{-}(C)$, it verifies the hypercontractivity property \eqref{hypercont} of Theorem \ref{thm-hyper}. Take $t_o=C(p_\alpha-1)$ in the definition of $k(t)$, the hypercontractivity inequality \eqref{hypercont} yields for all bounded continuous function $f$,
$$\int e^{k(t)Q_{t}f} d\mu\leq e^{k(t)\int f d\mu},\qquad \forall t>0.$$
According to \eqref{BG}, this means that $\mu$ verifies the following family of transport-entropy inequalities
$$\mathcal{T}_{\alpha(\,\cdot\, /t)}(\mu,\nu)\leq \frac{1}{tk(t)} H(\nu|\mu),\qquad \forall \nu \in \mathcal{P}(X),$$
where $\alpha(\,\cdot\, /t)$ denotes the function $x\mapsto \alpha(x/t)$. According to \cite[Proof of Lemma A.3]{GRS12}, 
$$\alpha(x) \leq \max(t^{r_\alpha} ; t^{p_\alpha}) \alpha(x/t),\qquad \forall t>0.$$
Therefore, $\mu$ verifies ${\bf T}_\alpha(A)$, with the constant 
$$A=\inf_{t>0} \frac{\max(t^{r_\alpha-1} ; t^{p_\alpha-1})}{k(t)}.$$
Taking $t=C(p_\alpha-1)$ for which $k(t)=1$, we see that $$A\leq \max\left( ((p_\alpha-1)C)^{r_\alpha-1};((p_\alpha-1)C)^{p_\alpha-1}\right),$$ which ends the proof.
\endproof

\proof[Proof of Proposition \ref{TransversPoincare}]
Define for all $t>0$ the operators $$R_t f(x)=\inf_{y\in X}\left\{f(y) + \frac{1}{t} \theta(d(x,y))\right\}\quad\text{and}\quad Q_t f(x)=\inf_{y\in X}\left\{f(y) + \frac{1}{t} d^2(x,y)\right\}$$
According to Bobkov and G\"otze dual formula \eqref{BG} and by homogeneity, it holds for all $t>0$
$$\int e^{C^{-1}tR_t f}\, d\mu \leq e^{C^{-1}t\int f\,d\mu},$$
for all bounded continuous function $f.$
Take a function $f$ such that $|f|\leq M$ and $\mathrm{Lip}(f,r)<\infty$ for some $r>0$. If $d(x,y)\geq a$, and $t\leq a^2/(2M)$, then it holds
$$f(y)+\frac{1}{t}\theta(d(x,y)) \geq -M + \frac{(2M)}{a^2} a^2=M \geq f(x) \geq R_tf(x).$$
It follows that if $t\leq a^2/2M$, then
$$R_t f(x)\geq \inf_{ y : d(x,y)\leq a}\left\{f(y) + \frac{1}{t}d^2(x,y))\right\} \geq Q_tf(x).$$
So the following inequality holds
$$\int e^{C^{-1}tQ_tf}\,d\mu \leq e^{C^{-1}t\int f\,d\mu},\qquad \forall t\leq a^2/(2M).$$
Applying Taylor formula, we see that
$$e^{C^{-1}tQ_t f(x)} = 1 + C^{-1}tQ_tf(x) + \frac{C^{-2}(tQ_tf)^2(x)}{2}e^{\varphi(t,x)},$$
where $|\varphi(t,x)| \leq tM$, for all $t,x$. 
So, for all $t\leq a^2/(2M)$,
$$C^{-1}\int \frac{Q_tf-f}{t}\,d\mu + \frac{C^{-2}}{2}\int (Q_tf)^2(x)e^{\varphi(t,x)}\,\mu(dx)\leq \frac{e^{C^{-1}t\int f\,d\mu}-1 -tC^{-1}\int f\,d\mu}{t^2}.$$
Letting $t$ go to $0$ and using points (3) and (4) of Proposition \ref{prop Q_{t}} together with the dominated convergence theorem yields to
$$-\frac{C^{-1}}{4}\int |\nabla^-f|^2\,d\mu + \frac{C^{-2}}{2} \int f^2\,d\mu \leq \frac{C^{-2}}{2} \left(\int f\,d\mu\right)^2, $$
which is the announced Poincar\'e inequality.
\endproof

\section{Transport-entropy inequalities as restricted log-Sobolev inequalities}\label{transpotlogsob}

In this section, we show that a transport-entropy inequality  can be characterized as a modified log-Sobolev inequality restricted to a class of $c$-convex functions. Actually we will prove the following improved version of Theorem \ref{main result} which holds even if the space is not geodesic.

\begin{thm}\label{main result improved}
Let $\mu$ be a probability measure on $(X,d)$ and $p\geq 2$. Define the function $\beta_p$ as follows:
\begin{equation}\label{beta_p}
\beta_p(u)= \frac u{[u^{1/(p-1)}-1]^{p-1}},\qquad \forall u>1.
\end{equation}
The following properties are equivalent:
\begin{enumerate}
\item There is some $C>0$ such that $\mu$ verifies ${\bf T}_p(C)$.\\
\item There is some $D>0$ such that $\mu$ verifies the following $(\tau)$-log-Sobolev inequality: for all bounded continuous $f$ and all $0<\lambda<1/D$, it holds
$$\ent_\mu(e^f)\leq \frac{1}{1-\lambda D} \int (f-Q^\lambda f)e^f\,d\mu,$$
where for all $\lambda>0$, $Q^\lambda f(x)=\inf_{y\in X}\left\{f(y) +\lambda c_{p}(x,y)\right\}.$\\
\item There is some $E>0$ such that $\mu$ verifies the following restricted log-Sobolev inequality: for all $Kc_p$-convex function $f$, with $0<K<1/E$ it holds 
$$\ent_{\mu}(e^f)\leq \frac{\beta_p(u)-1}{(1-KEu)pK^{q-1}}\int  |\nabla_{Kc_p}^- f|^{q}e^f\, d\mu,\qquad \forall u\in(1,1/(KE))$$
where $q=p/(p-1)$ and $|\nabla_{Kc_p}^-f|(x)= K\left(\inf_{\bar{y}\in \partial_{Kc_p}f(x)} d(x,\bar{y})\right)^{p-1}$ (see Proposition \ref{Gradients comparisons}).
\end{enumerate}
Moreover, when the space $(X,d)$ is geodesic these properties are equivalent to the following
\begin{itemize}
\item[\textit{(3')}] There is some $F>0$ such that $\mu$ verifies the following restricted log-Sobolev inequality: for all $Kc_p$-convex function $f$, with $0<K<1/F$ it holds 
$$\ent_{\mu}(e^f)\leq \frac{\beta_p(u)-1}{(1-KFu)pK^{q-1}}\int  |\nabla^+ f|^{q}e^f\, d\mu,\qquad \forall u\in(1,1/(KF))$$
\end{itemize}
The optimal constants $C_\mathrm{opt},D_\mathrm{opt},E_\mathrm{opt},F_\mathrm{opt}$ are related as follows
$$F_\mathrm{opt} \leq E_\mathrm{opt} \leq D_\mathrm{opt}\leq C_\mathrm{opt}\leq \kappa_p F_\mathrm{opt},$$ where $\kappa_p$ is some universal constant depending only on $p.$ For $p=2$, one can take $\kappa_2 = e^2.$
\end{thm}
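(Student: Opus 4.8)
The plan is to establish the cycle of implications $(1)\Rightarrow(2)\Rightarrow(3)\Rightarrow(1)$, together with the quantitative control on the constants, and then treat $(3')$ separately in the geodesic case. The implication $(1)\Rightarrow(2)$ is known (it holds for arbitrary cost functions, see \cite{GRS11}): assuming ${\bf T}_p(C)$, one uses the Bobkov--G\"otze dual formulation \eqref{BG} applied to the function $\lambda Q^\lambda f + $ (suitable shift), combined with the inf-convolution inequality $Q^{\lambda}(g) + \lambda Q^{1}(h) \leq \ldots$ and the elementary bound relating $f - Q^{\lambda}f$ to the transport cost, to produce the $(\tau)$-log-Sobolev inequality with $D_{\mathrm{opt}}\leq C_{\mathrm{opt}}$. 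I would simply cite this and reproduce the short computation.

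For $(2)\Rightarrow(3)$: start from a $Kc_p$-convex function $f$, so $f = P_{Kc_p}g$ for some $g$, and apply the $(\tau)$-log-Sobolev inequality. The point is to bound $f - Q^{\lambda}f$ pointwise by a multiple of $|\nabla^-_{Kc_p}f|^{q}$. Here one uses that, since $f$ is $Kc_p$-convex, for $\bar y \in \partial_{Kc_p}f(x)$ achieving the minimal distance $d(x,\bar y)=:r$, the function $z\mapsto f(z) + Kc_p(z,\bar y) - Kc_p(x,\bar y)$ has minimum $f(x)$ at $z=x$; evaluating $Q^{\lambda}f(x) \geq$ this expression minus the gap between the two costs $\lambda c_p$ and $Kc_p$, and optimizing the cost comparison (this is where the scaling function $\beta_p(u) = u/[u^{1/(p-1)}-1]^{p-1}$ and the parameter $u$ enter — it quantifies $\inf\{Kc_p(x,y)+\lambda c_p(y,\cdot)\}$ versus $Kc_p(x,\cdot)$ along the two-step transport), gives exactly $f(x) - Q^{\lambda}f(x) \leq \frac{\beta_p(u)-1}{pK^{q-1}}\,\big(K r^{p-1}\big)^{q} \cdot (\text{const})$ for the appropriate relation between $\lambda, K, u$. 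Choosing $\lambda$ so that $\lambda D$ matches $KEu$ with $E = D$ closes the implication with $E_{\mathrm{opt}}\leq D_{\mathrm{opt}}$.

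For $(3)\Rightarrow(1)$: this is the substantive direction. The idea, following the Hamilton--Jacobi/hypercontractivity scheme of Section \ref{sectionlogsob}, is to run the restricted log-Sobolev inequality along the semigroup $Q^\lambda_t$ (equivalently, $Q_t$ for the cost $c_p$), noting that $Q_t f$ is automatically $c_p$-convex, hence $Kc_p$-convex after rescaling, so the restricted inequality applies to it. One differentiates $t\mapsto \log\|e^{Q_t f}\|_{k(t)}$ as in Proposition \ref{deriv}, bounds the entropy term using $(3)$ with $|\nabla^-_{Kc_p}Q_tf|$ controlled (via Proposition \ref{Gradients comparisons}, point (3), and Theorem \ref{HJ 1}) by the Hamilton--Jacobi inequality \eqref{AGS}, i.e. $\frac{d}{dt_+}Q_tf \leq -\alpha^*(|\nabla^- Q_tf|)$ with $\alpha(h)=h^p/p$, and checks that the parameters $u, K$ can be tuned along $t$ so that the derivative is nonpositive. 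Integrating and sending the relevant parameter to the degenerate endpoint $k(0)=0$ recovers, via Bobkov--G\"otze \eqref{BG}, the inequality ${\bf T}_p$; tracking all the constants through the $\beta_p$ factor and the choice of $u$ yields $C_{\mathrm{opt}}\leq \kappa_p E_{\mathrm{opt}}$, with $\kappa_2 = e^2$ obtained by optimizing the free parameter $u\in(1,\infty)$ explicitly when $p=2$. Finally, in the geodesic case, Proposition \ref{Gradients comparisons}(2) gives $|\nabla^+ f| = |\nabla^+_{c}f| \geq |\nabla^-_{c}f|$-type identities allowing one to replace $|\nabla^-_{Kc_p}f|$ by $|\nabla^+ f|$, so $(3')$ is formally weaker than $(3)$ (whence $F_{\mathrm{opt}}\leq E_{\mathrm{opt}}$) yet still suffices to run the same argument, closing the chain $F_{\mathrm{opt}}\leq E_{\mathrm{opt}}\leq D_{\mathrm{opt}}\leq C_{\mathrm{opt}}\leq \kappa_p F_{\mathrm{opt}}$.

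The main obstacle I expect is the bookkeeping in $(3)\Rightarrow(1)$: one has a two-parameter family (the Orlicz exponent $k(t)$ and the $c$-convexity scale $K=K(t)$, plus the auxiliary $u=u(t)$) and must choose all three as functions of $t$ so that the derivative of $H(t)$ vanishes identically, while keeping $K(t)<1/E$ and $u(t)\in(1,1/(K(t)E))$ throughout, and arranging $k(0)=0$ at the endpoint. Getting the sharp constant $\kappa_2=e^2$ requires carrying out the resulting one-dimensional optimization over $u$ exactly rather than with crude bounds. A secondary technical point is verifying that $Q_tf$ lies in the class of functions to which $(3)$ applies and that all the differentiations under the integral sign are justified — but these follow from Lemma \ref{Falpha}, Proposition \ref{deriv} and Lemma \ref{Kuratowski} already established above.
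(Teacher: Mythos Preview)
Your chain $(1)\Rightarrow(2)\Rightarrow(3)$ and the treatment of $(3')$ in the geodesic case match the paper's route (the paper proves $(2)\Rightarrow(3)$ via Lemma \ref{adieupec}, which is exactly the pointwise bound on $f-Q^\lambda f$ you describe). The genuine gap is in $(3)\Rightarrow(1)$. You propose to apply the restricted inequality $(3)$ to $k(t)Q_tf$, asserting that ``$Q_tf$ is automatically $c_p$-convex.'' This is false: $Q_tf=-P_t(-f)$ is $c$-\emph{concave}, so $(3)$, which is stated only for $c$-convex functions, does not apply to it. The paper instead runs the argument along $P_tg$: with $\alpha(h)=h^p/p$ one has $P_tg$ is $t^{1-p}c_p$-convex by construction, hence $f_t:=C^{-1}\ell(t)P_tg$ is $K(t)c_p$-convex with $K(t)=\ell(t)/(Ct^{p-1})$, and $(3)$ applies as soon as $\ell(t)<t^{p-1}$. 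One then shows that $H_g(t)=\frac{C}{\ell(t)}\log\int e^{f_t}\,d\mu$ is \emph{non-decreasing} on an interval $[a_p,1)$ for a suitable \emph{decreasing} $\ell$ with $\ell(1)=0$, and recovers the Bobkov--G\"otze dual form as $t\to 1^-$ (not $t\to 0$); your description with ``$k(0)=0$'' and ``non-increasing'' has the orientation reversed.

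A second, related issue: the bridge from $|\nabla^-_{Kc_p}f_t|$ to the time derivative is not the Hamilton--Jacobi \emph{inequality} \eqref{AGS} (which concerns $|\nabla^-Q_tf|$, a different object), but the exact \emph{formula} of Theorem \ref{HJ 1}. For the cost $c_p$ it reads $\frac{d}{dt_+}P_tg(x)=\frac{p-1}{t^p}\max_{\bar y\in m(t,x)}c_p(x,\bar y)$, and Proposition \ref{Gradients comparisons}(3) gives $|\nabla^-_{K(t)c_p}f_t|(x)\leq K(t)\bigl(\max_{\bar y\in m(t,x)}d(x,\bar y)\bigr)^{p-1}$. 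Together these yield $\frac{1}{pK(t)^{q-1}}|\nabla^-_{K(t)c_p}f_t|^q(x)\leq \frac{t\ell(t)}{(p-1)C}\,\frac{d}{dt_+}P_tg(x)$, which is precisely the direction needed to make the bracket in $\frac{d}{dt_+}H_g$ nonpositive. Your anticipated bookkeeping obstacle is real and is resolved by setting $\theta_p(x)=\inf_{1<u<1/x}\frac{\beta_p(u)-1}{1-xu}$ and choosing $\ell(t)=t^{p-1}\Psi_p^{-1}(-\log t)$ with $\Psi_p(r)=\frac{1}{p-1}\int_0^r\frac{\theta_p(s)}{s(\theta_p(s)+1)}\,ds$; this gives $\kappa_p=\exp\Psi_p(1)$, and for $p=2$ the explicit form $\theta_2(x)=4x/(1-x)^2$ yields $\kappa_2=e^2$.
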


\subsection{From transport-entropy inequalities to $(\tau)$-log-Sobolev inequalities}

Let us recall the following proposition from \cite{GRS11} whose proof relies on a simple Jensen argument.
\begin{lem}
If $\mu$ verifies the transport-entropy property $\mathbf{T}_{c}(C)$, for some continuous cost function $c$ on $X^2$, then the following ($\tau$)-log-Sobolev property holds: for all function $f$, for all $0<\lambda<1/C$, 
\begin{eqnarray}\label{taulogsob}
\ent_{\mu}(e^f)\leq \frac{1}{1-\lambda C} \int  (f-Q^\lambda f) e^f\,d\mu,
\end{eqnarray}
where for all $x\in X$,
$Q^\lambda f(x)=\inf\{f(y)+\lambda c(x,y)\}.$
\end{lem}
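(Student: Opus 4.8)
The plan is to use the Bobkov–Götze dual formulation of $\mathbf{T}_c(C)$ together with a Jensen-type argument applied to a well-chosen tilted function. First I would recall that, by the Bobkov–Götze characterization, the inequality $\mathbf{T}_c(C)$ is equivalent to the statement that
$$\int e^{Q^{1/C} g}\,d\mu \leq \exp\left(\int g\,d\mu\right)$$
for every bounded continuous $g$, where $Q^{1/C}g(x)=\inf_{y}\{g(y)+\tfrac1C c(x,y)\}$; more generally, by homogeneity of the cost in the parameter, $\mathbf{T}_c(C)$ gives $\int e^{\lambda Q^{\lambda} g}\,d\mu \leq \exp\left(\lambda\int g\,d\mu\right)$ for every $0<\lambda\leq 1/C$, since $\lambda Q^\lambda g$ is exactly the Bobkov–Götze functional associated to the rescaled cost $\lambda c$ with constant $1/\lambda\geq C$, and $\mathbf{T}_c(C)$ implies $\mathbf{T}_c(1/\lambda)$.

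Next, fix a function $f$ and $0<\lambda<1/C$, and set $g = f - \frac{1}{\lambda}\log\!\left(\int e^f\,d\mu\right)$ — or more precisely apply the dual inequality to the function $g=\lambda^{-1}\!\left(f-\text{(normalizing constant)}\right)$ chosen so that the exponent on the left becomes $f - Q^\lambda f$ after rearrangement. Concretely, applying the displayed dual inequality to $g$ and using that $Q^\lambda(g+a)=Q^\lambda g + a$ for constants $a$, one obtains after taking logarithms
$$\int \lambda Q^\lambda f \, d\mu \;\geq\; \lambda \log\!\int e^{f}\,d\mu \;-\; \log\!\int e^{\lambda Q^\lambda f - (\lambda-?) f}\,d\mu,$$
which after the correct bookkeeping rearranges into a bound of $\ent_\mu(e^f)$. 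The cleaner route, which is the one I would actually write, is: apply the dual inequality to $g=f$ directly and combine it with the elementary convexity inequality $e^f - e^{Q^\lambda f} \geq e^{Q^\lambda f}(f-Q^\lambda f)$ together with Jensen's inequality $\int e^{Q^\lambda f}d\mu \geq \exp(\int Q^\lambda f \, d\mu)$; balancing these with the dual bound $\int e^{\lambda Q^\lambda f}d\mu\leq e^{\lambda\int f d\mu}$ and optimizing the single free parameter yields exactly $\ent_\mu(e^f)\leq \frac{1}{1-\lambda C}\int (f-Q^\lambda f)e^f\,d\mu$.

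The main obstacle — really the only subtle point — is getting the constant $\frac{1}{1-\lambda C}$ sharp rather than some weaker constant: this requires combining the dual inequality not with $g=f$ but with a tilted version $g = f/(1-\lambda C)$ or similar, so that the nonlinearity of $Q^\lambda$ under scaling is handled correctly, and then using the variational characterization $\ent_\mu(e^f)=\sup\{\int fh\,d\mu : \int e^h d\mu\leq \int e^f d\mu, \dots\}$ or a direct $t\mapsto$ derivative argument. I expect the write-up to be short since, as the text notes, it "relies on a simple Jensen argument"; the proof is essentially the one already given in \cite{GRS11}, so I would either reproduce that one-paragraph argument or simply cite it, noting that the cost function $c$ here is arbitrary continuous and the homogeneity in $\lambda$ is the only structural feature used.
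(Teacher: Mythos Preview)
Your proposal does not actually contain a proof. You correctly sense that the argument is short and hinges on a single Jensen step, but neither of your two sketched ``routes'' is carried through: the inequalities you list (the convexity bound $e^f-e^{Q^\lambda f}\geq e^{Q^\lambda f}(f-Q^\lambda f)$, the bound $\int e^{Q^\lambda f}\,d\mu\geq \exp\int Q^\lambda f\,d\mu$, and the Bobkov--G\"otze dual inequality) do not combine in any stated way to yield the conclusion, and you yourself flag the sharp constant $\tfrac{1}{1-\lambda C}$ as an unresolved ``obstacle'' requiring some further unspecified tilting. The detour through Bobkov--G\"otze is in fact unnecessary and is what is preventing you from closing the argument.

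The argument the paper has in mind (it simply cites \cite{GRS11}) uses the transport inequality \emph{directly}, not its dual form. Set $Z=\int e^f\,d\mu$ and $d\nu_f=Z^{-1}e^f\,d\mu$, so that $H(\nu_f\,|\,\mu)=Z^{-1}\ent_\mu(e^f)$. From the pointwise inequality $Q^\lambda f(x)\leq f(y)+\lambda c(x,y)$, integrating against an optimal coupling with marginals $(\nu_f,\mu)$ and then applying $\mathbf{T}_c(C)$ gives
\[
\int Q^\lambda f\,d\nu_f\;\leq\;\int f\,d\mu+\lambda C\,H(\nu_f\,|\,\mu).
\]
The \emph{only} Jensen step is $\int f\,d\mu\leq \log Z$. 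Combining,
\[
H(\nu_f\,|\,\mu)=\int f\,d\nu_f-\log Z=\int(f-Q^\lambda f)\,d\nu_f+\Bigl(\int Q^\lambda f\,d\nu_f-\log Z\Bigr)\leq \int(f-Q^\lambda f)\,d\nu_f+\lambda C\,H(\nu_f\,|\,\mu),
\]
and rearranging yields $(1-\lambda C)\,\ent_\mu(e^f)\leq \int(f-Q^\lambda f)\,e^f\,d\mu$. There is no optimization step, no dual characterization, and no tilting of $f$.
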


This proves the step $(1) \Rightarrow (2)$ in Theorem \ref{main result improved}.

\subsection{From transport entropy inequalities to log-Sobolev inequalities for $c_{p}$-convex functions}
The general link between the ($\tau$)-log-Sobolev 
property and the restricted log-Sobolev inequality is the following:  if the function $f$ is $c$-convex then the quantity $f-Q^\lambda f$ in the right-hand side of $\eqref{taulogsob}$ can be bounded by a function of $|\nabla_c^- f|$ (see Lemma \ref{adieupec} below). 

From now on, let us  assume that $c=c_p$ is the cost function defined by: for all $x,y$ in $X$,
$c_p(x,y)=d^p(x,y)/p,$ for some $p>1$.

\begin{lem}\label{adieupec} Let $\lambda>0$. If $f$ is a $Kc_p$-convex function bounded from above, and if $0<K<\lambda$, then for all $x\in X$ and all $\bar y$ in the $Kc_p$-subdifferential of $f$ at point $x$, $\partial_{Kc_p} f(x)$,
$$f(x)-Q^\lambda f(x)\leq K  \left(\beta_p\left( \lambda /K\right) -1\right)c_p(x,\bar y),$$
where $Q^\lambda f(x)=\inf_{y\in X}\{f(y)+\lambda c_{p}(x,y)\}$ and for all $u>1$,
$\beta_p(u)= \frac u{[u^{1/(p-1)}-1]^{p-1}}.$\\
Equivalently, with the notation of Proposition \ref{Gradients comparisons}, 
$$f(x)-Q^\lambda f(x) \leq (\beta_{p}(\lambda/K)-1) \frac{1}{pK^{q-1}} |\nabla^-_{Kc_{p}}f|^q(x),$$
where $q=\frac{p}{p-1}.$
\end{lem}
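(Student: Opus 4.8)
The strategy is to exploit the $Kc_p$-convexity of $f$ to get a pointwise lower bound on $Q^\lambda f(x)$, and hence an upper bound on $f(x) - Q^\lambda f(x)$, by testing the infimum defining $Q^\lambda f$ against a well-chosen point on a geodesic (or near-geodesic path) from $x$ towards $\bar y$, where $\bar y \in \partial_{Kc_p} f(x)$. First I would fix $x$ and $\bar y \in \partial_{Kc_p}f(x)$ and write the subdifferential inequality from Definition \ref{subdiff}: for all $z \in X$,
$$f(z) \geq f(x) + Kc_p(x,\bar y) - Kc_p(z,\bar y) = f(x) + \frac{K}{p}\bigl(d^p(x,\bar y) - d^p(z,\bar y)\bigr).$$
Plugging this into the definition of $Q^\lambda f(x) = \inf_z\{f(z) + \lambda c_p(x,z)\}$ gives
$$Q^\lambda f(x) \geq f(x) + \inf_{z\in X}\left\{\frac{K}{p}\bigl(d^p(x,\bar y) - d^p(z,\bar y)\bigr) + \frac{\lambda}{p}d^p(x,z)\right\}.$$
So $f(x) - Q^\lambda f(x) \leq \frac1p\sup_{z}\{K d^p(z,\bar y) - \lambda d^p(x,z)\} - \frac{K}{p}d^p(x,\bar y)$.

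Next I would bound the supremum over $z$ by restricting to points $z$ with $d(x,z) + d(z,\bar y) = d(x,\bar y)$, i.e. points "between" $x$ and $\bar y$; when $X$ is geodesic such points exist for every ratio, and the non-geodesic case is handled by taking $z$ to range over $X$ and using the triangle inequality $d(z,\bar y) \geq d(x,\bar y) - d(x,z)$ to see that it never helps to leave the segment (since increasing $d(z,\bar y)$ beyond $d(x,\bar y) - d(x,z)$ only increases the first term but we are taking a sup — so one must argue more carefully; in fact the clean route is: substitute $r = d(x,z)$ and use $d(z,\bar y)\le d(x,\bar y)+r$... but that gives the wrong sign, so actually the correct estimate uses that $z\mapsto Q^\lambda f$ is an infimum, hence picking ANY $z$ gives an upper bound on $Q^\lambda f(x)-f(x)$ from below is automatic; what we need is the reverse). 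Let me restate: to bound $f(x)-Q^\lambda f(x)$ from above we need a LOWER bound on $Q^\lambda f(x)$, so we need the infimum over $z$ above to be controlled from below, and there $d(z,\bar y)\le d(x,z)+d(x,\bar y)$ gives $\frac{K}{p}(d^p(x,\bar y)-d^p(z,\bar y))\ge \frac Kp(d^p(x,\bar y)-(d(x,z)+d(x,\bar y))^p)$, which combined with $\frac\lambda p d^p(x,z)$ is minimized over $r=d(x,z)\ge 0$ by a one-variable calculus problem. Setting $a = d(x,\bar y)$ and minimizing $\frac{\lambda}{p}r^p - \frac Kp((r+a)^p - a^p)$ over $r\ge 0$: the derivative is $\lambda r^{p-1} - K(r+a)^{p-1}$, vanishing when $\lambda^{1/(p-1)} r = K^{1/(p-1)}(r+a)$, i.e. $r = a K^{1/(p-1)}/(\lambda^{1/(p-1)} - K^{1/(p-1)})$ (using $K<\lambda$). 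Substituting back and simplifying should produce exactly $-\frac{K}{p}(\beta_p(\lambda/K)-1)a^p = -K(\beta_p(\lambda/K)-1)c_p(x,\bar y)$, which is the claimed bound.

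I expect the main obstacle to be the algebraic simplification in the last step: verifying that the value of the one-variable minimization equals $-\frac Kp(\beta_p(\lambda/K)-1)a^p$ with $\beta_p(u) = u/[u^{1/(p-1)}-1]^{p-1}$. This is a routine but slightly delicate computation; writing $s = \lambda/K > 1$ and $\rho = r/a$, the optimal $\rho = 1/(s^{1/(p-1)}-1)$, and one checks $\frac{\lambda}{p}r^p - \frac Kp((r+a)^p-a^p) = \frac{Ka^p}{p}\bigl(s\rho^p - (\rho+1)^p + 1\bigr)$, and at the optimal $\rho$ one has $(\rho+1)^{p-1} = s\rho^{p-1}$, hence $(\rho+1)^p = s\rho^{p-1}(\rho+1) = s\rho^p + s\rho^{p-1}$, so $s\rho^p - (\rho+1)^p + 1 = -s\rho^{p-1} + 1 = 1 - s\rho^{p-1}$. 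With $\rho = 1/(s^{1/(p-1)}-1)$ we get $\rho^{p-1} = 1/(s^{1/(p-1)}-1)^{p-1}$, so $s\rho^{p-1} = s/(s^{1/(p-1)}-1)^{p-1} = \beta_p(s)$, giving $1 - \beta_p(s)$, hence the minimum equals $\frac{Ka^p}{p}(1-\beta_p(s)) = -\frac{Ka^p}{p}(\beta_p(s)-1)$, as desired. Finally, the "equivalently" reformulation is immediate from the definition $|\nabla^-_{Kc_p}f|(x) = K\bigl(\inf_{\bar y\in\partial_{Kc_p}f(x)}d(x,\bar y)\bigr)^{p-1}$ in Proposition \ref{Gradients comparisons}: choosing $\bar y$ realizing (or nearly realizing) this infimum, $Kc_p(x,\bar y) = \frac Kp d^p(x,\bar y)$ and $d^p(x,\bar y) = (d^{p-1}(x,\bar y))^{p/(p-1)} = (K^{-1}|\nabla^-_{Kc_p}f|(x))^q$, so $Kc_p(x,\bar y) = \frac{1}{pK^{q-1}}|\nabla^-_{Kc_p}f|^q(x)$, which yields the second displayed inequality.
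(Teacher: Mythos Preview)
Your proof is correct and follows essentially the same route as the paper: use the $Kc_p$-subdifferential inequality to bound $f(x)-f(z)$, apply the triangle inequality $d(z,\bar y)\le d(x,z)+d(x,\bar y)$, and reduce to the one-variable optimization $\sup_{r\ge 0}\{K(r+a)^p-\lambda r^p\}$, which the paper simply asserts equals $Ka^p\beta_p(\lambda/K)$ while you carry out the computation explicitly. The detour about geodesic points is unnecessary (as you realized mid-argument), and the ``equivalently'' part via the definition of $|\nabla^-_{Kc_p}f|$ is exactly as in the paper.
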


\proof According to Definition \ref{subdiff} of $\partial_{Kc_p} f(x)$ and using the triangular inequality we get, for all $\bar y\in \partial_{Kc_p} f(x)$
\begin{align*}
 f(x)-Q^\lambda f(x)&=\sup_{z\in X} \{f(x)-f(z)-\lambda c_p(z,x)\}\\ 
 &\leq\sup_{z\in X} \{ Kc_p(z,\bar y)- Kc_p(x,\bar y) -\lambda c_p(z,x)\}\\
 &\leq \sup_{z\in X} \{ Kc_p(z,\bar y) -\lambda c_p(z,x)\}  - Kc_p(x,\bar y)\\
  &\leq \frac 1p\sup_{z\in X}\{ K(d(z,x)+ d(x,\bar y))^p -\lambda d^p(z,x)\}  - Kc_p(x,\bar y)\\
  &\leq \frac 1p\sup_{r\geq 0 }\{ K(r+ d(x,\bar y))^p -\lambda r^p\}  - Kc_p(x,\bar y)\\
  &= Kc_p(x,\bar y)  \left(\beta_p\left( \lambda /K\right) -1\right).
 \end{align*} 
Thus optimizing over all possible $\bar{y} \in \partial_{Kc_{p}}f(x)$ yields to the expected result
\begin{align*}
f(x)-Q^\lambda f(x)&\leq  \left(\beta_p\left( \lambda /K\right) -1\right) \inf_{\bar{y} \in \partial_{Kc_{p}}f(x)}    Kc_p(x,\bar y)
=(\beta_{p}(\lambda/K)-1) \frac{1}{pK^{q-1}} |\nabla^-_{Kc_{p}}f|^q(x) .
\end{align*}
\endproof

From this  lemma  the ($\tau$)-log-Sobolev property \eqref{taulogsob} provides immediately the first part of the following statement by setting $u=\lambda/C$.

\begin{prop}\label{transverslog}
If $\mu$ verifies the $(\tau)$-log-Sobolev \eqref{taulogsob} with the cost $c=c_{p}$, $p>1$, then for all $K\in(0,1/C)$ and all function $f$ bounded from above and $Kc_{p}$-convex, it holds
$$\ent_{\mu}(e^f)\leq \frac{\beta_p(u)-1}{(1-KCu)pK^{q-1}} \int  |\nabla^-_{Kc_{p}}f|^q(x)\,e^{f(x)}\,\mu(dx),\quad \forall u\in(1,1/(KC)).$$
Moreover, when $(X,d)$ is geodesic, the same inequality holds with $|\nabla^+ f|$ instead of $|\nabla_{K_{c_{p}}}^-f|$ in the right-hand side.
\end{prop}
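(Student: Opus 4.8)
The plan is to obtain Proposition~\ref{transverslog} as a direct combination of the $(\tau)$-log-Sobolev inequality~\eqref{taulogsob} with the pointwise estimate of Lemma~\ref{adieupec}, and then to deduce the geodesic refinement from the gradient comparisons in Proposition~\ref{Gradients comparisons}.

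First I would fix $K\in(0,1/C)$ and a $Kc_p$-convex function $f$ bounded from above. Since the cost $Kc_p$ is continuous, non-negative, and has compact sublevel sets, Lemma~\ref{not empty} guarantees that $\partial_{Kc_p}f(x)\neq\emptyset$ for every $x\in X$, so that $|\nabla^-_{Kc_p}f|(x)$ is well defined (and finite, as $\partial_{Kc_p}f(x)$ is compact). Given $u\in(1,1/(KC))$, I would set $\lambda:=Ku$, so that $K<\lambda<1/C$, and apply \eqref{taulogsob} with this value of $\lambda$. Lemma~\ref{adieupec} provides the pointwise bound
$$f(x)-Q^\lambda f(x)\leq\bigl(\beta_p(\lambda/K)-1\bigr)\frac{1}{pK^{q-1}}\,|\nabla^-_{Kc_p}f|^q(x)=\bigl(\beta_p(u)-1\bigr)\frac{1}{pK^{q-1}}\,|\nabla^-_{Kc_p}f|^q(x).$$
Inserting it into \eqref{taulogsob}, integrating against $e^f\,d\mu$, and using $1-\lambda C=1-KCu$ yields exactly the claimed inequality; since $u$ was arbitrary in $(1,1/(KC))$, the first part is complete. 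The only care needed here is the bookkeeping showing that the change of variable $u=\lambda/K$ maps the admissible interval $(K,1/C)$ for $\lambda$ onto the stated interval $(1,1/(KC))$ for $u$.

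For the geodesic statement I would write $c=Kc_p$, i.e. $c(x,y)=\alpha(d(x,y))$ with $\alpha(h)=Kh^p/p$, and represent $f=P_c g$ with $g:=Q_c f$; this $g$ is an infimum of continuous functions, hence upper semicontinuous, and it is bounded from above since $g\leq f\leq\sup f$, so Proposition~\ref{Gradients comparisons} applies. Because $\alpha'$ is non-decreasing and $\inf_{\bar y\in\partial_c f(x)}d(x,\bar y)\leq\sup_{\bar y\in\partial_c f(x)}d(x,\bar y)$, we have $|\nabla^-_{c}f|(x)\leq|\nabla^+_{c}f|(x)$ for every $x$, and point~(2) of Proposition~\ref{Gradients comparisons} identifies $|\nabla^+_{c}f|(x)$ with $|\nabla^+ f|(x)$ in the geodesic case. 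Hence $|\nabla^-_{Kc_p}f|^q\leq|\nabla^+ f|^q$ pointwise, and replacing the integrand in the inequality already established by the larger quantity $|\nabla^+ f|^q$ gives the refined inequality, with the same constant.

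I do not expect any genuine obstacle: all the substantive work has been done in Lemma~\ref{adieupec} (computation of $\sup_{r\geq0}\{K(r+d(x,\bar y))^p-\lambda r^p\}$ via convexity) and in Proposition~\ref{Gradients comparisons}. The only slightly delicate points are the constant/range bookkeeping in the reparametrization $u=\lambda/K$, the verification that $g=Q_{Kc_p}f$ is an admissible choice (upper semicontinuous, bounded from above) so that Proposition~\ref{Gradients comparisons} may be invoked, and---if one allows $f$ to take the value $-\infty$---the routine use of the conventions $e^{-\infty}=0$ and $0\log 0=0$ on the level set $\{f=-\infty\}$.
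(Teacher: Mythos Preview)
Your proposal is correct and follows essentially the same route as the paper: plug the pointwise estimate of Lemma~\ref{adieupec} (with the reparametrization $\lambda=Ku$) into the $(\tau)$-log-Sobolev inequality~\eqref{taulogsob} to get the first part, and then invoke Proposition~\ref{Gradients comparisons} to pass from $|\nabla^-_{Kc_p}f|$ to $|\nabla^+ f|$ in the geodesic case. Your write-up is in fact more careful than the paper's, which dispatches the first part in one sentence and does not explicitly check that $g=Q_{Kc_p}f$ meets the hypotheses of Proposition~\ref{Gradients comparisons}.
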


This proves the steps $(2)\Rightarrow (3)$ and $(2)\Rightarrow (3')$ (in the geodesic case) in Theorem \ref{main result improved}.

\proof
Let us justify the statement in the geodesic case. According to Proposition \ref{Gradients comparisons} (applied with the function $\theta(x)=Kx^p/p$), it holds $|\nabla_{Kc_{p}}^- f|\leq |\nabla_{Kc_{p}}^+ f|$ and when the space is geodesic, $|\nabla_{Kc_{p}}^+f |=|\nabla^+f|$, which completes the proof.
\endproof

\subsection{From log-Sobolev inequalities for $c_{p}$-convex functions to transport-entropy inequalities}
In this part we prove that a modified log-Sobolev inequality restricted to the class of $Kc_{p}$-convex functions also implies a transport entropy-inequality. One of the main ingredient of the proof is Theorem \ref{HJ 1}.
\begin{thm}\label{logverstrans}
Let $p\geq 2$. Suppose that for all $K\in( 0,1/C)$ and all $Kc_p$-convex function $f:X\to \R$ bounded from above, it holds 
\begin{equation}\label{eq logverstrans}
\ent_{\mu}(e^f)\leq \frac{\beta_p(u)-1}{(1-KCu)pK^{q-1}} \int  |\nabla^-_{Kc_{p}}f|^q(x)\,e^{f(x)}\,\mu(dx),\quad \forall u\in(1,1/(KC)).
\end{equation}
then $\mu$ verifies the inequality $\mathbf{T}_{p}(\kappa_{p}C)$, where $\kappa_{p}$ is some numerical constant depending only on $p.$
For $p=2$, $\kappa_{2}=e^2.$ Moreover, if the space is geodesic, the same conclusion holds if $|\nabla^-_{Kc_{p}}f|$ is replaced by $|\nabla^+f|$ in the right hand side of \eqref{eq logverstrans}.
\end{thm}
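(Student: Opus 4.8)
The plan is to follow the Hamilton--Jacobi semigroup approach of Bobkov--Gentil--Ledoux, using the restricted log-Sobolev inequality \eqref{eq logverstrans} to control the time-derivative of a suitable functional along the inf-convolution flow, and then to read off the transport-entropy inequality via the Bobkov--G\"otze dual formulation \eqref{BG}. Fix a bounded continuous function $f$ and set, for $t>0$, $Q^\lambda_t$ the inf-convolution with cost $\lambda c_p(x,y) = \lambda d^p(x,y)/p$; after rescaling one may as well work with a single scaling parameter. The key observation is that $x \mapsto Q^{\lambda} f(x)$ is exactly a $\lambda c_p$-convex function (it is of the form $P_c g$ with $g=-f$ and $c=\lambda c_p$), so the restricted inequality \eqref{eq logverstrans} \emph{applies to it}. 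This is where the whole scheme hinges: the left-hand side of the log-Sobolev is $\ent_\mu(e^{k Q^\lambda f})$ for an appropriate exponent $k$, and the right-hand side involves $|\nabla^-_{\lambda c_p} Q^\lambda f|^q$, which by Theorem \ref{HJ 1} (equivalently Theorem \ref{HJ}, via $\beta(h)=\alpha^*(\alpha'(h))$ with $\alpha(h)=h^p/p$, $\alpha^*(u)=u^q/q$) is controlled by $\pm d/dt$ of $Q^\lambda_t f$. Thus, exactly as in the proof of Theorem \ref{thm-hyper}, differentiating $H(t) = \frac 1{k(t)}\log\int e^{k(t) Q^\lambda_t f}\, d\mu$ via Proposition \ref{deriv}, substituting \eqref{eq logverstrans} to bound the entropy term and \eqref{AGS} (i.e.\ $\frac{d}{dt_+}Q_t f \le -\alpha^*(|\nabla^- Q_t f|)$) to match the derivative term, one arranges the parameters $k(t)$, $u$, $K$ so that $H'(t)\le 0$.

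Concretely, I would choose the exponent profile $k(t)$ so that the combination of the constant $\frac{\beta_p(u)-1}{(1-KCu)pK^{q-1}}$ from \eqref{eq logverstrans} and the factor $k(t)^{q}$ coming from the homogeneity $\alpha^*(k\,v) = k^q \alpha^*(v)$ cancels against the ODE $k'(t)$ dictated by Proposition \ref{deriv}. Since $\alpha(h)=h^p/p$ has $r_\alpha = p_\alpha = p$, the exponent $k(t)$ is a single power $\big(1 + C^{-1}(t-t_o)/(p-1)\big)^{p-1}$ — there is no matching issue between two regimes. One runs the flow from $k(0)=0$ (the Bobkov--G\"otze endpoint) up to some fixed time; monotonicity of $H$ then gives $\int e^{Q^1_1 f}\,d\mu \le \exp(\int f\, d\mu)$ after rescaling, i.e.\ ${\bf T}_p(\kappa_p C)$ for a constant $\kappa_p$ produced by the range over which the ODE can be integrated. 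The optimization over the free parameter $u \in (1, 1/(KC))$ — which is present because \eqref{eq logverstrans} is really a family of inequalities — is used to extract the best achievable constant; sending $u \downarrow 1$ makes $\beta_p(u)\to\infty$ and $u\to 1$ competing, so one picks $u$ balancing these (for $p=2$, $\beta_2(u)=u/(u-1)^{?}$... one finds $\kappa_2 = e^2$). In the geodesic case, by Proposition \ref{Gradients comparisons}(2), $|\nabla^-_{Kc_p} f| \le |\nabla^+_{Kc_p} f| = |\nabla^+ f|$, so the hypothesis with $|\nabla^+ f|$ on the right is \emph{weaker} and the same argument goes through verbatim.

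The technical points requiring care, none of them deep: (a) justifying the differentiation of $H$ and the interchange of limits — this is Proposition \ref{deriv} together with the regularity statements of Proposition \ref{prop Q_{t}} and Lemma \ref{Falpha}, plus the usual $(1-\varepsilon)$-truncation trick to pass from Lipschitz to bounded continuous $f$ when $p>1$ (here $\ell=\infty$, so the Lipschitz condition is empty and this is automatic); (b) checking that $Q^\lambda_t f$ stays $\lambda c_p$-convex along the flow and that the relevant subdifferential/slope identities of Section 2 remain applicable at every $t>0$ and $x\in X$ — this is precisely what Theorem \ref{HJ 1} buys us without any measure-zero exceptional set; (c) the bookkeeping that turns the ODE solution into the explicit constant $\kappa_p$, and the verification $\kappa_2=e^2$.

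\textbf{Main obstacle.} The genuine difficulty — as opposed to bookkeeping — is step (b) combined with the \emph{direction} of the inequalities. The restricted log-Sobolev \eqref{eq logverstrans} naturally produces $|\nabla^-_{Kc_p} Q^\lambda f|$ on its right-hand side, and one must relate this to $\frac{d}{dt_+} Q_t f$; but Theorem \ref{HJ 1} controls $\frac{d}{dt_+}$ by the \emph{max} distance in $m(t,x)$ (hence an upper bound via $|\nabla^+|$) and $\frac{d}{dt_-}$ by the \emph{min} distance (hence $|\nabla^-|$, up to the inequalities in Proposition \ref{Gradients comparisons}(3), which are not known to be equalities even in the geodesic case). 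Threading the argument so that the $\ge/\le$ signs all point the right way — using \eqref{AGS}, which is the left-slope version and is exactly the one compatible with \eqref{eq logverstrans} — is the delicate part, and is the reason the statement is phrased with $|\nabla^-_{Kc_p} f|$ in the non-geodesic case rather than with $|\nabla^- f|$.
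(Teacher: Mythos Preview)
There are two genuine gaps in your scheme, one conceptual and one structural.

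\textbf{First, a sign/direction error that blocks the argument from starting.} You claim that $Q^\lambda f$ is $\lambda c_p$-convex because it equals $P_{\lambda c_p}(-f)$. But $P_{\lambda c_p}(-f)(x)=\sup_y\{-f(y)-\lambda c_p(x,y)\}=-Q^\lambda f(x)$, so it is $-Q^\lambda f$ that is $\lambda c_p$-convex; $Q^\lambda f$ itself is $\lambda c_p$-\emph{concave}. Since the hypothesis \eqref{eq logverstrans} applies only to $Kc_p$-convex functions, you cannot feed $k(t)Q_t f$ into it. The paper resolves this by running the \emph{sup}-convolution flow instead: set $P_t g(x)=\sup_y\{g(y)-t^{1-p}c_p(x,y)\}$ and $f_t=C^{-1}\ell(t)P_t g$; then $f_t$ is genuinely $K(t)c_p$-convex with $K(t)=\ell(t)/(Ct^{p-1})$, and \eqref{eq logverstrans} applies. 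The Hamilton--Jacobi input is then the \emph{equality} of Theorem~\ref{HJ 1} for $dP_t g/dt_+$ (in terms of the maximal distance in $m(t,x)$), combined with the bound $|\nabla^-_{K(t)c_p}f_t|^q/(pK(t)^{q-1})\le K(t)\max_{\bar y\in m(t,x)}c_p(x,\bar y)$ from Proposition~\ref{Gradients comparisons}(3). This is the correct threading of the inequalities; your ``AGS inequality for $Q_t$'' plays no role here.

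\textbf{Second, the exponent profile is not the power law from Theorem~\ref{thm-hyper}.} You propose $k(t)=(1+C^{-1}(t-t_o)/(p-1))^{p-1}$, reasoning that $r_\alpha=p_\alpha=p$. That would work if the right-hand constant in the log-Sobolev were a fixed $C$; it is not. The constant in \eqref{eq logverstrans} depends on the convexity parameter $K$, and along the flow $K=K(t)=\ell(t)/(Ct^{p-1})$ is itself a function of $t$. After optimizing over $u$ one gets $\theta_p(s)=\inf_{1<u<1/s}(\beta_p(u)-1)/(1-su)$ evaluated at $s=\ell(t)/t^{p-1}$, and the monotonicity condition on $H_g$ becomes the nonlinear ODE
\[
\theta_p\!\left(\frac{\ell(t)}{t^{p-1}}\right)\frac{t}{p-1}+\frac{\ell(t)}{\ell'(t)}=0,
\]
which is \emph{not} solved by a power of $t$. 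The paper integrates it via the substitution $v(t)=\ell(t)/t^{p-1}$ and the primitive $\Psi_p(r)=\frac{1}{p-1}\int_0^r \frac{\theta_p(s)}{s(\theta_p(s)+1)}\,ds$ (finite for $p\ge 2$ by Lemma~\ref{legueanvichy}), obtaining $\ell(t)=t^{p-1}\Psi_p^{-1}(-\ln t)$ on $[a_p,1]$ with $a_p=e^{-\Psi_p(1)}$. The constant is then $\kappa_p=1/\ell(a_p)=\exp\!\big(\int_0^1 \frac{\theta_p(s)}{s(\theta_p(s)+1)}\,ds\big)$; for $p=2$ one has $\theta_2(x)=4x/(1-x)^2$ and the integral evaluates to $2$, giving $\kappa_2=e^2$. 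Your sketch misses this entire layer.

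Your remark on the geodesic case is essentially correct: by Proposition~\ref{Gradients comparisons}(2) one has $|\nabla^-_{Kc_p}f|\le|\nabla^+_{Kc_p}f|=|\nabla^+ f|$, so the hypothesis with $|\nabla^+ f|$ is weaker and the same proof applies (indeed, in the paper the key inequality \eqref{a changer} becomes an equality).
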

This proves the steps $(3)\Rightarrow (1)$ and $(3')\Rightarrow (1)$ (in the geodesic case) and completes the proof of Theorem \ref{main result improved}.
\proof
For any bounded continuous function $g$, we define the function $P_{t}g$ as follows
$$
P_{t}g(x)=\sup_{y\in X}\left\{ g(y) - \frac{1}{t^{p-1}} c_{p}(x,y)\right\}.
$$
Let $\ell : [a,1]\to (0,+\infty)$ be a decreasing function of class 
$\mathcal{C}^1$ defined on some interval $[a,1]$ with $a>0$ and such that $\ell (1)=0.$
For all bounded continuous $g$ define 
$H_{g}(t)=\frac{C}{\ell(t)} \log\left(\int e^{C^{-1}\ell(t)P_{t}g}\,d\mu\right)$, $t\in [a,1).$ 
If all the $H_{g}$'s were non-decreasing, then it would hold that $H_{g}(a)\leq \lim_{t\to 1^-} H_{g}(t)=\int P_{1}f\,d\mu.$ Since $g\leq P_{a}g$, we would get
$$
\int e^{C^{-1}\ell(a) g}\,d\mu \leq e^{C^{-1}\ell(a) \int P_{1} g\,d\mu}
$$
which in turn, according to Bobkov and G\"otze characterization Theorem, would prove that $\mu$ verifies 
$\mathbf{T}_{p}(C/\ell(a)).$

Hence, our aim is to construct a function $\ell$ such that all the $H_{g}$'s are non-decreasing. 
Set $f_{t}=C^{-1}\ell(t)P_{t}g$.
According to Proposition \ref{deriv}, $H_{g}$ is continuous and differentiable on the right and
$$\frac{d}{dt_{+}} H_{g}(t)=\frac{C\ell'(t)}{\ell^2(t) \int e^{f_{t}}\,d\mu} \left[\ent_{\mu}\left(e^{f_{t}}\right)+\frac{\ell(t)^2}{C\ell'(t)} \int \frac{dP_{t}g }{dt_{+}} e^{f_{t}}\,d\mu\right].$$
Since $\ell'<0$, all we have to  show is that the term into brackets is non-positive. 
For all $t>0$, the function $f_{t}$ is $K(t)c_{p}$-convex, with $K(t)=\frac{\ell(t)}{Ct^{p-1}}$. Hence, 
for all $t$ such that $\ell(t)< t^{p-1}$ and all  $u\in(1,1/(CK(t))$,
$$
\ent_{\mu}(e^{f_{t}})\leq \frac{\beta_p(u)-1}{(1-K(t)Cu)pK(t)^{q-1}}\int |\nabla_{K(t)c_{p}}^-(f_{t})|^q(x)e^{f_{t}(x)}\,\mu(dx) .
$$
Since $f_{t}$ is $K(t)c_{p}$-convex, it follows from Proposition \ref{Gradients comparisons} (applied with  $\alpha(h)=K(t)h^p/p$) that
$$
|\nabla^-_{K(t)c_{p}} f_{t}|(x) = K(t) \left(\min_{\bar{y} \in \partial_{K(t)c_{p}}f_{t}(x)} d(x,\bar{y})\right)^{p-1} \leq K(t) \left(\max_{\bar{y} \in m(t,x)} d(x,\bar{y})\right)^{p-1},
$$
denoting by $m(t,x)$ the set of points $\bar{y}$ where the supremum defining $P_{t}g$ is reached.
As a result, it holds
$$\frac{1}{p K(t)^{q-1}} |\nabla^-_{K(t)c_{p}} f_{t}|^q(x) \leq K(t) \max_{\bar{y}\in m(t,x)} c_{p}(x,\bar{y}).$$
On the other hand, according to Proposition \ref{HJ 1}, 
$$
\frac{dP_{t}g}{dt_{+}}(x)=\frac{p-1}{t^p}\max_{\bar{y}\in m(t,x)} c_{p}(x,\bar{y}).
$$
Therefore
\begin{equation}\label{a changer}
\frac{1}{p K(t)^{q-1}} |\nabla^-_{K(t)c_{p}} f_{t}|^q(x) \leq \frac{K(t)t^p}{(p-1)} \frac{dP_{t}g}{dt_{+}}(x)=\frac{t\ell(t)}{(p-1)C}\frac{dP_{t}g}{dt_{+}}(x).
\end{equation}
So, for all $t>0$ with $\ell(t)<t^{p-1}$ it holds
\begin{multline*}
\left[\ent_{\mu}\left(e^{f_{t}}\right)+\frac{\ell(t)^2}{C\ell'(t)} \int \frac{dP_{t}g }{dt_{+}} e^{f_{t}}\,d\mu\right] 
\leq 
\frac{\ell(t)}{C} \left[\theta_{p}\left(\frac{\ell(t)}{t^{p-1}}\right) \frac{t}{p-1}+\frac{\ell(t)}{\ell'(t)}\right]\int\frac{dP_{t}g }{dt_{+}} e^{f_{t}}\,d\mu,
\end{multline*}
where the function $\theta_{p}$ is defined by $\theta_{p}(x)= \inf_{1<u<1/x}\left\{\frac{\beta_p(u)-1}{1- x u}\right\}$, for  $x<1$. Observe that $\theta_{p}$ is finite on $[0,1[.$ Consider the function 
$$\Psi_{p}(r)=\frac{1}{p-1}\int_{0}^{r} \frac {\theta_p(s)}{s(\theta_p(s)+1)}\,ds,\qquad \forall r\in [0,1].$$
According to Lemma \ref{legueanvichy} below, since $p\geq 2$,  the function $\Psi_{p}$ is well defined, increasing and of class $\mathcal{C}^1$ on $(0,1).$
Define $v(t)=\Psi_{p}^{-1} (-\ln(t)),$ for all $t\in[ a_{p}, 1]$, with $a_{p}= \exp\left(-\Psi_{p}(1)\right)$. The function $v$ is increasing  and $v(t)\in[0,1]$ for all $t\in[ a_{p}, 1]$. Finally, define $\ell_{p}(t) = t^{p-1}v(t)$, for all $t\in [a_{p},1].$ A simple calculation shows that 
$$\theta_{p}\left(\frac{\ell_{p}(t)}{t^{p-1}}\right) \frac{t}{p-1}+\frac{\ell_{p}(t)}{\ell_{p}'(t)} = 0,\qquad \forall t\in (a_{p},1).$$
We conclude that $\mu$ verifies the inequality $\mathbf{T}_{p}$ with the constant $$\frac{C}{\ell_{p}(a_{p})}=C\exp\left(\int_{0}^1\frac {\theta_p(s)}{s(\theta_p(s)+1)}\,ds\right)=C\kappa_p.$$
In the particular case $p=2$, one has $\theta_{2}(x)=\frac{4x}{\left(1-x\right)^2}$, and it is easy to check that $\kappa_{2}=e^{2}$.

It remains to consider the geodesic case. In this case, the inequality \eqref{a changer} is replaced by the equality
$$\frac{1}{p K(t)^{q-1}} |\nabla^+ f_{t}|^q(x) = \frac{K(t)t^p}{(p-1)} \frac{dP_{t}g}{dt_{+}}(x),
$$
and the rest of the proof remains unchanged.
\endproof

\begin{lem}\label{legueanvichy}
The function $s \mapsto \phi(s)=\frac {\theta_p(s)}{s(\theta_p(s)+1)}$ is continuous on $(0,1)$. Moreover,  $\phi(s)$ goes to $1$ as $s$ goes to 1 and 
$$\phi(s)=\frac {p^{p/(p-1)}}{ s^{(p-2)/(p-1)}}(1+\varepsilon(s)),$$ with $\varepsilon(s)\to 0$ as $s\to 0$.
\end{lem}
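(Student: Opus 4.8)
The plan is to reduce the whole statement to an analysis of $\theta_p$ alone, since
$$\phi(s)=\frac{\theta_p(s)}{s(\theta_p(s)+1)}=\frac1s\cdot\frac{\theta_p(s)}{\theta_p(s)+1}$$
is an elementary continuous function of the pair $(s,\theta_p(s))$, finite and nonzero whenever $0<\theta_p(s)<\infty$ and $s>0$. First I would record a convenient form of $\beta_p$: since
$$\beta_p(u)=\frac{u}{[u^{1/(p-1)}-1]^{p-1}}=\bigl(1-u^{-1/(p-1)}\bigr)^{-(p-1)},$$
the function $\beta_p$ is $\mathcal{C}^1$ and strictly decreasing on $(1,\infty)$, with $\beta_p(1^+)=+\infty$, $\beta_p(+\infty)=1$ and $\beta_p>1$. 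Writing $g_s(u)=\frac{\beta_p(u)-1}{1-su}$ on $(1,1/s)$, one checks $g_s(u)\to+\infty$ both as $u\to1^+$ (numerator blows up, $1-su\to1-s>0$) and as $u\to(1/s)^-$ (denominator vanishes, $\beta_p(u)-1\to\beta_p(1/s)-1>0$); hence the infimum defining $\theta_p(s)$ is attained at an interior point and $0<\theta_p(s)<\infty$ for each $s\in(0,1)$.

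For continuity on $(0,1)$, note first that $\theta_p$ is upper semicontinuous, being an infimum of the continuous functions $s\mapsto g_s(u)$ ($u>1$ fixed, with the convention $g_s(u)=+\infty$ for $s\ge1/u$). For lower semicontinuity I would argue by equicoercivity: fixing a compact $[s_0,s_1]\subset(0,1)$, the bounds $g_s(u)\ge\beta_p(u)-1$ (which pushes minimizers away from $u=1$) and $g_s(u)\ge\frac{\beta_p(1/s_0)-1}{1-su}$ (valid for $u<1/s\le1/s_0$ by monotonicity of $\beta_p$, which pushes them away from $u=1/s$), together with an explicit upper bound $\theta_p(s)\le g_s(u_0)=:M$ at $u_0=\tfrac12(1+1/s_1)$, force every minimizer of $g_s$ into a fixed compact subinterval $[u_-,U]\subset(1,\infty)$ on which $1-su$ stays bounded below, uniformly for $s\in[s_0,s_1]$. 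Joint continuity of $(s,u)\mapsto g_s(u)$ then yields lower semicontinuity of $\theta_p$ on $[s_0,s_1]$, hence $\theta_p$ is continuous on $(0,1)$ and so is $\phi$.

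For the limit at $s=1$: monotonicity of $\beta_p$ and $1-su<1$ give $g_s(u)\ge\beta_p(u)-1\ge\beta_p(1/s)-1$ for all $u\in(1,1/s)$, so $\theta_p(s)\ge\beta_p(1/s)-1\to+\infty$ as $s\to1^-$ and therefore $\phi(s)=\frac1s\cdot\frac{1}{1+1/\theta_p(s)}\to1$. For the limit at $s=0$: substituting $u=\sigma/s$, i.e. $\sigma=su\in(s,1)$, yields $\theta_p(s)=\inf_{s<\sigma<1}\frac{\beta_p(\sigma/s)-1}{1-\sigma}$; plugging in the expansion $\beta_p(u)-1=(p-1)u^{-1/(p-1)}(1+o(1))$ as $u\to\infty$, and localizing the minimizing $\sigma$ away from $0$ and from $1$ uniformly for small $s$, I would get
$$\theta_p(s)=(p-1)\,s^{1/(p-1)}\Bigl(\inf_{0<\sigma<1}\frac{\sigma^{-1/(p-1)}}{1-\sigma}\Bigr)(1+o(1))=p^{p/(p-1)}\,s^{1/(p-1)}\,(1+o(1)),$$
the infimum being attained at $\sigma=1/p$ with value $\frac{p^{p/(p-1)}}{p-1}$. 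In particular $\theta_p(s)\to0$, so $\theta_p(s)+1\to1$ and
$$\phi(s)=\frac{\theta_p(s)}{s(\theta_p(s)+1)}=\frac{p^{p/(p-1)}\,s^{1/(p-1)}}{s}\,(1+o(1))=\frac{p^{p/(p-1)}}{s^{(p-2)/(p-1)}}\,(1+o(1)),$$
which is the claimed asymptotic (and for $p=2$ one recovers $\theta_2(s)=4s/(1-s)^2$, hence $\phi(s)=4/(1+s)^2$, by direct computation of the infimum).

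I expect the $s\to0^+$ asymptotic to be the main obstacle: one has to upgrade the pointwise expansion of $\beta_p$ at infinity into a control of the \emph{rescaled} infimum that is uniform in $s$, so that neither the behaviour of $\sigma\mapsto\sigma^{-1/(p-1)}/(1-\sigma)$ at the endpoints of $(0,1)$ nor the $o(1)$ error in $\beta_p(u)-1$ contaminates the leading constant $p^{p/(p-1)}$. Concretely one shows the minimizing $\sigma$ can approach neither $0$ (where $\sigma^{-1/(p-1)}$ is too large) nor $1$ (where $1-\sigma$ is too small), pins it to a compact subinterval of $(0,1)$, and estimates the error uniformly there. Once the attainment and this uniform localization of minimizers are in hand, the continuity statement and the $s\to1^-$ limit are routine.
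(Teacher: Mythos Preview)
Your proof is correct and complete in outline, but it takes a genuinely different route from the paper's argument.

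The paper proceeds by writing down the first-order optimality condition for the minimizer $u(s)$ of $g_s$, which after simplification becomes the explicit equation
\[
u(s)^{p/(p-1)}-\bigl(u(s)^{1/(p-1)}-1\bigr)^{p}=\frac{1}{s}.
\]
From this single relation they read off uniqueness and continuity of $u(s)$, the limits $u(s)\to 1$ as $s\to 1$ and $u(s)\to\infty$ as $s\to 0$, and by a Taylor expansion the key fact $su(s)\to 1/p$. They then insert $u(s)$ back into an explicit expression for $\phi(s)$ to obtain the asymptotic.

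You bypass the Euler--Lagrange equation entirely. For continuity you use upper semicontinuity of the infimum plus an equicoercivity argument that confines minimizers to a compact set; for $s\to 1^-$ you use the crude but effective lower bound $\theta_p(s)\ge\beta_p(1/s)-1\to+\infty$; and for $s\to 0^+$ you rescale $u=\sigma/s$ and reduce to the explicit one-variable minimum $\inf_{\sigma\in(0,1)}\sigma^{-1/(p-1)}/(1-\sigma)=p^{p/(p-1)}/(p-1)$ at $\sigma=1/p$, which is exactly the paper's $su(s)\to 1/p$ in disguise. Your approach is softer and does not require uniqueness of the minimizer or any algebraic manipulation of $\beta_p'$; the paper's approach is more computational but gives the continuity of $u(s)$ itself essentially for free once the implicit equation is written down. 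Both lead to the same leading constant $p^{p/(p-1)}$ with comparable effort; the localization step you flag as the main obstacle is real but routine once the rescaled integrand is seen to blow up at both endpoints of $(0,1)$.
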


\proof
After some computations, it is  easily to check that for $s\in(0,1)$, the infimum $\theta_p(s)$ is reached at some unique point $u=u(s)\in (1,1/s)$ such that 
$$ \beta'_p(u)(1-su)+s(\beta_p(u)-1)=0,$$
or equivalently 
$$ u(s)^{p/(p-1)}-\left( u(s)^{1/(p-1)}-1\right)^p=1/s.$$
It follows from this equality that $u(s)$ is continuous on (0,1), $u(s)\to 1$ as $s\to 1$ and $u(s)\to +\infty $ as $s\to 0$.
As a first consequence, $\phi$ is continuous on $(0,1)$.   

By a Taylor expansion at point 0, one has 
\begin{equation*}
\frac{1}{su(s)^{p/(p-1)}}= 1-\left(1-\frac{1}{u(s)^{1/(p-1)}}\right)^p=\frac{p}{u(s)^{1/(p-1)}}(1+\varepsilon(s)),
\end{equation*}
with $\varepsilon(s)\to 0$ as $s\to 0$. It follows that $su(s)\to 1/p$ as $s\to 0$.
From all this observations, we get 
$$\phi(s)=\frac{1-\left(1-u(s)^{-1/(p-1)}\right)^{p-1}}{s\left(1-su(s)\left(1-u(s)^{-1/(p-1)}\right)^{p-1}\right)}=\frac {p^{p/(p-1)}}{ s^{(p-2)/(p-1)}}(1+\varepsilon(s)),$$
with $\varepsilon(s)\to 0$ as $s\to 0$. 
Since $u(s)\to 1$ as $s\to 1$ we  easily get that $\phi(s)\to 1$ as $s\to 1$.
\endproof

\appendix

\section{Proof of Lemma \ref{bysarko1}, Lemma \ref{Kuratowski}, Proposition \ref{deriv} and Lemma \ref{Falpha}}

In this appendix we collect all the technical proofs of Lemmas \ref{bysarko1}, \ref{Kuratowski} and \ref{Falpha} and of Proposition \ref{deriv}.

\proof[Proof of Lemma \ref{bysarko1}]
Let $\bar{y}\in \partial_cf(x)$. According to the definition of the $c$-subdifferential, 
$$f(z)-f(x)\geq L(x-\bar{y})-L(z-\bar{y}),\qquad \forall z\in \R^m.$$
Let $z= x+\varepsilon u$ with $\varepsilon >0$ and $u\in \R^m$. Since $L$ and $f$ are smooth functions at point $x$, we get as $\varepsilon$ tends to $0$, for all $u\in \R^m$,
$$u\cdot\nabla f(x) \geq -u\cdot \nabla L(x-\bar{y}),$$
and therefore $\nabla f(x)=- \nabla L(x-\bar{y}).$ Let $v_o=x-\bar{y}$ and $u_o=\nabla L(v_0)$, by the convexity property of $L$, 
\begin{equation}\label{eq:subgradient}
L(v)\geq L(v_o) +u_o\cdot(v-v_o),\qquad \forall v\in \R^m,
\end{equation}
or equivalently 
$L(v_o)\leq u_o\cdot v_o-L^*(u_o)$. Since $L(v_o)=\sup_{u\in \R^m}\{u\cdot v_o-L^*(u)\}$, it follows that the derivative of $u\mapsto u\cdot v_o - L^*(u)$ vanishes at $u_o$, and so $v_o=\nabla L^*(u_o)$.
Finally, $x-\bar{y}=\nabla L^*(u_o)=\nabla L^*(-\nabla f(x))$, which completes the proof.
\endproof

\proof[Proof of Lemma \ref{Kuratowski}]
(1) The function $h:y\mapsto g(y)-t\alpha\left(d(x,y)/t\right)$ is upper semicontinuous, bounded from above and its level sets $\{h\geq r\}$ $r\in \R$ are compact. It follows that $h$ reaches its supremum and so $m(t,x)=\{h\geq \sup h\}$ is not empty and compact.\\
(2) Let $h_n(y)=g(y)-t_n\alpha\left(\frac{d(x_n,y)}{t_n}\right),$ $y\in X.$ The sequence of functions $h_n$ converges pointwise to the function $h$, and the convergence is uniform on each bounded set. Since $g$ is bounded from above by some constant $r\in \R$, it holds
\begin{equation}\label{eq Kuratowski}
r-t_n\alpha\left(\frac{d(x_n,y_n)}{t_n}\right)\geq g(y_n)-t_n\alpha\left(\frac{d(x_n,y_n)}{t_n}\right)\geq g(y)-t_n\alpha\left(\frac{d(x_n,y)}{t_n}\right),\qquad \forall y\in X.
\end{equation}
Since $(x_n)_{n\in \N}$ is bounded and $\lim_{n\rightarrow \infty}t_n=t>0$, we conclude that $(y_n)_{n\in \N}$ is a bounded sequence. As balls are supposed to be compact, $(y_n)_{n\in \N}$ has converging subsequences. Passing to the limit into the inequality \eqref{eq Kuratowski} along a converging subsequence of $(y_n)_{n\in \N}$, yields to the conclusion that any limit point $\bar{y}$ of $(y_n)_{n\in \N}$ belongs to $m(t,x).$
\endproof

Let us turn to the proof of Proposition \ref{deriv}. The proof requires some regularity properties of $Q_tf$ in the $t$ variable that are gathered in the following proposition. 

\begin{prop}\label{prop Q_{t}}
Let $f$ be a bounded lower semicontinuous function on $X$; define for all $t>0$ and $x\in X$ $Q_tf (x) = \inf\left\{f(y)+t\alpha\left(\frac{d(x,y)}{t}\right)\right\}$ and let $m(t,x)$ denote the set of points where this infimum is attained. The following properties hold
\begin{enumerate}
\item For all $x\in X$, 
$$ 
m(t,x)\subset B\left(x, t\alpha^{-1}\left({\mathrm{Osc}(f)}/t\right)\right).
$$
\item For all $t,h>0$,
$$
\frac{1}{h}\sup_{x\in X} |Q_{t+h}f(x)-Q_{t}f(x)|
\leq 
\beta\left(\alpha^{-1}\left({\mathrm{Osc}(f)}/t\right)\right).$$
\item If $\alpha(h)/h\to\infty$, when $h\to \infty$, then for all bounded continuous function $f$ and for all $x\in X$, 
$$
\lim_{t\to 0^+} Q_{t}f(x)= f(x).
$$ 
and
$$
\liminf_{t\rightarrow 0^+} \frac{Q_tf(x)-f(x)}t
\geq 
-\alpha^*(|\nabla^- f|(x)).
$$
 If $\alpha(h)/h\to\ell \in \R^+$, when $h\to \infty$, the same conclusions hold for all function $f$ with $\mathrm{Lip}(f)<\ell.$
\item  Let $\mu$ be a probability measure and $\varphi:(0,+\infty)\times X \to \R$ be such that $|\varphi|\leq M$ for some $M>0$ and $\lim_{t\to0+}\varphi(t,x)=\psi(x)$ for all $x\in X$. If $\alpha(h)/h \to\infty$ when $h\to\infty$ and if $f$ is such that $\mathrm{Lip}(f,r)<+\infty$ for some $r>0$, then
$$
\limsup_{t \to 0} \int \frac{f-Q_t f}{t}\varphi(t,x)\,d\mu \leq \int \alpha^*(|\nabla^- f|(x))\psi(x)\,d\mu.
$$
The same conclusion holds if $\alpha(h)/h \to \ell \in \R^+,$ when $h\to\infty,$ and $\mathrm{Lip}(f)<\ell.$
\end{enumerate}
\end{prop}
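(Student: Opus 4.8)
The plan is to prove the four items in order, item (1) being the engine for the other three.

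\emph{Item (1)} is immediate: taking $y=x$ in the infimum and using $\alpha(0)=0$ gives $Q_tf(x)\le f(x)$, so for $\bar y\in m(t,x)$ one has $t\alpha(d(x,\bar y)/t)=Q_tf(x)-f(\bar y)\le f(x)-f(\bar y)\le\mathrm{Osc}(f)$; since $\alpha$ is an increasing continuous bijection of $\R^+$ (it is convex with $\alpha(0)=0$, hence $\alpha(h)/h$ is non-decreasing and $\alpha(h)\to\infty$), inverting yields $d(x,\bar y)\le t\alpha^{-1}(\mathrm{Osc}(f)/t)$. Non-emptiness and compactness of $m(t,x)$ is Lemma~\ref{Kuratowski} applied to $-f$, since $Q_tf=-P_t(-f)$.

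\emph{Item (2).} Rather than computing directly, I would invoke Theorem~\ref{HJ 1} applied to $-f$: it gives that $s\mapsto Q_sf(x)$ has, at every $s>0$, finite one-sided derivatives, equal to $-\beta\bigl(\tfrac1s d(x,\bar y)\bigr)$ for a suitable $\bar y\in m(s,x)$; in particular $s\mapsto Q_sf(x)$ is continuous. By item (1), $\tfrac1s d(x,\bar y)\le\alpha^{-1}(\mathrm{Osc}(f)/s)$, and since $\beta$ and $\alpha^{-1}$ are non-decreasing, $\bigl|\tfrac{d}{ds_\pm}Q_sf(x)\bigr|\le\beta\bigl(\alpha^{-1}(\mathrm{Osc}(f)/t)\bigr)=:M$ for all $s\in[t,t+h]$. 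A continuous function on $[t,t+h]$ whose Dini derivatives are bounded in modulus by $M$ is $M$-Lipschitz there (apply this to $s\mapsto Q_sf(x)\pm Ms$), so $|Q_{t+h}f(x)-Q_tf(x)|\le Mh$, with $M$ independent of $x$.

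\emph{Item (3).} I would first show the minimizers $\bar y_t\in m(t,x)$ satisfy $\bar y_t\to x$ as $t\to0^+$. When $\alpha(h)/h\to\infty$ this follows from item (1) and the identity $t\alpha^{-1}(\mathrm{Osc}(f)/t)=\mathrm{Osc}(f)\,r/\alpha(r)$ with $r=\alpha^{-1}(\mathrm{Osc}(f)/t)\to\infty$; when $\alpha(h)/h\to\ell<\infty$ and $\mathrm{Lip}(f)<\ell$, one uses instead $t\alpha(d(x,\bar y_t)/t)\le f(x)-f(\bar y_t)\le\mathrm{Lip}(f)\,d(x,\bar y_t)$, i.e. $\alpha(r)/r\le\mathrm{Lip}(f)$ with $r=d(x,\bar y_t)/t$, which forces $r$ to stay bounded. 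Then $f(x)\ge Q_tf(x)=f(\bar y_t)+t\alpha(d(x,\bar y_t)/t)\ge f(\bar y_t)\to f(x)$ gives $Q_tf(x)\to f(x)$. For the $\liminf$, fix $\varepsilon>0$ and $\delta>0$ with $[f(y)-f(x)]_-\le(|\nabla^-f|(x)+\varepsilon)d(x,y)$ whenever $d(x,y)\le\delta$; for $t$ small $d(x,\bar y_t)\le\delta$, hence $\tfrac{Q_tf(x)-f(x)}{t}\ge-(|\nabla^-f|(x)+\varepsilon)\tfrac{d(x,\bar y_t)}{t}+\alpha\bigl(\tfrac{d(x,\bar y_t)}{t}\bigr)\ge-\alpha^*(|\nabla^-f|(x)+\varepsilon)$, and letting $\varepsilon\to0$ (using continuity of $\alpha^*$ at $|\nabla^-f|(x)<\ell$) finishes.

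\emph{Item (4)} is the delicate one, and the main obstacle is securing a bound on $(f-Q_tf)/t$ that is uniform in $x$ for small $t$; this comes again from item (1). If $\mathrm{Lip}(f,r)<\infty$ then, exactly as in item (3), $d(x,\bar y_t)\to0$ uniformly in $x$, so for small $t$ one has $d(x,\bar y_t)\le r$ and $t\alpha(d(x,\bar y_t)/t)\le\mathrm{Lip}(f,r)\,d(x,\bar y_t)$, which forces $d(x,\bar y_t)/t\le u_0$ for a fixed $u_0$; hence $0\le(f-Q_tf)/t\le\mathrm{Lip}(f,r)\,u_0$ uniformly in $x$. Consequently $g_t:=\tfrac{f-Q_tf}{t}\,\varphi(t,\cdot)$ is bounded by a constant, so the reversed Fatou lemma (valid since $\mu$ is a probability measure) gives $\limsup_{t\to0}\int g_t\,d\mu\le\int\limsup_{t\to0}g_t\,d\mu$. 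Pointwise, since $\tfrac{f-Q_tf}{t}\ge0$ is bounded and $\varphi(t,x)\to\psi(x)$, one has $\limsup_{t\to0}g_t(x)=\psi(x)\limsup_{t\to0}\tfrac{f(x)-Q_tf(x)}{t}\le\psi(x)\,\alpha^*(|\nabla^-f|(x))$ by item (3) (for $\psi(x)\ge0$, which is the case in all our applications), and assembling these gives the assertion. The case $\alpha(h)/h\to\ell<\infty$, $\mathrm{Lip}(f)<\ell$, is treated identically with $\mathrm{Lip}(f)$ in place of $\mathrm{Lip}(f,r)$.
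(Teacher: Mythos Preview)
Your argument is correct and, for items (1), (3) and (4), follows essentially the same path as the paper. Two points deserve comment.

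\textbf{Item (2).} Here you take a genuinely different route. The paper proceeds by a direct one-line estimate: since $t\mapsto Q_tf(x)$ is non-increasing, one picks $\bar y\in m(t+h,x)$ and bounds
\[
Q_tf(x)-Q_{t+h}f(x)\le t\alpha\!\left(\tfrac{d(x,\bar y)}{t}\right)-(t+h)\alpha\!\left(\tfrac{d(x,\bar y)}{t+h}\right),
\]
then appeals to the mean value theorem and item (1). Your approach instead feeds $-f$ into Theorem~\ref{HJ 1} to obtain the exact one-sided derivatives $\tfrac{d}{ds_\pm}Q_sf(x)=-\beta(d(x,\bar y)/s)$, bounds these uniformly on $[t,t+h]$ via item (1), and then invokes the standard fact that a continuous function with Dini derivatives bounded by $M$ is $M$-Lipschitz. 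This is slightly less self-contained (it imports Theorem~\ref{HJ 1}, whose proof is independent, so there is no circularity), but it has the virtue of delivering the stated constant $\beta(\alpha^{-1}(\mathrm{Osc}(f)/t))$ cleanly, since the bound $d(x,\bar y)/s\le\alpha^{-1}(\mathrm{Osc}(f)/s)\le\alpha^{-1}(\mathrm{Osc}(f)/t)$ is immediate for $s\ge t$.

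\textbf{Item (4).} You are right to flag the sign of $\psi$. The identity $\limsup_{t\to0}\bigl(\tfrac{f-Q_tf}{t}\varphi(t,\cdot)\bigr)=\psi\cdot\limsup_{t\to0}\tfrac{f-Q_tf}{t}$ is only valid when $\psi\ge0$; for $\psi<0$ the left side equals $\psi\cdot\liminf_{t\to0}\tfrac{f-Q_tf}{t}$, and item (3) gives no lower bound on this $\liminf$. The paper's proof applies reversed Fatou in the same way and does not address this point either. Since in every application of the proposition the weight $\varphi$ is nonnegative (it arises as $k(t)e^{k(t)Q_tf}$ or similar), your caveat ``which is the case in all our applications'' is exactly the right disclaimer, and your proof is at least as careful as the paper's on this score.
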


\proof[Proof of Proposition \ref{prop Q_{t}}]  
(1) Let $M=\sup (f)$ and $m=\inf(f)$. If $\bar{y}\in m(t,x)$, it holds
$$
m+t\alpha\left(\frac{d(x,\bar{y})}t\right)
\leq 
f(\bar{y})+t\alpha\left(\frac{d(x,\bar{y})}t\right)=Q_{t}f(x)\leq M,$$
which proves the first claim.

\noindent (2) Since $t\mapsto Q_{t}f(x)$ is non-increasing, $|Q_{t+h}f(x)-Q_{t}f(x)|=Q_{t}f(x)-Q_{t+h}f(x).$ If $\bar{y}\in m(t+h,x)$, then
\begin{align*}
\frac 1h \left(Q_{t}f(x)-Q_{t+h}f(x)\right) & \leq \frac 1h \left(t\alpha\left(\frac{d(x,\bar{y})}t\right)-(t+h)\alpha\left(\frac{d(x,\bar{y})}{t+h}\right)\right)
\leq \beta\left(\alpha^{-1}\left({\mathrm{Osc}(f)}/t\right)\right),
\end{align*}
where the last inequality comes from  the mean value theorem, the monotonicity of the function $\beta$ and point (1).

\noindent(3) Let us first assume that $\lim_{h\rightarrow \infty}\alpha(h)/h=+\infty$. In this case, $\lim_{t\rightarrow 0}t\alpha^{-1}\left(\frac{\mathrm{Osc}(f)}t\right)=0$ and so, according to the first point,
$$
\inf_{y\in B\left(x, t\alpha^{-1}\left({\mathrm{Osc}(f)}/t\right)\right)} \{f(y)\}\leq Q_{t}f(x)\leq f(x).
$$
Since $f$ is lower semicontinuous, the limit when $t$ goes to $0$ of the left hand side is greater than or equal to $f(x)$. This guarantees that $\lim_{t\to 0^+} Q_{t}f(x)= f(x)$.
Moreover, for all $\bar y_t\in m(t,x)$, $f(\bar y_t)\leq f(x)$ and therefore
\begin{align} \label{mamamia}
\frac {f(x)-Q_{t}f(x)}t
&=
\frac{f(x)-f(\bar y_t)}{t} -\alpha\left(\frac{d(x,\bar y_t)}t\right)
=
\frac{[f(\bar y_t)-f(x)]_-}{d(x,\bar y_t)}\,\frac{d(x,\bar y_t)}t -\alpha\left(\frac{d(x,\bar y_t)}t\right)
\nonumber \\
&\leq \alpha^*\left(\frac{[f(\bar y_t)-f(x)]_-}{d(x,\bar y_t)}\right).
\end{align}
Arguing as before, we see that $\bar y_t\to x$ as $t\to 0$ so that 
$$
\limsup_{t\to 0^+} \frac {f(x)-Q_{t}f(x)}t\leq \alpha^*\left(|\nabla^-f|(x)\right).
$$
Now let us assume that $\alpha(h)/h\to\ell \in \R^+$ when $h\to\infty$. 
According to what precedes, it is enough to show that there is a constant $r>0$ such that
$$
m(t,x)\subset B(x; rt),\qquad \forall t>0,x\in X.
$$ 
Let $\bar{y} \in m(t,x)$. Then it holds $f(\bar{y})-f(x)+t\alpha\left(d(x,\bar{y})/t\right)\leq 0.$ Since $f$ is assumed to be Lipschitz, we conclude that $\mathrm{Lip}(f) d(x,\bar{y})/t\geq \alpha\left( d(x,\bar{y})/t\right).$ Since $\mathrm{Lip} (f)<\ell = \lim_{h\to+\infty} \alpha(h)/h$, this implies that $d(x,\bar{y})\leq rt$ where $r=\sup\{h : \alpha(h)/h \leq \mathrm{Lip}(f)\}<+\infty$, which proves the claim.

\noindent (4) We already know, by point (3), that $\limsup_{t\to 0^+} \frac {f(x)-Q_{t}f(x)}t\leq \alpha^*\left(|\nabla^-f|(x)\right)$. Hence the result of point (4) will follow from Fatou's Lemma (in its limsup version) as soon as for some $t_0>0$, it holds $\sup_{x} \sup_{t \in (0,t_o)}\frac {f(x)-Q_{t}f(x)}t < \infty$.

Assume first that $\lim_{h \to \infty} \alpha(h)/h=\infty$ and let $r>0$ be such that $\mathrm{Lip}(f,r)< \infty$. Observe that $\lim_{t\rightarrow 0}t\alpha^{-1}\left(\frac{\mathrm{Osc}(f)}t\right)=0$ so that, by point (1), 
there exists $t_o>0$ such that, for all $t \in (0,t_o)$, all $x\in X$ and all $\bar y_t\in m(t,x)$,
$d(x,\bar y_t) \leq r$. Using \eqref{mamamia}, we conclude that
$\sup_{x} \sup_{t \in (0,t_o)}\frac {f(x)-Q_{t}f(x)}t \leq \alpha^*\left(\mathrm{Lip}(f,r)\right) < \infty$.

Assume now that $\alpha(h)/h\to\ell\in \R^+$, when $h\to\infty.$ Then, since $\mathrm{Lip}(f) < \ell$, \eqref{mamamia} implies that
$\sup_{x,t}\frac{f(x)-Q_{t}f(x)}t \leq \alpha^*\left(\mathrm{Lip}(f)\right) < \infty$.
This ends the proof of point (4) and of the proposition.
\endproof

\proof[Proof of Proposition \ref{deriv}] We will prove that $H$ is right differentiable, the proof of the left-differentiability being similar.
By formally differentiating under the sign integral yields for all $t>0$,
\begin{align}\label{dif int}
\frac{dH}{dt_+ }(t)
& =
-\frac{k'(t)}{k(t)^2}\log\left(\int e^{k(t)Q_{t}f}\,d\mu \right) \nonumber\\
& \quad 
+ \frac{1}{k(t)\int e^{k(t)Q_{t}f}\,d\mu} 
\left[\int  k'(t)Q_{t}fe^{k(t)Q_{t}f}\,d\mu  + \int k(t)\frac{d}{dt_+ }Q_{t}fe^{k(t)Q_{t}f}\,d\mu\right],
\end{align}
which easily gives the desired identity. Hence, it remains  to  justify the above calculation.
Define $F(t)=\int e^{k(t) Q_{t}f }\,d\mu.$ To obtain \eqref{dif int}, it is enough to show that $F$ is right differentiable and that
$$
\frac{dF}{dt_+ } (t)=\int k'(t)Q_{t}f e^{k(t)Q_{t}f}\,d\mu + \int k(t) \frac{d}{dt_+ }Q_{t}f e^{k(t) Q_{t}f}\,d\mu.$$
For all $s>0$, 
$\frac{1}{s}\left(F(t+s)-F(t)\right)= \int G_{s} \,d\mu,$
with $G_{s}=\frac{1}{s}\left(e^{k(t+s)Q_{t+s}f} -e^{k(t)Q_{t}f}\right).$
Since $t\mapsto Q_{t}f(x)$ is right differentiable for $t>0$, 
$$
G_{s}(x) \underset{s\to 0}{\longrightarrow} k'(t)Q_{t}f(x) e^{k(t)Q_{t}f(x)} + k(t) \frac{d}{dt_+ }Q_{t}f(x) e^{k(t)Q_{t}f(x)} .
$$
For a given $t\in (a,b)$, let $\eta_{t}>0$ be any number such that $t+\eta_{t}<b$. Then, using the mean value Theorem together with point (2) of Proposition   \ref{prop Q_{t}}, it is not difficult to prove that  $\sup_{x\in X}\sup_{s\leq \eta_{t}} |G_{s}| (x)<+\infty.$
%
Applying the dominated convergence theorem completes the proof.
\endproof

\begin{proof}[Proof of Lemma \ref{Falpha}]
Let $f:X\to \R$ be a bounded and continuous function. Fix $t>0$.

\noindent
(1) 
First, following \cite[Lemma 3.8]{GRS12}, we will prove that there exists $r>0$ such that $\mathrm{Lip}(Q_t f,r) < \infty.$
Set $r=t \alpha^{-1}(\mathrm{Osc}(f)/t)$. From point (1) of Proposition \ref{prop Q_{t}}, it holds
$$Q_tf(u)=\inf_{d(y,u)\leq r} \left\{f(y)+t\alpha(d(u,y)/t)\right\}, \qquad \forall u \in X.$$
Fix $u,v \in X$ with $d(u,v) \leq r$. Then, given $y_o \in X$ such that $d(v,y_o) \leq r$, it follows from the mean value theorem that
\begin{align*}
|t\alpha(d(u,y_o)/t)-t\alpha(d(v,y_o)/t)| 
& \leq  
|d(v,y_o)-d(u,y_o)|\max_{s\in [0,1]} \alpha'([sd(u,y_o)+(1-s)d(v,y_o)]/t)\\
&\leq 
\alpha'(2r/t)d(u,v) .
\end{align*}
Now, let $y_o$ be such that $Q_tf(v) = f(y_o)+t\alpha(d(v,y_o)/t)$ and observe that, thanks to the previous observation,
$d(v,y_0) \leq r$. It follows that (choosing $y=y_o$),
\begin{align*}
Q_t f(u) - Q_t f(v) 
& =
\inf_{y} \left\{f(y)+t\alpha(d(u,y)/t)\right\}  - f(y_o) -t \alpha(d(v,y_o)/t) \\
& \leq
t \alpha(d(u,y_o)/t) - t \alpha(d(v,y_o)/t) \\
& \leq 
\alpha'(2r/t)d(u,v) ,
\end{align*} 
which proves that $\mathrm{Lip}(Q_t f, r)< \infty$.

Now assume that $\alpha(h)/h\to\ell \in \R^+$, when $h\to \infty$ and let us prove that $Q_{t}f$ is $\ell$-Lipschitz. The convexity of $\alpha$ implies that
$$\frac{\alpha(h)}{h}\leq \alpha'(h)\leq \frac{\alpha(2h)-\alpha(h)}{h},\qquad \forall h>0.$$
So $\sup_h \alpha'(h) = \lim_{h\to \infty} \alpha'(h)=\ell$ and it follows that $Q_{t}f$ is $\ell$-Lipschitz as an infimum of $\ell$-Lipschitz functions.


%


\smallskip
\noindent (2)  
Let $(\lambda_n)_{n \geq 0}$ be a sequence of real numbers converging to $1$.
For any $x \in X$, let $m(t,x)$ be the set of points $y \in X$ such that
$Q_tf(x)=\inf_{z \in X} \{f(z)+t\alpha(d(x,z)/t)\}=f(y)+t\alpha(d(x,y)/t)$.
For any $n$, let $y_n$ be such that $Q_t (\lambda_n f)(x)= \lambda_n f(y_n) + t\alpha(d(x,y_n)/t)$.
We have, for all $z \in X$,
$$
\lambda_n \inf f + t\alpha(d(x,y_n)/t) \leq \lambda_n f(y_n) + t\alpha(d(x,y_n)/t) \leq
\lambda_n  f(z) + t\alpha(d(x,z)/t) .
$$
Since $(\lambda_n)_n$ converges, we deduce that the sequence $(y_n)_n$ is bounded. Let
$y$ be a limit point of a converging subsequence of $(y_n)_n$. Passing to the limit in the latter leads to
$$
f(y) + t\alpha(d(x,y)/t) \leq
f(z) + t\alpha(d(x,z)/t) \qquad \forall z \in X .
$$
Hence, $y \in m(t,x)$. In turn, after easy considerations left to the reader, $Q_t (\lambda_n f)(x)\to Q_t f(x)$, when $n\to\infty$ as expected. The conclusion of point (2) follows and the proof is complete.
\end{proof}

\bibliographystyle{plain}

\begin{thebibliography}{10}

\bibitem{AGS12}
L.~{Ambrosio}, N.~{Gigli}, and G.~{Savar{\'e}}.
\newblock {Calculus and heat flow in metric measure spaces and applications to
  spaces with Ricci bounds from below}.
\newblock {\em ArXiv:1106.2090v3}, June 2011.

\bibitem{AGS12bis}
L.~{Ambrosio}, N.~{Gigli}, and G.~{Savar{\'e}}.
\newblock Density of lipschitz functions and equivalence of weak gradients in
  metric measure spaces.
\newblock Preprint., 2012.

\bibitem{BEHM09}
Z.~Balogh, A.~Engoulatov, L.~Hunziker, and O.~E. Maasalo.
\newblock Functional inequalities and {H}amilton-{J}acobi equations in geodesic
  spaces.
\newblock Preprint. Available on the ArXiv http://arxiv.org/abs/0906.0476,
  2009.

\bibitem{BGL01}
S.~G. Bobkov, I.~Gentil, and M.~Ledoux.
\newblock Hypercontractivity of {H}amilton-{J}acobi equations.
\newblock {\em J. Math. Pures Appl. (9)}, 80(7):669--696, 2001.

\bibitem{BG99}
S.~G. Bobkov and F.~G{\"o}tze.
\newblock Exponential integrability and transportation cost related to
  logarithmic {S}obolev inequalities.
\newblock {\em J. Funct. Anal.}, 163(1):1--28, 1999.

\bibitem{Evans-book}
L.~C. Evans.
\newblock {\em Partial differential equations}, volume~19 of {\em Graduate
  Studies in Mathematics}.
\newblock American Mathematical Society, Providence, RI, 1998.

\bibitem{GGM05}
I.~Gentil, A.~Guillin, and L.~Miclo.
\newblock Modified logarithmic {S}obolev inequalities and transportation
  inequalities.
\newblock {\em Probab. Theory Related Fields}, 133(3):409--436, 2005.

\bibitem{GL12}
N.~Gigli and M.~Ledoux.
\newblock From log sobolev to talagrand: a quick proof.
\newblock Submitted, 2011.

\bibitem{Go09}
N.~Gozlan.
\newblock A characterization of dimension free concentration in terms of
  transport inequalities.
\newblock {\em Ann. Probab.}, 37(6):2480--2498, 2009.

\bibitem{GRS11}
N.~Gozlan, C.~Roberto, and P.M. Samson.
\newblock A new characterization of {T}alagrand's transport-entropy
  inequalities and applications.
\newblock {\em Annals of Probability}, 39(3):857--880, 2011.

\bibitem{GRS12}
N.~Gozlan, C.~Roberto, and P.M. Samson.
\newblock Characterization of {T}alagrand's transport-entropy inequalities in
  metric spaces.
\newblock To appear in Annals of Probability, 2012.

\bibitem{Gr75}
L.~Gross.
\newblock Logarithmic {S}obolev inequalities.
\newblock {\em Amer. J. Math.}, 97(4):1061--1083, 1975.

\bibitem{LV07}
J.~Lott and C.~Villani.
\newblock Hamilton-{J}acobi semigroup on length spaces and applications.
\newblock {\em J. Math. Pures Appl. (9)}, 88(3):219--229, 2007.

\bibitem{Ma86}
K.~Marton.
\newblock A simple proof of the blowing-up lemma.
\newblock {\em IEEE Trans. Inform. Theory}, 32(3):445--446, 1986.

\bibitem{OV00}
F.~Otto and C.~Villani.
\newblock Generalization of an inequality by {T}alagrand and links with the
  logarithmic {S}obolev inequality.
\newblock {\em J. Funct. Anal.}, 173(2):361--400, 2000.

\bibitem{RR-book}
S.~T. Rachev and L.~R{\"u}schendorf.
\newblock {\em Mass transportation problems. {V}ol. {I}}.
\newblock Probability and its Applications (New York). Springer-Verlag, New
  York, 1998.
\newblock Theory.

\bibitem{RR}
M.~M. Rao and Z.~D. Ren.
\newblock {\em Theory of {O}rlicz spaces}.
\newblock Marcel {D}ekker {I}nc., 1991.

\bibitem{Rock-book}
R.~T. Rockafellar.
\newblock {\em Convex analysis}.
\newblock Princeton Landmarks in Mathematics. Princeton University Press,
  Princeton, NJ, 1997.
\newblock Reprint of the 1970 original, Princeton Paperbacks.

\bibitem{Ta96}
M.~Talagrand.
\newblock Transportation cost for {G}aussian and other product measures.
\newblock {\em Geom. Funct. Anal.}, 6(3):587--600, 1996.

\bibitem{Villani-book}
C.~Villani.
\newblock {\em Optimal transport}, volume 338 of {\em Grundlehren der
  Mathematischen Wissenschaften [Fundamental Principles of Mathematical
  Sciences]}.
\newblock Springer-Verlag, Berlin, 2009.
\newblock Old and new.

\bibitem{W04}
F.~Y. Wang.
\newblock Probability distance inequalities on {R}iemannian manifolds and path
  spaces.
\newblock {\em J. Funct. Anal.}, 206(1):167--190, 2004.

\end{thebibliography}

\end{document}